\newenvironment{customlemma}[1]
  {\innercustomthm}
  {\endinnercustomthm}
\theoremstyle{plain}
\newtheorem{theorem}{Theorem}[section]
\newtheorem{lemma}[theorem]{Lemma}
\newtheorem{claim}[theorem]{Claim}
\newtheorem*{claim*}{Claim}
\theoremstyle{definition}
\newtheorem{definition}[theorem]{Definition}
\newcommand{\eps}{\ensuremath{\varepsilon}}
\newcommand{\Gnp}{\ensuremath{G_{n,p}}}
\newcommand{\calB}{\ensuremath{\mathcal B}}
\newcommand{\calD}{\ensuremath{\mathcal D}}
\newcommand{\calE}{\ensuremath{\mathcal E}}
\newcommand{\calH}{\ensuremath{\mathcal H}}
\newcommand{\calI}{\ensuremath{\mathcal I}}
\newcommand{\calL}{\ensuremath{\mathcal L}}
\newcommand{\calP}{\ensuremath{\mathcal P}}
\newcommand{\calT}{\ensuremath{\mathcal T}}
\date{}
\title{\vspace{-0.7cm}Spanning universality in random graphs}
\author{
	Asaf Ferber	
		\thanks{
			Department of Applied Mathematics, MIT, USA. Email: {\tt ferbera@mit.edu}. Research is partially supported by an NSF grant 6935855.
		}	
	\and
	Rajko Nenadov
		\thanks{
			School of Mathematical Sciences, Monash University, Australia. Email: {\tt rajko.nenadov@monash.edu}.
		}
}
\begin{document}
\maketitle
\begin{abstract}
A graph is said to be $\calH(n, \Delta)$-universal if it contains every graph on $n$ vertices with maximum degree at most $\Delta$. Using a `matching-based' embedding technique introduced by Alon and F\"uredi, Dellamonica, Kohayakawa, R\"odl and Ruci\'nski showed that the random graph $\Gnp$ is asymptotically almost surely $\calH(n, \Delta)$-universal for $p = \tilde \Omega(n^{-1/\Delta})$ --- a threshold for the property that every subset of $\Delta$ vertices has a common neighbour. This bound has become a benchmark in the field and many subsequent results on embedding spanning structures of maximum degree $\Delta$ in random graphs are proven only up to this threshold. We take a step towards overcoming limitations of former techniques by showing that $\Gnp$ is almost surely $\calH(n, \Delta)$-universal for $p = \tilde \Omega(n^{- 1/(\Delta-1/2)})$.
\end{abstract}

\section{Introduction}
Ever since its introduction by Erd\H{o}s and R{\'e}nyi~\cite{ErdosRenyi} in 1960, random graphs have been one of the main objects of
study in probabilistic combinatorics. Given a positive integer $n$
and a real number $p \in [0,1]$, the binomial random graph $\Gnp$ is the random variable taking values in the set
of all labelled graphs on the vertex set $[n]$. We can describe the
probability distribution of $\Gnp$ by saying that each two
elements of $[n]$ form an edge in $\Gnp$ with probability $p$, independently of all other pairs.

The core meta-problem in the area is the study of
the {\em evolution of $G_{n,p}$}, that is analysing how it behaves with respect to certain graph properties as $p$ traverses
the interval $[0,1]$. A result of Bollob\'as and Thomason~\cite{bollobas1987threshold} states that for every non-trivial monotone graph property $\mathcal P$ the random graph undergoes a sudden change from almost surely not having to almost surely having the property $\mathcal{P}$. We are interested in determining when this change happens. In short, we are interested in determining a \emph{threshold function} for $\mathcal P$. Recall that a function $q(n)$ is a threshold function for $\mathcal P$ if
$$
	 \lim_{n \rightarrow \infty} \Pr\left[\Gnp \text{ satisfies } \mathcal P\right] =
	 \begin{cases}
 	 	0&  \text{if } p(n)/q(n) \rightarrow 0,   \\
    	1&  \text{if } p(n)/q(n) \rightarrow \infty.
	 \end{cases}
$$
One of the fundamental properties is related to \emph{subgraph containment}: given a graph $H$, determine the values of $p$ for which a typical\footnote{We say that a typical $\Gnp$ satisfies a property $\mathcal{P}$ if $\lim_{n \rightarrow \infty} \Pr[\Gnp \text{ satisfies } \mathcal P] = 1$.} $\Gnp$ contains a copy of $H$.

Let us first consider the case where $H$ is a graph on a fixed number of vertices, that is of a size which does not depend on $n$. A classical result in the random graph theory states that a threshold for containing a given (fixed) graph $H$ as a subgraph is
$n^{-1/m(H)}$, where $m(H) = \max\{|E(H')|/|V(H')| \colon H' \subseteq H\}$. The case where $H$ is \emph{balanced} (that is, $m(H)=|E(H)|/|V(H)|$) has already been proven in \cite{ErdosRenyi} and the general case was solved by Bollob\'as (see the relevant chapter in \cite{bollobas1998random}).

Unfortunately, for larger graphs (that is, graphs of size that depends on $n$) such a characterisation is not known and most of the current research is focused on understanding specific families of graphs (or specific graphs). % which are `similarly behaved' (or `well behaved' in some sense).
Perhaps the simplest and most natural candidate graph $H$ to start with is a \emph{perfect matching} (that is, a collection of $\lfloor n/2\rfloor$ pairwise disjoint edges). In their original paper, Erd\H{o}s and Renyi \cite{ErdosRenyi} showed that $n^{-1} \log n$ is a threshold for such graph $H$. More than a decade later, by introducing the so-called `rotation-extension' technique, Pos\'a \cite{posa} showed that $n^{-1} \log n$ is also a threshold for the existence of a \emph{Hamilton cycle}, that is, a cycle which passes through every vertex exactly once. Nowadays, much more precise results are known about Hamilton cycles in random graphs and for more details we refer the reader to \cite{bollobas1998random,frieze2015introduction} and references therein. Even though a great deal of effort has been made, there are only a handful of other examples of large graphs for which a threshold is known. A detailed survey of recent progress can be found in \cite{bottcher2017large}. We now briefly mention a few results which have had a significant influence in this area of research and are relevant for our main result.

One of the earliest general results on the existence of large graphs in random graphs is by Alon and F\"uredi \cite{AF}. They showed that a typical $\Gnp$ contains a graph $H$ with $n$ vertices (we call such a graph \emph{spanning}) and maximum degree $\Delta$ provided $p = \tilde\Omega(n^{-1/\Delta})$ (as usual, $\tilde\Omega(\cdot)$ means we hide $\log$ factors). Even though this bound is probably far from the threshold, their proof method served as a basis in much of the subsequent work in the area. We will come back to this point shortly. Arguably the simplest graph to describe -- and the most difficult to prove -- with maximum degree $\Delta$ is a \emph{$K_{\Delta + 1}$-factor}, that is a collection of $\lfloor n / |V(H)| \rfloor$ vertex disjoint copies of complete graphs with $\Delta + 1$ vertices. Following some initial progress by Krivelevich \cite{krivelevich1997triangle} and Kim \cite{kim2003perfect}, Johansson, Kahn and Vu \cite{johansson2008factors} %determined a threshold for the existence of $H$-\emph{factors. In particular, among other things they
showed (among other things) that
\begin{equation}
  \label{eq:JKV bound}
  \left(n^{-1}\log^{1/\Delta}n\right)^{\frac{2}{\Delta+1}}
\end{equation}
is a threshold for the existence of a $K_{\Delta + 1}$-factor. Further progress on determining a threshold for an arbitrary spanning graph $H$ was achieved by Riordan \cite{riordan2000spanning}. In particular, the main result from \cite{riordan2000spanning} shows that $p \ge n^{-2/(\Delta + 1) + \eps(\Delta)}$ suffices for any spanning graph $H$ with maximum degree $\Delta$, for some $\eps(\Delta) > 0$ which goes to $0$ as $\Delta$ goes to infinity. It is believed that in fact $\eps(\Delta) = 0$ suffices, that is the bound in \eqref{eq:JKV bound} determines an upper bound on the threshold for the appearance of any such graph $H$. This is supported by a recent result of Ferber, Luh and Nguyen \cite{ferber2016embedding} where they showed that this is indeed the case if $H$ has at most $(1 - \eps)n$ vertices (we call such graphs \emph{almost-spanning}). Of course, there are graphs for which $p$ can be much lower, such as the empty graph, however the example of a $K_{\Delta+1}$-factor shows that it is the best possible general bound for the family of graphs on at most $n$ vertices and of maximum degree at most $\Delta$. Throughout the paper, we denote this family by $\mathcal H(n,\Delta)$. 

An important subfamily of $\mathcal H(n,\Delta)$ is the family of $d$-\emph{degenerate} graphs. Recall that a graph $H$ is $d$-degenerate if there exists a labelling $V(H)=\{v_1,\ldots,v_t\}$ of its vertices such that each vertex $v_i$ has at most $d$ neighbours within $\{v_1,\ldots, v_{i-1}\}$. It follows again from the result of Riordan~\cite{riordan2000spanning} that $\Gnp$ contains a given $d$-degenerate spanning graph $H$ provided $p=\omega(n^{-1/d})$ and $d \ge 3$ (with a minor restriction on the maximum degree of $H$). As before, there are graphs with a smaller threshold, like the empty graph, but there are also examples for which such a bound on $p$ coincides with a threshold. The $d$-power of a path is one such example. In the case where $d = 2$, the only graph studied so far is the square of a Hamilton path or, more generally, the square of a Hamilton cycle (which is an `almost' $2$-degenerate graph; see \cite{bennett2016square,kuhn2012posa,nenadov17square}). Finally, we come to the most notable case  where $d = 1$. Note that a graph is $1$-degenerate if and only if it is a forest. Some partial results for such graphs (forests) were obtained in \cite{hefetz2012sharp,krivelevich2010embedding} and it was only a recent breakthrough of Montgomery \cite{montgomery2014embedding} that determined $n^{-1} \log n$ to be a threshold. We remark that there is a loss of a few $\log$s in \cite{montgomery2014embedding} but the author has recently announced an optimal bound. Moreover, Montgomery has actually announced a stronger statement -- rather than just containing one such forest, the random graph contains all of them simultaneously. In other words, a typical $\Gnp$ is \emph{universal} for such a family of graphs. The notion of universality is the main topic of this paper.

Given a family of graphs $\mathcal H$, a graph $G$ is \emph{universal} for $\mathcal H$
(or simply $\mathcal H$-universal) if it contains a copy of every graph $H \in \mathcal H$. Once we know that $\Gnp$ contains any given graph $H \in \calH$ with high probability for some $p$, the question which naturally follows is whether it contains all of them \emph{simultaneously}. In other words, we are interested in determining a threshold for the property of `being $\calH$-universal'. %A natural problem in random graphs is to determine the threshold behaviour of the property `being $\mathcal H$-universal' with respect to various families of graphs.

A family that has received considerable attention in recent years is $\mathcal T(n,\Delta)$, the family of all spanning forests with maximum degree at most $\Delta$. Progress towards determining a threshold for $\calT(n, \Delta)$-universality was done in \cite{ferber2015universality,johannsen2013expanders} and Montgomery \cite{montgomery2014embedding} has recently proved that a typical $\Gnp$ is $\mathcal T(n,\Delta)$-universal provided $p=\Omega(\log^c n/n)$ (and as mentioned above, he has also announced on such a result for $p=\Theta(\log n/n)$). Montgomery's proof relies on the following `simple' structure of trees, observed by Krivelevich \cite{krivelevich2010embedding}: %Montgomery used the following simple structure of the members of $\mathcal T(n,\Delta)$ -
each $T\in \mathcal T(n,\Delta)$ either has many \emph{leaves} (vertices of degree $1$) or many \emph{long} induced paths. We remark that the corresponding almost-spanning case was solved earlier in \cite{alon2007embedding,balogh2010large}.

As a next step, it is interesting to consider families of more `complicated' graphs such as $d$-degenerate graphs, which are a generalisation of forests. Let us denote by $\calH(n, d, \Delta)$ the family of all $d$-degenerate graphs on at most $n$ vertices and  maximum degree at most $\Delta$ and note that $\calT(n, \Delta) = \calH(n, 1, \Delta)$.  Already for $d \ge 2$ we are not aware of any simple structure which characterises $d$-degenerate graphs, such as the one described in the case of forests/trees. Consequently, the corresponding universality problem is still far from being settled. As mentioned earlier, for $d \ge 3$ Riordan's result shows that $\Gnp$ contains one such graph provided $p = \omega(n^{-1/d})$. However, as his proof is based on a second-moment argument it does not translate into a universality statement (the bounds on the probability of not containing one such graph are too large for a union bound).
%A natural family to consider next is the family of all $d$-degenerate graphs on $n$ vertices and with maximum degree at most $\Delta$, denoted by $\mathcal H(n,d,\Delta)$ (note that $\mathcal T(n,\Delta)\subseteq \mathcal H(n,1,\Delta)$). As mentioned earlier, for a given graph $H\in \mathcal H(n,d,\Delta)$, it follows from Riordan's result that a typical $G_{n,p}$ contains $H$, provided that $p=\Omega(n^{-1/d})$.
The current best bound is $p=\tilde\Omega(n^{-1/(2d+1)})$ by Allen et al. \cite{allen2016blow}, obtained as a corollary of a general \emph{sparse blow-up lemma} that they have developed. As a warm up for our main result, we slightly improve the bound of Allen et al. by proving the following:% and prove the following:

\begin{theorem} \label{thm:spanning_deg}
	Let $d\leq \Delta$ be positive integers and let
	$$
		p \ge \left( n^{-1}\log^3 n \right)^{\frac{1}{2d}}.
	$$
	Then $G_{n,p}$ is w.h.p\footnote{With high probability, i.e. with probability tending to $1$ as $n \to \infty$.}  $\calH(n, d, \Delta)$-universal.
\end{theorem}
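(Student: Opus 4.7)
The plan is to adapt the Alon--F\"uredi matching-based embedding technique to the universality setting. Given $H\in\calH(n,d,\Delta)$ with a $d$-degeneracy ordering $v_1,\ldots,v_n$, write $N^-(v_i)$ for the backward neighbourhood of $v_i$ (of size at most $d$). Split $V(H)$ into a large ``main'' block $M$ and a short ``reserve'' block $R$ of size $m\approx n/\log^2n$, and partition $V(\Gnp)=U_M\cupdot U_R$ randomly with $|U_R|$ slightly larger than $|R|$. The embedding is built in two phases: a greedy mapping of $M$ into $U_M$ following the degeneracy order, and a completion of $R$ into $U_R$ via a perfect matching in a bipartite auxiliary graph.

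The strategy is to isolate a deterministic pseudorandomness event $\calQ$ that $\Gnp$ satisfies w.h.p., such that every $G\in\calQ$ is already $\calH(n,d,\Delta)$-universal. Two properties go into $\calQ$: (i) common-neighbourhood regularity, $|N_G(A)\cap U|=(1\pm o(1))p^{|A|}|U|$ for every $A\subseteq V(G)$ with $|A|\le d$ and every linear-sized $U\subseteq V(G)$; and (ii) an expansion statement saying that, for every family $\calF$ of distinct subsets of $V(G)$ of size $\le d$ in which each vertex lies in at most $\Delta$ sets and $|\calF|\ge\log^2 n/p^d$, the number of vertices of $V(G)$ avoided by $\bigcup_{A\in\calF}N_G(A)$ is at most $\exp\bigl(-\Omega(p^d|\calF|/\Delta)\bigr)\cdot n$. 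Property (i) is routine Chernoff with a union bound over the $n^{O(d)}$ candidate sets. Property (ii) is the technical heart: one extracts greedily a pairwise disjoint sub-family of $\calF$ of size $\Omega(|\calF|/\Delta)$, over which the miss events at a fixed vertex become independent and the probability factorises as $(1-p^d)^{\Omega(|\calF|/\Delta)}$, and a further Chernoff step concentrates the total count. A careful union bound over families $\calF$ arising from admissible embeddings then yields (ii) simultaneously.

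Conditioned on $\calQ$, the greedy phase is automatic: when placing $v_i\in M$, the common neighbourhood of the already-embedded $\phi(N^-(v_i))$ inside the free part of $U_M$ has size at least $\tfrac14 p^d|U_M|$ by (i), which (since $p^dn\ge n^{1/2}\log^{3/2}n$) far exceeds the number of used vertices, so a free host always exists. For the matching phase, form the bipartite graph $B$ on $R\cupdot U_R$ with $v\sim u$ iff $u\in N_G(\phi(N^-(v)))$ and verify Hall's condition for each $S\subseteq R$. When $|S|\le p^dm/2$ a single-vertex degree bound from (i) already gives $|N_B(S)|\ge|S|$. When $|S|>p^dm/2$ apply (ii) to $\calF=\{\phi(N^-(v)):v\in S\}$: its maximum set-degree is at most $\Delta$ because every host vertex of $\phi(M)$ is the image of a single vertex of $M$, which is a backward neighbour of at most $\Delta$ vertices of $R$. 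This yields $|U_R\setminus N_B(S)|\le\exp\bigl(-\Omega(p^{2d}m/\Delta)\bigr)\cdot|U_R|\le|U_R|-|S|$ under the hypothesis, so Hall's condition holds and the matching exists.

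The main obstacle, and the origin of the exponent $1/(2d)$, is property (ii). After the disjointification step the per-vertex miss probability decays like $(1-p^d)^{|\calF|/(d\Delta+1)}$, and one needs this to beat the union-bound cost over families of size $|\calF|\gtrsim p^dm$. Tracing through, the binding case is $|\calF|\approx p^dm$, which forces $p^{2d}m=\Omega(\log n)$, equivalently $p^{2d}n=\Omega(\log^3 n)$ since $m=n/\log^2 n$. In one sentence: the reserve-stage matching is controlled by the typical pairwise intersection of two candidate sets, which has size $\approx p^{2d}n$, and this is precisely what gives the threshold $p\ge(n^{-1}\log^3n)^{1/(2d)}$.
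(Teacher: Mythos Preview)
Your greedy phase breaks down at this value of $p$. With $p = (n^{-1}\log^3 n)^{1/(2d)}$ the common neighbourhood of any $d$-set has only $p^d n = \Theta(n^{1/2}\log^{3/2} n)$ vertices in all of $U_M$, whereas the number of already-used vertices grows to $|M| \approx n$; the sentence ``$p^d n \ge n^{1/2}\log^{3/2}n$ far exceeds the number of used vertices'' is false as soon as more than $n^{1/2+o(1)}$ vertices have been placed, i.e.\ almost immediately. Nor can property~(i) be strengthened to hold for every linear-sized $U$: the Chernoff error is $\exp(-\Theta(p^d|U|)) = \exp(-\Theta(n^{1/2}))$, which does not survive a union bound over the $2^{\Theta(n)}$ possible (adaptively determined) free parts. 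This obstacle is exactly what the paper's Lemma~\ref{lemma:S_almost_spanning} is designed to overcome: it fixes a chain of small reserve sets $W_1,\ldots,W_k$ \emph{before} revealing $\Gnp$, tries to place each $h_i$ first in the main set $W_0$, and only on failure moves to $W_1$, then $W_2$, etc.; the key estimate \eqref{eq:W_j_invariant} shows that the number of vertices forced into $W_j$ shrinks geometrically, so the cascade terminates. Because each $W_j$ is fixed in advance, the required expansion (Lemma~\ref{lemma:expansion}) only needs a union bound over families $\calL$, not over host subsets.

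Your finishing step has a separate structural gap. Taking $R$ to be the tail of the degeneracy order does not make $R$ independent, so a matching built from the sets $\phi(N^-(v))$ ignores edges inside $R$; and if you instead choose $R$ independent, then for $v\in R$ one must match against $\phi(N_H(v))$, which can have size up to $\Delta$, not $d$. The paper resolves both issues by reserving a set $D$ of vertices of \emph{total} degree at most $2d$ that are pairwise far apart (so $D$ is independent with disjoint neighbourhoods), embedding their neighbourhoods \emph{first} as an $F$-matching, then embedding $H\setminus D$ via the cascading lemma using an ordering with back-degree at most $2d$ (at most $d$ neighbours into the pre-embedded set $S$ by property (D4), and at most $d$ further back-neighbours by $d$-degeneracy), and only then completing $D$ by the Hall-type matching. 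The exponent $1/(2d)$ therefore comes from the degree of the reserved vertices --- which is governed by the average degree $2d$ of a $d$-degenerate graph --- rather than from a pairwise intersection of two candidate $d$-sets.
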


A heuristic argument for why our bound is a natural one to start with can be explained as follows: Consider a $d$-degenerate graph $H$ on $n$ vertices. It follows from the definition of being $d$-degenerate that $e(H) \leq dn$ (and there are plenty of such graphs for which this bound is almost tight -- that is we have $e(H) \ge dn - o(n)$). The average degree in any such graph is $2d - o(1)$ and in fact it can happen that all but a few vertices have degree $2d$ (for example, the $d$-power of a path). In order to embed such a spanning graph using the current techniques -- which are mainly based on a `vertex-by-vertex' embedding scheme -- at some point one should connect a vertex to an already embedded neighbourhood of size at least $2d$. In order to find such a vertex we clearly need $n p^{2d} \ge 1$, contributing the term $1/2d$ in the exponent. We remark that in the almost-spanning case a recent result of Conlon and Nenadov \cite{conlon17size} asserts that $p = \tilde\Omega(n^{-1/d})$ suffices. As remarked before, such a bound is optimal up to the logarithmic factor as can easily be seen by calculating the expected number of copies of the $d$-power of the path of length $(1 - \eps)n$.

%The last family we would like to introduce is $\mathcal H(n,\Delta)$, . Even though this family was initially studied in the '90's (see e.g. Alon and F\"uredi \cite{alon1992spanning}), motivated by Montgomery's result this family has started gaining a growing amount of attention.

Finally, we consider the family $\calH(n, \Delta)$ consisting of all graphs on $n$ vertices with maximum degree at most $\Delta$. Recall that \eqref{eq:JKV bound} establishes a threshold for containing a $K_{\Delta+1}$-factor, thus one cannot hope for a universality result in $G_{n,p}$ with $p=o\left(n^{-2/(\Delta + 1)}\right)$. It is a common belief that the bound in \eqref{eq:JKV bound} is actually the correct one but the best known results are far away from it. %The first known result for general graphs from $\mathcal H(n,\Delta)$ is due to Alon and F\"uredi \cite{AF} from 1992. They introduced an embedding technique based on matchings, and showed that for a given
%graph $H$ on $n$ vertices and with maximum degree $\Delta(H)=\Delta$, a
%typical $G_{n,p}$ contains a copy of $H$, provided that
%$p=\omega\left(n^{-1/\Delta}(\log n)^{1/\Delta}\right)$.
Using an argument based on the ideas of Alon and F\"uredi \cite{AF}, Alon et al. \cite{alon2000universality} showed that $p = \Tilde\Omega(n^{-1/\Delta})$ suffices for a typical $\Gnp$ to be $\calH((1 - \eps)n, \Delta)$-universal. This was subsequently extended to spanning graphs by Dellamonica, Kohayakawa, R\"odl and Ruci\'nski \cite{dellamonica2015improved} ($\Delta \ge 3$) and Kim and Lee \cite{kim2014universality} ($\Delta = 2$). The bound on $p$ has been slightly improved by Ferber, Nenadov and Peter \cite{ferber2015universality} for the subfamily consisting of all such graphs which are not `locally dense' (which, among other graphs, contains forests). %Regarding the problem of embedding a single member $H\in \mathcal H(n,\Delta)$, by a clever use of the second moment method, Riordan showed in \cite{riordan2000spanning} that a bound $n^{-2/(\Delta+1)+\varepsilon(\Delta)}$, where $\varepsilon(\Delta)$ goes to $0$ as $\Delta$ tends to infinity, is sufficient.

Note that the value of $p$ in the above mentioned universality results comes naturally -- in this range a typical $\Gnp$ has the property that every subset of  $\Delta$ vertices has a non-empty common neighbourhood. Therefore, at least intuitively, one can expect to find a copy of any graph with maximum degree $\Delta$ using `vertex-by-vertex' embedding. This bound has become a benchmark in the field and much subsequent work on embedding spanning or almost-spanning graphs of maximum degree $\Delta$ in random graphs achieves this threshold. This includes the work of Kohayakawa, R{\"o}dl, Schacht and Szemer{\'e}di \cite{kohayakawa2011sparse} on Ramsey properties of random graphs, the bandwidth theorem for random graphs \cite{allen2015local,bottcher2013almost} and a blow-up lemma by Allen et al. \cite{allen2016blow}. As all these results build on the ideas established for proving the above mentioned universality results, in order to achieve any further progress in these more involved questions we first need to improve our understanding of the universality problem.

To the best of our knowledge, there are only two universality results going beyond $n^{-1/\Delta}$. The first one is of Conlon, Ferber, Nenadov and \v Skori\'c \cite{conlon17almost}, obtaining a bound of $p = \tilde\Omega(n^{-1/(\Delta-1)})$ in the almost spanning case for all $\Delta\geq 3$. Note that this matches \eqref{eq:JKV bound} for $\Delta = 3$, up to the logarithmic factor. The second one is of Ferber, Kronenberg and Luh \cite{ferber2016optimal}, matching \eqref{eq:JKV bound} in the case where $\Delta = 2$ (spanning case). In general, spanning results are known to be much harder to obtain than the corresponding almost spanning version and the problem of breaking the  barrier of $p = \tilde \Omega(n^{-1/\Delta})$ for $\mathcal H(n,\Delta)$-universality remained open for all $\Delta \ge 3$. In our main result we make a first progress in breaking this natural barrier in the spanning case for all $\Delta\ge 3$.

\begin{theorem} \label{thm:spanning}
	Let $\Delta \ge 3$ be an integer and let
	$$
		p \ge \left( n^{-1}\log^3 n\right)^{\frac{1}{\Delta-1/2}}.
	$$
	Then $G_{n,p}$ is w.h.p $\calH(n, \Delta)$-universal.
\end{theorem}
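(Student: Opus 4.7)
The plan is to extend the matching-based embedding framework of Alon--F\"uredi and Dellamonica--Kohayakawa--R\"odl--Ruci\'nski by embedding an adjacent pair of vertices of $H$ simultaneously, rather than vertex-by-vertex. The exponent $\Delta - 1/2$ arises because, when two adjacent vertices $u,u'$ are placed as a single unit, each carries at most $\Delta-1$ previously embedded back-neighbours (the partner inside the unit is excluded) and the embedded pair $(x,y)$ must additionally span an edge of $G_{n,p}$. This yields of order $n^{2}\,p^{\,2(\Delta-1)+1}=n^{2}p^{\,2\Delta-1}$ candidate ordered pairs, a quantity which is $\gg \mathrm{polylog}(n)$ precisely when $p\ge (n^{-1}\log^3 n)^{1/(\Delta-1/2)}$.

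\emph{Structural decomposition of $H$.} For a given $H\in\calH(n,\Delta)$, fix any maximal matching $P\subseteq E(H)$. By maximality, the set of unmatched vertices $I:=V(H)\setminus V(P)$ is independent in $H$, so every $v\in I$ has all of its $H$-neighbours in $V(P)$. Order $V(H)$ in two phases: first the vertices of $I$ as singleton units, then the edges of $P$ as pair-units (in some order). Every singleton $v\in I$ then has back-degree $0$ (its $H$-neighbours all lie in later pair-units), while in every pair-unit $\{u,u'\}$ each of $u$ and $u'$ has back-degree at most $\Delta-1$, since its matching partner sits in the same unit.

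\emph{Pair-matching embedding.} Reserve a random linear-sized set $R\subseteq V(G_{n,p})$, embed $I$ into $V(G_{n,p})\setminus R$ by any procedure which keeps the images ``pseudo-random'' with respect to $R$ (this is possible because the singletons carry no back-constraints), and then embed $P$ by reading off a perfect matching in the following auxiliary bipartite graph $\calB$: on the left, the pairs of $P$; on the right, the edges of $G_{n,p}[R]$; a pair $\{u,u'\}$ is joined to $xy$ iff $x$ is a common $G_{n,p}$-neighbour of the images of all already-embedded back-neighbours of $u$, and symmetrically for $y$ and $u'$. By the degree computation above, the typical left-degree in $\calB$ is of order $n^{2}p^{\,2\Delta-1}$, leaving ample slack to verify a Hall-type condition on $\calB$ using the standard common-neighbourhood expansion of $G_{n,p}$.

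\emph{Main obstacle.} The core technical step is adapting the one-vertex matching expansion lemma of Dellamonica et al.\ to this pair setting, in which the right side of $\calB$ is itself a random edge set rather than a vertex set, and proving that the required Hall-type expansion of $\calB$ holds with failure probability at most $|\calH(n,\Delta)|^{-1}\le n^{-\Delta n}$, so that a union bound over all $H$ goes through. A further subtlety is that the images of the singletons of $I$ must be chosen so that they do not damage the pseudo-randomness on which the pair-matching step relies. The saving of $1/2$ in the denominator is exactly what this two-vertex approach can afford: a further gain would presumably require simultaneously embedding triples or longer substructures, for which the clean structural fact ``unmatched vertices form an independent set'' has no obvious analogue.
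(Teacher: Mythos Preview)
Your intuition for why the exponent $\Delta-1/2$ appears --- embedding an adjacent pair costs $p^{2(\Delta-1)+1}$ --- is exactly right, and the paper does use this idea (see Lemma~\ref{lemma:expansion_edges}). But the proposal has two genuine gaps.

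First, the auxiliary bipartite graph $\calB$ is not well-defined. You want the edges of $P$ on the left and the edges of $G_{n,p}[R]$ on the right, with $\{u,u'\}$ joined to $xy$ if $x$ sees the images of the back-neighbours of $u$. But when two matching edges $\{u,u'\}$ and $\{v,v'\}$ are adjacent in $H$ (say $uv\in E(H)$), the constraint for $\{u,u'\}$ depends on where $\{v,v'\}$ is mapped and vice versa. This is not a bipartite matching problem; it is a constraint-satisfaction problem with entangled pairs. One could instead process pairs sequentially, but then your ``perfect matching in $\calB$'' framing collapses, and in any case you face the second problem below.

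Second, and more seriously, you do not address the spanning issue. After embedding $I$, you must place roughly $|P|\approx n/2$ pairs into $R$ using up \emph{every} vertex. At $p = \tilde\Omega(n^{-1/(\Delta-1/2)})$ a typical $(\Delta-1)$-set has only $n p^{\Delta-1}=n^{1/(2\Delta-1)+o(1)}$ common neighbours, and a typical pair of such sets has only polylogarithmically many connecting edges; there is no slack to absorb the last few pairs, and no way to achieve the $e^{-\Delta n\log n}$ failure probability you ask for by direct expansion estimates. The paper handles this with an entirely different architecture: it partitions $\calH(n,\Delta)$ into finitely many classes $\calD_\Delta(F^*,z^*,K)$ indexed by a small rooted template $(F^*,z^*)$, and for each class builds an \emph{absorbing structure} (Lemma~\ref{lemma:delta_absorber}) based on Montgomery's $Y$-robust bipartite graphs. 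This absorber --- a carefully planted family of copies of $F^*\setminus z^*$ whose $\Gamma$-sets are guaranteed to have common neighbours arranged in a robust pattern --- is fixed before one looks at $H$, so no union bound over $\calH(n,\Delta)$ is needed. The bulk of $H$ is then embedded by an almost-spanning lemma (Lemma~\ref{lemma:delta_S_embedding}, which does internalise your pair-embedding idea via Lemma~\ref{lemma:expansion_edges}), and the absorber soaks up whatever vertices remain. Your proposal is missing this absorbing step, which is the heart of the spanning argument.
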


In the proof of Theorem \ref{thm:spanning} we make use of a recent embedding scheme introduced by Conlon and Nenadov \cite{conlon17size} combined with the ideas of Conlon, Ferber, Nenadov and \v Skori\'c \cite{conlon17almost} and the \emph{absorption method} introduced in \cite{rodl2006dirac}. In particular, we follow an idea of Montgomery \cite{montgomery2014embedding} to use \emph{robust} bipartite graphs to build an \emph{absorbing structure} that will make our `finishing part' of the embedding quite simple.% We leave the comments on the optimality of our result and limits of our approach for the last section.\AF{I think that the last statement is redundant}

The paper is organised as follows: In the next section we state standard results and introduce some notation. In Section \ref{sec:S_embedding} we prove Lemma \ref{lemma:S_almost_spanning} which is the main building block in the proof of Theorem \ref{thm:spanning_deg} (and contains a key idea for the bounded degree case) and in Section \ref{sec:weaker_ordering} we prove a version of this lemma which is tailored for the proof of Theorem \ref{thm:spanning}. In Section \ref{sec:degenerate} we prove the universality result for the family of $d$-degenerate graphs, and finally, in Section \ref{sec:main} we prove Theorem \ref{thm:spanning}. In the last section we make concluding remarks and give directions for further research.

\section{Notation and preliminaries}

In order to make the arguments and calculations easier to follow, we avoid explicit use of floors and ceilings. Thus, for example, if $S$ is a set and $r \in \mathbb{R}$ we write $|S| = r$ to denote $|S| \in \{\lfloor r \rfloor, \lceil r \rceil\}$. To compensate for any errors caused by rounding we make all inequalities to hold with sufficiently large margin.

Given a graph $H$, we write $xy \in H$ as a shorthand for $\{x,y\} \in E(H)$. The size of the vertex and the edge set of $H$ is denoted by $v(H)$ and $e(H)$, respectively. We use $\Delta(H)$ and $\delta(H)$ to denote the maximum and minimum degree of $H$, respectively. Given a vertex $v \in V(H)$ we use $N_H(v)$ to denote its neighbourhood. Given a subset of vertices $W \subseteq V(H)$ we write $N_H(W) := \bigcup_{w \in W} N_H(w)$ to denote the set of vertices which are adjacent to some vertex in $W$. If $H$ is clear from the context we omit it from the subscript. We say that two vertices in $H$ are at \emph{distance $k$} if a shortest path between them is of length at least $k$ (where the length of a path equals to its number of edges).

Given graphs $G$ and $H$, a mapping $\phi \colon V(H) \rightarrow V(G)$ is said to be an \emph{embedding} of $H$ into $G$, with the notation $\phi \colon H \hookrightarrow G$, if $\phi$ is injective and $\phi(h)\phi(h') \in G$ for every $hh' \in H$. Moreover, $\phi \colon H \hookrightarrow G$ is an \emph{isomorphism} if $\phi(h)\phi(h') \in G$ if and only if $hh' \in H$.

\subsection{Auxiliary bipartite graph $\calB_G(\calL, U)$}
\label{sec:aux_bip}

%In this section we define an auxiliary bipartite graph $\calB_G(\calL,U)$ which plays an important role in the finishing part of our proofs. %which will serve us in the proofs of our main results.
Suppose we have already embedded a subgraph $H' \subseteq H$ into a host graph $G$. Let $\phi \colon H' \hookrightarrow G$ denote such an embedding and let $U:=V(G)\setminus \phi(V(H'))$ be the set of \emph{unoccupied} vertices of $G$. Moreover, suppose that the set $I := V(H)\setminus V(H')$ of the remaining vertices is independent in $H$. Now, for each $v \in I$ let $L_v = \phi(N_H(v)) \subseteq \phi(V(H'))$ denote the image of $N_H(v)$ in $G$ (note that $N_H(v)$ is already embedded at this stage). %Next, let $\calL:=\{L_v \colon v\in R\}$ and
Observe that a vertex $v \in I$ can be mapped onto a vertex $u \in U$ in an extension of the current embedding $\phi$ only if $L_v \subseteq N_G(u)$ --- in other words, $u$ needs to be adjacent to all the vertices in $L_v$ (see Figure \ref{fig:extend_phi}).

\begin{figure}[h!]	
	\centering
	\includegraphics{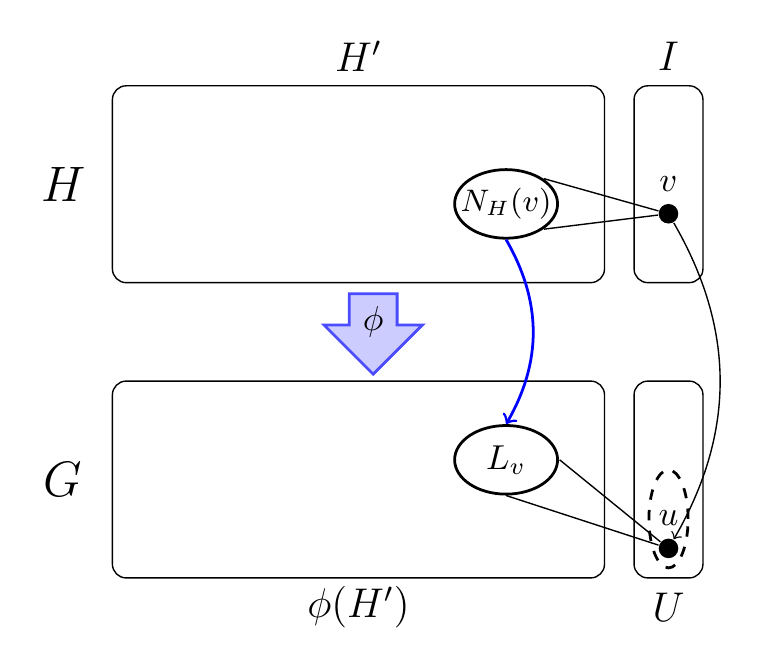}	
	\caption{Dashed subset of vertices of $U$ represent all possible candidates for $v$.}
	\label{fig:extend_phi}
\end{figure}

We define an auxiliary bipartite graph which captures this property:

\begin{definition}[The bipartite graph $\calB_G(\calL, U)$]
Given a graph $G$, a family $\calL$ of subsets of $V(G)$ and a subset $U \subseteq V(G)$, we form the bipartite graph $\calB_G(\calL, U)$ as follows: the vertex set of $\calB(\calL, U)$ consists of $U$ as one part and $\calL$ as the other (that is, each set in $\calL$ represent a single vertex in $\calB_G(\calL, U)$), and the edge set consists of all pairs $L \in \calL$ and $u \in U$ such that $L \subseteq N_G(u)$.
\end{definition}

Observe that whenever $V(H)\setminus V(H')$ is an independent set, an embedding $\phi$ of $H'$ into $G$ can be extended into an embedding of $H$ if and only if $\calB_G(\calL,U)$ contains a \emph{perfect matching}, where $\calL:=\{L_v \colon v\in I\}$ and $U:=V(G)\setminus \phi(V(H'))$. This observation has been used in most of the previous results on embedding spanning graphs. In the proof of Theorem \ref{thm:spanning_deg} the existence of such perfect matching will follow from Hall's criteria (stated in the next section) and expansion properties of random graphs (see Section \ref{sec:expansion_aux}). In the proof of Theorem \ref{thm:spanning} this is significantly more difficult and we resort to the absorbing method.

\subsection{Some results in graph theory}

% As in many results on embedding spanning graphs, Hall's criteria for the existence of perfect matchings plays an important role in the finishing part of our proofs. Suppose we have already embedded a subgraph $H'$ of a desired graph $H$ such that the remaining vertices $U:=V(H)\setminus V(H')$ are at distance at least 3 from each other. In order to complete the embedding into a copy of $H$ we need to embed $U$ into the set of `unoccupied' vertices of the host graph $G$. To do so, we define an auxiliary bipartite graph between the (already embedded) neighbourhoods of vertices in $U$ and the remaining unoccupied vertices, such that an edge $Lx$ exists if and only if $L\subseteq N_G(x)$. It is not hard to see that any perfect matching in this auxiliary graph corresponds to an embedding of $H$ in $G$. All in all, the Marriage Theorem (sometimes referred to as Hall's Theorem) plays an important role in our proof and we use the following equivalent version of it (for a proof, the reader is referred to any basic text book in Graph Theory).

As remarked in the previous section, we use Hall's criteria in order to finish off an embedding of a desired graph. The following theorem is not the standard version of Hall's Theorem but the equivalence to the original is an easy exercise.

\begin{theorem}[Hall's criteria] \label{thm:hall}
	Let $B = (V_1 \cup V_2, E)$ be a bipartite graph with vertex classes $V_1$ and $V_2$. If $|V_1| = |V_2| = n$ and for every $S \subseteq V_i$ of size $|S| \le n/2$ we have $|N_H(S, V_{3 - i})| \ge |S|$ then $B$ contains a perfect matching.
\end{theorem}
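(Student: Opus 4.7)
The plan is to derive this two-sided bipartite formulation from the standard one-sided Hall marriage theorem, which asserts that $B$ contains a matching saturating $V_1$ if and only if $|N(S)| \ge |S|$ for every $S \subseteq V_1$. Since $|V_1| = |V_2| = n$, any $V_1$-saturating matching is automatically perfect, so it suffices to verify Hall's condition on every subset of $V_1$.

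Fix an arbitrary $S \subseteq V_1$. If $|S| \le n/2$, the hypothesis applied with $i = 1$ immediately yields $|N(S)| \ge |S|$, so there is nothing to do. Otherwise $|S| > n/2$, and I would switch attention to the dual set $T := V_2 \setminus N(S)$. By construction there are no edges between $S$ and $T$, hence $N(T) \subseteq V_1 \setminus S$, a set of size $n - |S| < n/2$. Now split on $|T|$. If $|T| \le n/2$, the hypothesis applied with $i = 2$ gives $|T| \le |N(T)| \le n - |S|$, which rearranges to $|N(S)| = n - |T| \ge |S|$, as desired. If instead $|T| > n/2$, pick any $T' \subseteq T$ with $|T'| = \lceil n/2 \rceil$; the hypothesis gives $|N(T')| \ge |T'| \ge n/2$, but $N(T') \subseteq N(T) \subseteq V_1 \setminus S$ has fewer than $n/2$ vertices, a contradiction. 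Thus this latter subcase is impossible, and Hall's condition holds in every case, so a perfect matching exists.

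There is essentially no deep step here; the whole content is the duality trick of passing from a large $S \subseteq V_1$ to the (smaller) complement $T = V_2 \setminus N(S)$ in order to invoke the $V_2$-side of the hypothesis. The only mild obstacle is a boundary nuisance when $n$ is odd and both $|S|$ and $|T|$ sit very close to $n/2$, which forces one to interpret the inequality $|S| \le n/2$ as $|S| \le \lceil n/2 \rceil$; this is in line with the paper's convention of treating such inequalities with sufficient slack, and small examples with odd $n$ show that this is indeed the intended reading for the statement to hold.
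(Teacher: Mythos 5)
Your proof is correct, and it takes what is essentially the canonical route for deriving this two-sided formulation from the classical one-sided Hall theorem: for small $S \subseteq V_1$ use the hypothesis directly, and for large $S$ pass to the dual set $T = V_2 \setminus N(S)$, observe $N(T) \subseteq V_1 \setminus S$, and invoke the hypothesis on the $V_2$ side (or on a truncation of $T$). Note that the paper itself gives no proof of this statement --- it explicitly says ``the equivalence to the original is an easy exercise'' --- so there is no authorial argument to compare against, but your argument is exactly the exercise they have in mind.

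You are also right to flag the boundary case. For odd $n$ the statement as literally written is false: take $n=3$, $V_1 = \{a_1,a_2,a_3\}$, $V_2 = \{b_1,b_2,b_3\}$, with $a_1$ adjacent to all of $V_2$ and $a_2,a_3$ adjacent only to $b_1$. Every set of size at most $3/2$ (i.e.\ every singleton) on either side expands, yet there is no perfect matching, since $\{a_2,a_3\}$ has a one-vertex neighbourhood. In your proof this manifests in the subcase $|S|,|T| > n/2$ (odd $n$), where taking $T'$ of size $\lfloor n/2 \rfloor$ only yields $\lfloor n/2 \rfloor \le \lfloor n/2 \rfloor$ rather than a contradiction; reading the hypothesis as $|S| \le \lceil n/2 \rceil$, in line with the paper's stated convention of treating rounding with slack, restores the argument and is clearly the intended meaning. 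One very minor stylistic remark: you could avoid even raising the $|T| > n/2$ subcase by arguing directly that $|T| \le n/2$ must hold --- if $|T| > n/2$ then the truncation trick immediately gives the contradiction you found, so it can be folded into a single displayed chain --- but as written the case split is perfectly clear and correct.
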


In many places throughout the paper it will be convenient to work with vertices which are sufficiently \emph{independent}, that is, which are far apart. The next easy lemma shows the existence of a large subset of such vertices. As in all our proofs $\Delta$ will be a constant and $S$ a very large set, we did not try to obtain the best possible bound on $S'$.

\begin{lemma} \label{lemma:distance}
	Let $H$ be a graph with maximum degree $\Delta$. For every subset $S \subseteq V(H)$ and $k \in \mathbb{N}$, there exists a subset $S' \subseteq S$ of size $|S'| \ge |S|/\Delta^{k+1}$ such that every two distinct vertices from $S'$ are at distance at least $k$ in $H$.
\end{lemma}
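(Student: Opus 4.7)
The plan is to proceed by a straightforward greedy selection argument. I would initialise $S' := \emptyset$ and a working set $T := S$, and then repeat the following step as long as $T$ is non-empty: pick an arbitrary vertex $v \in T$, add it to $S'$, and delete from $T$ every vertex at distance strictly less than $k$ from $v$ in $H$ (including $v$ itself).

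The key observation is that at each iteration we remove at most $\Delta^{k+1}$ vertices from $T$. Indeed, the set of vertices at distance at most $k-1$ from $v$ in $H$ is contained in the ball $B_{k-1}(v)$ of radius $k-1$ around $v$, whose size can be bounded, using that $\Delta(H) \le \Delta$, by
$$
|B_{k-1}(v)| \;\le\; 1 + \Delta + \Delta^2 + \cdots + \Delta^{k-1} \;\le\; \Delta^{k+1},
$$
so in particular $|T|$ decreases by at most $\Delta^{k+1}$ per iteration. Therefore the procedure terminates after at least $|S|/\Delta^{k+1}$ iterations, giving $|S'| \ge |S|/\Delta^{k+1}$.

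Finally, to see that the resulting set $S'$ has the required separation property, note that at the moment a vertex $v$ was added to $S'$, every vertex at distance less than $k$ from $v$ was removed from $T$; hence no vertex subsequently added to $S'$ can be at distance less than $k$ from $v$. Thus any two distinct vertices of $S'$ are at distance at least $k$ in $H$, as desired.

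There is no real obstacle here: the only minor point is making the ball-size estimate loose enough to cover the degenerate cases $\Delta \in \{1,2\}$ and small $k$, which is precisely why the slack bound $\Delta^{k+1}$ (rather than the sharper $\sim \Delta^{k-1}$) is used in the statement.
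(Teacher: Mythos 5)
Your proof is correct and is essentially identical to the paper's: a greedy procedure that repeatedly picks a vertex, adds it to $S'$, and deletes a small ball around it, with a crude upper bound of $\Delta^{k+1}$ on the ball size. The only cosmetic difference is that the paper deletes the (slightly larger) ball of radius $k$ and bounds each shell more tightly by $\Delta(\Delta-1)^{j-1}$ rather than $\Delta^{j}$, but both estimates comfortably fit within $\Delta^{k+1}$.
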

\begin{proof}  Build $S'$ greedily as follows: start
with $X:=S$ and $S':=\emptyset$ and in each step add an arbitrary vertex $v\in X$ to $S'$
and delete the $k$-neighbourhood of $v$ (that is, $\{v\}\cup N_H(v)\cup N^2_H(v)\cup \ldots \cup N^k_H(v)$) from
$X$. Since after each addition of a vertex to $S'$ we delete at most
$$1+\Delta+\Delta(\Delta-1)+\ldots+\Delta(\Delta-1)^{k-1}\leq \Delta^{k+1}$$
vertices from $S$, we conclude $|S'| \ge |S| / \Delta^{k+1}$.
\end{proof}

\subsection{Properties of random graphs}

In the following section we introduce some typical properties of random graphs.

\subsubsection{$F$-matchings} \label{sec:F_matching}

Given graphs $G$ and $F$, we refer to a collection of vertex-disjoint copies of $F$ in $G$ as an \emph{$F$-matching}. This notion naturally generalises the notion of a matching -- which is a set of vertex-disjoint edges (that is, $F$ is an edge)-- to arbitrary structures. The first lemma gives a  bound on $p$ for which a typical $\Gnp$ contains a large $F$-matching for arbitrary $F$.

\begin{lemma}[{\cite[Corollary 3.5]{nenadov17square}}] \label{lemma:factor}
	Let $F$ be a graph. If $p \ge \left( n^{-1}\log^3 n\right)^{\frac{1}{m_1(F)}}$, where
	$$
		m_1(F) = \max \left\{ \frac{e(F')}{v(F') - 1}\colon F'\subseteq F, v(F')\geq 2\right\},
	$$
	then $\Gnp$ w.h.p contains a family of at least $n / 4v(F)$ pairwise vertex-disjoint copies of $F$.
\end{lemma}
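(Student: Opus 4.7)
The plan is to prove the lemma by first establishing a robustness property of $G_{n,p}$ and then building the $F$-matching greedily one copy at a time.

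\textbf{Step 1 (Robustness).} I would first show that w.h.p.\ every sufficiently large induced subgraph of $G_{n,p}$ contains a copy of $F$; concretely, that w.h.p.\ for every $V\subseteq[n]$ with $|V|\ge 3n/4$, $G_{n,p}[V]$ contains at least one copy of $F$. For a fixed such $V$, let $X$ be the number of (labelled) copies of $F$ in $G_{n,p}[V]$, so $\mu:=\Exp[X]=\Theta(|V|^{v(F)}p^{e(F)})$. I would apply Janson's inequality to bound $\Pr[X=0]\le \exp\bigl(-\mu^2/(2(\mu+\Delta))\bigr)$, where the pair-correlation sum decomposes, according to the intersection pattern of two copies, as
\[
\Delta \;=\; \sum_{F'\subseteq F,\ e(F')\ge 1} \Theta\bigl(|V|^{2v(F)-v(F')}\, p^{2e(F)-e(F')}\bigr),
\]
so that $\Delta/\mu^2 = \sum_{F'} \Theta\bigl(1/(|V|^{v(F')}p^{e(F')})\bigr)$. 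The hypothesis $p^{m_1(F)}\ge n^{-1}\log^3 n$ combined with $e(F')/(v(F')-1)\le m_1(F)$ yields $p^{e(F')}\ge (n^{-1}\log^3 n)^{v(F')-1}$, hence $|V|^{v(F')}p^{e(F')}\ge \Omega\bigl(n\log^{3(v(F')-1)}n\bigr)\ge \Omega(n\log^3 n)$ uniformly over subgraphs $F'\subseteq F$ with $v(F')\ge 2$. Therefore both $\mu$ and $\mu^2/\Delta$ are $\Omega(n\log^3 n)$, giving $\Pr[X=0]\le \exp\bigl(-\Omega(n\log^3 n)\bigr)$. A union bound over the at most $2^n$ choices of $V$ finishes this step, since $2^n\exp(-\Omega(n\log^3 n))=o(1)$.

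\textbf{Step 2 (Greedy extraction).} Conditional on the event of Step 1, I would construct the $F$-matching $\calM$ iteratively. Starting with $\calM=\emptyset$, as long as $|\calM|<n/(4v(F))$ the number of uncovered vertices is at least $n-v(F)\cdot n/(4v(F))=3n/4$; by Step 1 the induced subgraph on the uncovered vertices contains a copy of $F$, which is then added to $\calM$. After $n/(4v(F))$ iterations we have the desired $F$-matching.

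\textbf{Main obstacle.} The technical heart of the argument is the Janson computation, specifically verifying that the dependency sum $\Delta$ is dominated by $\mu^2/(n\log^3 n)$ with room to spare, uniformly over $V$. This requires tracking polylogarithmic factors over all subgraphs $F'\subseteq F$ and exploits the precise definition of $m_1(F)$; it is exactly this estimate that forces the $\log^3 n$ factor in the hypothesis, because one needs the resulting $\exp(-\Omega(n\log^3 n))$ tail to absorb the $2^n$ loss from the union bound over $V$. Once this is in hand, the greedy step is routine.
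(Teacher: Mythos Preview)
Your proposal is correct and matches the approach the paper alludes to: the lemma is cited from \cite{nenadov17square} and the paper only remarks that ``the proof of Lemma \ref{lemma:factor} is an easy application of Janson's inequality,'' which is precisely your Step~1 (Janson plus a union bound over all large vertex subsets) followed by a greedy extraction. One small overclaim: the $\log^3 n$ is not \emph{forced} by the union bound over $2^n$ sets --- already a single $\log n$ factor would give a tail of $\exp(-\Omega(n\log n))$, which comfortably beats $2^n$ --- so the cubic power is an artifact of convenience in the cited source rather than a sharp requirement of your argument.
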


The proof of Lemma \ref{lemma:factor} is an easy application of Janson's inequality and can easily be adapted to give an $F$-matching of size $(1 - \eps)n/v(F)$ for any small constant $\eps > 0$. However, showing that one can find an $F$-matching which covers all the vertices of $\Gnp$ is a notoriously difficult problem. This was solved by Johansson, Kahn and Vu \cite{johansson2008factors} for all graphs $F$ which satisfy certain \emph{balancedness} condition.

In some applications (such as the one in Section \ref{sec:delta_absorber}) we are interested in an \emph{anchored} $F$-matching --- an $F$-matching where some of the vertices of each copy of $F$ are already prescribed. More precisely, given graphs $G$ and $F$ and $r$-tuples of vertices $\mathbf{x} \in V(F)^r, \mathbf{y} \in V(G)^r$, for some $1 \le r \le v(F)$, we say that $F' \subseteq G$ is an \emph{$(F, \mathbf{x}, \mathbf{y})$-copy} if there exists an isomorphism $f \colon F \hookrightarrow F'$ such that $f(\mathbf{x}) = \mathbf{y}$. In other words, an $(F, \mathbf{x}, \mathbf{y})$-copy is a copy of $F$ for which the vertices $\mathbf{x}$ are mapped onto $\mathbf{y}$ (a copy of $F$ \emph{anchored} in $\mathbf{y}$). We call vertices $f(F) \setminus \mathbf{y}$ the \emph{internal vertices}.

\begin{definition}
	Let $G$ and $F$ be graphs and $\mathbf{x} \subseteq V(F)^r$ an $r$-tuple of vertices, for some $1 \le r \le v(F)$. Given a family $\mathcal{Y} = \{ \mathbf{y}_i \subseteq V(G)^r \}_{i \in [t]}$ of pairwise disjoint $r$-tuples, we say that a collection $\{F_i \subseteq G\}_{i \in [t]}$ of subgraphs of $G$ forms an \emph{$(F, \mathbf{x}, \mathcal{Y})$-matching} if the following holds:
	\begin{itemize}
		\item $F_i$ is an $(F, \mathbf{x}, \mathbf{y}_i)$-copy for every $i \in [t]$,
		\item $V(F_i) \cap V(F_j) = \emptyset$ for all $i \neq j \in [t]$.	
	\end{itemize}
\end{definition}

The following lemma gives a lower bound on $p$ for which $\Gnp$ admits an $(F, \mathbf{x}, \mathcal{Y})$-matching. In order to state it we need the following version of $m_1$-density: given a graph $F$ and a subset $X \subseteq V(F)$ we denote with $m(F, X)$ the \emph{rooted}-density of $F$, defined as
$$
	m(F, X) = \max_{\substack{F' \subseteq F\\e(F')>0}} \left\{ \frac{e(F')}{v(F') - \max\{1, |V(F') \cap X|\}} \colon
	\text{either } X \subseteq V(F') \text{ or } X \cap V(F') = \emptyset \right\}.
$$

\begin{lemma}[{\cite[lemma 3.3]{nenadov17square}}] \label{lemma:connecting}
	Let $F$ be a graph and $\mathbf{x} \subseteq V(F)^r$ an $r$-tuple of independent vertices in $F$, for some $r \le v(F) - 2$. Given a positive constant $\alpha \in \mathbb{R}$ and a subset $W \subseteq [n]$ of size $|W| \ge \alpha n$, if
	$$
		p \ge \left( n^{-1}\log^3 n \right)^{1/m(F, \mathbf{x})}
	$$
	then $G = \Gnp$ w.h.p has the following property: For every family $\mathcal{Y} = \{\mathbf{y}_i \in (V(G) \setminus W)^r\}_{i \in [t]}$ of $t \le |W| / 4(v(F) - r)$ disjoint $r$-tuples, there exists an $(F, \mathbf{x}, \mathcal{Y})$-matching in $G$ with all internal vertices being in $W$.
\end{lemma}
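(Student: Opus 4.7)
The plan is to combine Janson's inequality with a greedy argument, mirroring the standard strategy for rooted subgraph counts in random graphs. For a fixed anchor $\mathbf{y} \in (V(G) \setminus W)^r$, let $X_\mathbf{y}$ count the $(F, \mathbf{x}, \mathbf{y})$-copies in $G$ whose internal vertices lie in $W$; by direct computation its expectation is $\mu_\mathbf{y} = \Theta(|W|^{v(F)-r} p^{e(F)})$. Janson's inequality yields
\[
    \Pr[X_\mathbf{y} \le \mu_\mathbf{y}/2] \;\le\; \exp\!\left( -\Omega\!\left( \min_{F'} n^{v(F') - r'} p^{e(F')} \right) \right),
\]
where the minimum ranges over subgraphs $F' \subseteq F$ with $e(F') \ge 1$ that either contain $\mathbf{x}$ or are disjoint from it, and $r' := \max\{1, |V(F') \cap \mathbf{x}|\}$. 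This is precisely the quantity controlled by the rooted density $m(F, \mathbf{x})$: the hypothesis $p \ge (n^{-1}\log^3 n)^{1/m(F,\mathbf{x})}$ forces every term inside the min to be at least $\log^3 n$, so the failure probability is $e^{-\Omega(\log^3 n)}$. A union bound over the at most $n^r$ anchors then gives that w.h.p., $X_\mathbf{y} \ge \mu_\mathbf{y}/2$ simultaneously for every $\mathbf{y}$.

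Given this, I would build the $(F, \mathbf{x}, \mathcal{Y})$-matching greedily, processing $\mathbf{y}_1, \dots, \mathbf{y}_t$ in order. At step $i$, let $U_i \subseteq W$ be the set of internal vertices already chosen; the hypothesis on $t$ ensures $|U_i| \le (i-1)(v(F)-r) \le |W|/4$. To pick $F_i$, I need some copy counted by $X_{\mathbf{y}_i}$ to avoid $U_i$. This follows as soon as no single vertex $u \in W$ lies in too many such copies, specifically in at most $O(\mu_{\mathbf{y}_i}/|W|)$ of them, which is the correct order of magnitude by the symmetric expectation. Such an upper bound can be obtained by a further Janson or upper-tail computation on the count of copies that pin $u$ to a prescribed internal position $w \in V(F) \setminus \mathbf{x}$, summed over $w$; union-bounding over $u$, $w$, and $\mathbf{y}$ adds only a polynomial factor. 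Summing over $u \in U_i$ then blocks at most a proper fraction of the $X_{\mathbf{y}_i}$ copies, leaving at least one available to serve as $F_i$.

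The main technical obstacle I expect is the Janson bookkeeping in the first step: showing that $\Delta^* = \sum p^{2e(F) - e(F_1 \cap F_2)}$, over ordered pairs of overlapping $(F, \mathbf{x}, \mathbf{y})$-copies, is dominated by terms indexed by subgraphs $F'$ which either contain $\mathbf{x}$ (when the two copies share their root tuple) or are disjoint from it (when they share only internal vertices). This dichotomy is exactly what forces the ``either/or'' condition and the $\max\{1,\cdot\}$ correction in the definition of $m(F, \mathbf{x})$. A secondary subtlety is that when the root tuple is augmented by a vertex $w$, the corresponding density $m(F, \mathbf{x} \cup \{w\})$ can exceed $m(F, \mathbf{x})$, so for the vertex-rooted count one must settle for an upper bound of the right order of magnitude rather than sharp concentration; standard upper-tail inequalities (or a direct moment calculation) suffice for this. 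Once these estimates are in place, the remaining steps---union bound and greedy selection---are routine.
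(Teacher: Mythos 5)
The paper does not prove this lemma; it is imported verbatim from \cite{nenadov17square} (Lemma~3.3 there), so there is no in-paper proof to compare against. Evaluating your sketch on its own merits, the first step --- Janson plus a union bound over anchors $\mathbf{y}$ to conclude $X_{\mathbf{y}} \ge \mu_{\mathbf{y}}/2$ simultaneously --- is sound, and your diagnosis of where the either/or condition in $m(F,\mathbf{x})$ enters the Janson bookkeeping is broadly right (though you gloss over why common subgraphs with a \emph{proper nonempty} intersection with $\mathbf{x}$ cannot be the bottleneck, which does require a separate argument).

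The greedy step, however, has a genuine gap that cannot be patched by ``a further Janson or upper-tail computation.'' You want to show that every $u\in W$ lies in at most $O(\mu_{\mathbf{y}}/|W|)$ of the copies counted by $X_{\mathbf{y}}$. But $\mu_{\mathbf{y}}/|W| = \Theta\bigl(|W|^{v(F)-r-1}p^{e(F)}\bigr)$, and at the critical edge probability $p = \bigl(n^{-1}\log^3 n\bigr)^{1/m(F,\mathbf{x})}$ the constraint coming from $F'=F$ pins $n^{v(F)-r}p^{e(F)}$ at $\Theta(\log^3 n)$, so $\mu_{\mathbf{y}}/|W| = \Theta(\log^3 n / n) = o(1)$. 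A per-vertex copy count $N_{u,w,\mathbf{y}}$ is a nonnegative integer, so the bound $N_{u,w,\mathbf{y}} = O(\mu_{\mathbf{y}}/|W|) = o(1)$ you would need is simply impossible whenever even one copy passes through $u$ --- and a positive fraction of vertices of $W$ will be internal to some copy, since $\sum_{u\in W} N_{u,w,\mathbf{y}} = X_{\mathbf{y}}$. In this regime $\mu_{\mathbf{y}} = \Theta(\log^3 n)$ while $|U_i|$ can be as large as $|W|/4 = \Theta(n)$, so a naive greedy could block \emph{all} $\Theta(\log^3 n)$ copies at some anchor long before running out of budget; there is no slack. Even in the regime $\mu_{\mathbf{y}}/|W| \ge 1$ the constants do not close: each $u\in U_i$ can contribute to $v(F)-r$ internal positions, so summing a bound $N_{u,w,\mathbf{y}} \le C\mu_{\mathbf{y}}/|W|$ over $u\in U_i$ and $w$ yields up to $C(v(F)-r)\mu_{\mathbf{y}}/4$ blocked copies, which exceeds $\mu_{\mathbf{y}}/2$ as soon as $C(v(F)-r)\ge 2$ --- and an upper-tail constant $C<1$ is unattainable. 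Finally, a union bound over $(u,w,\mathbf{y})$ is unavailing when the per-triple expectation is near~$1$: the failure probability of a fixed triple exceeding its mean by a constant factor is then only polynomially small, not $n^{-\omega(1)}$. The statement really does require controlling the matching problem for an \emph{adversarial} set of blocked vertices of linear size, and a greedy built on per-vertex concentration cannot do this; the standard route in the cited source handles the quantifier over $\mathcal{Y}$ by proving, for each fixed family $\mathcal{Y}$, a failure probability small enough to beat the $n^{O(rt)} = e^{O(n\log n)}$ union bound, which is a qualitatively different (and stronger) concentration statement than the one you set up.
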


Similarly as in Lemma \ref{lemma:factor}, the factor $1/4$ in the upper bound on $t$ is somewhat arbitrary and could be replaced by any constant $c < 1$.

\subsubsection{Expansion properties of random graphs}
\label{sec:expansion_aux}

%Therefore, it is necessary to accumulate few expansion properties of this auxiliary graph which will help us later in showing that Hall's Theorem typically holds.

The following two lemmata show certain expansion properties of random graphs or, more precisely, of auxiliary bipartite graphs $\mathcal{B}_G(\calL, U)$ induced by random graphs. The first lemma plays an important role in the proof of one of our main ingredients, Lemma \ref{lemma:S_almost_spanning}. Both lemmata are used to show that for certain $\calL$ and $U$ the corresponding auxiliary bipartite graph contains a perfect matching (utilising Hall's criteria). Proofs of both statements are standard application of Chernoff's inequality thus we omit them (for a similar proof see \cite[Lemma 4.3]{ferber2015universality})

\begin{lemma} \label{lemma:expansion}
	Let $d \in \mathbb{N}$ and $\lambda \in \mathbb{R}$ be a positive constant. Given a subset $U \subseteq [n]$ of size $|U| \ge n / \log n$, if $p \ge \left( n^{-1}\log^2 n \right)^{1/d}$ then $G = \Gnp$ has the following property with probability $1 - O(1/n)$: for every family $\calL \subseteq \binom{V(G)}{d}$ of pairwise disjoint $d$-subsets we have
	$$
		\big| N_\calB(\calL) \big| \ge \begin{cases}
			 |\calL||U| p^d / 2, &\text{ if } |\calL| \le 1 / p^d, \\
			 (1 - \lambda)|U|, &\text { if } |\calL| \ge \log n / p^d,
		\end{cases}
	$$	
	where $\calB = \calB_G(\calL, U)$.
\end{lemma}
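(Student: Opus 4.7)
The plan is to fix a family $\calL$ of pairwise disjoint $d$-subsets of $V(G)$ with $|\calL|=\ell$, prove the desired bound for this $\calL$ with high enough probability, and then take a union bound over all such families of each admissible size. For $u \in U$, let $Z_u$ denote the number of $L \in \calL$ with $L \subseteq N_G(u)$, so that $|N_\calB(\calL)| = \sum_{u \in U} \mathbf{1}[Z_u \ge 1]$. The crucial structural observation is that $Z_u$ depends only on the edges of $G$ incident to $u$, so the indicators $\mathbf{1}[Z_u \ge 1]$ for different $u \in U$ are mutually independent. Moreover, pairwise disjointness of $\calL$ makes the events $\{L \subseteq N_G(u)\}_{L \in \calL}$ independent whenever $u \notin \bigcup_{L \in \calL} L$, so
\[
\Prob[Z_u \ge 1] = 1 - (1-p^d)^\ell
\]
for all but at most $d\ell \le d/p^d = o(|U|)$ vertices of $U$; this boundary effect costs only a $(1-o(1))$-factor and will be absorbed into subsequent constants.

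In the first regime $\ell \le 1/p^d$, I set $x := \ell p^d \in (0,1]$ and use that $(1-e^{-x})/x$ is decreasing on this interval with minimum value $1 - 1/e$ to obtain
\[
\Exp\bigl[|N_\calB(\calL)|\bigr] \;\ge\; (1-1/e - o(1))\,|U|\ell p^d,
\]
which exceeds the target $|U|\ell p^d/2$ by a definite constant factor (roughly $1.26$). A multiplicative Chernoff lower-tail bound applied to the sum of independent indicators then yields
\[
\Prob\bigl[|N_\calB(\calL)| < |U|\ell p^d/2\bigr] \;\le\; e^{-c_1 |U|\ell p^d} \;\le\; e^{-c_1 C \ell \log n},
\]
where the last step uses $|U|p^d \ge C \log n$, a strengthening of the hypothesis $p^d \ge \log^2 n /n$ by absorbing a sufficiently large constant (permitted by the paper's rounding conventions). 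Since there are at most $\binom{\binom{n}{d}}{\ell} \le n^{d\ell}$ families of size $\ell$, choosing $C$ large enough in terms of $d$ gives failure probability at most $n^{-2\ell}$ per $\ell$, which sums to $O(n^{-2}) = O(1/n)$.

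In the second regime $\ell \ge \log n/p^d$, I instead estimate $\Prob[Z_u = 0] \le (1-p^d)^{\ell-1} \le e^{-p^d \ell/2}$ and apply the Chernoff upper tail $\Prob[X \ge a] \le (e\mu/a)^a$ to $|U \setminus N_\calB(\calL)|$ with $a = \lambda|U|$ and $\mu \le |U|e^{-p^d\ell/2}$; once $p^d \ell$ exceeds a constant depending on $\lambda$ (automatic in this range), this produces
\[
\Prob\bigl[|U \setminus N_\calB(\calL)| \ge \lambda|U|\bigr] \;\le\; e^{-c_2(\lambda)\,|U|p^d\ell},
\]
and the same $n^{d\ell}$ family-counting argument again yields $O(1/n)$ after summing over $\ell$, provided the constant in $p^d \ge \log^2 n/n$ is chosen large in terms of $d$ and $\lambda$. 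The main technical obstacle is the tightness of the factor $1/2$ in the first regime: the expectation only exceeds the target by a constant factor, so one cannot apply Chernoff with a gross $\delta$, and must instead keep careful track of the concrete constants and hide the resulting losses in a sufficiently large implicit constant on the hypothesis $p \ge (n^{-1}\log^2 n)^{1/d}$.
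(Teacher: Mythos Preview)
Your approach---Chernoff on the independent indicators $\mathbf{1}[Z_u \ge 1]$ followed by a union bound over all families $\calL$ of each size---is exactly the ``standard application of Chernoff's inequality'' the paper invokes in lieu of a written proof. Your caveat about the tightness of the factor $1/2$ is well taken: the stated hypothesis yields only $|U|p^d \ge \log n$, which is borderline for the union bound over $n^{d\ell}$ families, but in every application the paper in fact assumes $p \ge (n^{-1}\log^3 n)^{1/d}$ (hence $|U|p^d \ge \log^2 n$), which supplies the slack you need.
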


\begin{lemma} \label{lemma:expansion_reverse}
	Let $d \in \mathbb{N}$ and $\lambda \in \mathbb{R}$ be a positive constant. Given a family $\calL \subseteq \binom{V(G)}{d}$ of pairwise disjoint $d$-subsets of size $|\calL| \ge n / \log n$, if $p \ge (n^{-1}\log^2 n)^{1/d}$ then $G = \Gnp$ w.h.p has the following property: for every subset $U \subseteq V(G)$ we have
	$$
		\big| N_{\calB}(U) \big| \ge \begin{cases}
			 |U| |\calL| p^d / 2, &\text{ if } |U| \le 1 / p^d, \\
			 (1 - \lambda)|\calL|, &\text { if } |U| \ge \log n / p^d,
		\end{cases}
	$$
	where $\calB = \calB_G(\calL, U)$.
\end{lemma}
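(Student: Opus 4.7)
The plan is to follow the standard Chernoff-plus-union-bound scheme, just as for the (dual) Lemma \ref{lemma:expansion}. First I would fix a subset $U \subseteq V(G)$ and write
$$
  |N_\calB(U)| \;=\; \sum_{L \in \calL} X_L, \qquad X_L := \mathbf{1}\bigl[\exists u \in U : L \subseteq N_G(u)\bigr].
$$
The crucial structural observation is that the sets $L \in \calL$ are pairwise disjoint, so $X_L$ depends only on the $d|U|$ potential edges between $U$ and $L$. For different $L$'s these edge sets are disjoint, hence the family $\{X_L\}_{L \in \calL}$ is \emph{independent}, each Bernoulli of parameter $1 - (1-p^d)^{|U|}$. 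Consequently $\mu := \Exp[|N_\calB(U)|] = |\calL|\bigl(1 - (1-p^d)^{|U|}\bigr)$.

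Next I would estimate $\mu$ in each regime. In Case 1 ($|U| = s \le 1/p^d$), the elementary bound $1 - (1-p^d)^s \ge s p^d / 2$ gives $\mu \ge |\calL| s p^d / 2$; picking constants slightly better (so that $\mu$ is a larger multiple of the target), the multiplicative Chernoff bound yields
$$
  \Pr\bigl[|N_\calB(U)| < |\calL| s p^d / 2\bigr] \;\le\; \exp\bigl(-c\,|\calL| s p^d\bigr)
$$
for an absolute $c > 0$. In Case 2 ($|U| \ge \log n / p^d$), we have $(1-p^d)^{|U|} \le e^{-|U|p^d} \le 1/n$, so $\mu \ge (1-1/n)|\calL|$, and Chernoff gives $\Pr[|N_\calB(U)| < (1-\lambda)|\calL|] \le \exp(-c_\lambda |\calL|)$.

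Finally I would union-bound over all relevant $U$. In Case 2 this is easy, since $\mu \asymp |\calL| = \Omega(n/\log n)$ dominates the $2^n$ enumeration with room to spare once we remember that $|U| \ge \log n / p^d$. In Case 1, the number of subsets of size $s$ is $\binom{n}{s} \le (en/s)^s = \exp(O(s \log n))$, while Chernoff saves $\exp(-\Omega(|\calL| s p^d))$ per subset. The two expressions balance exactly when $|\calL| p^d$ beats $\log n$ by a sufficiently large constant factor, which is guaranteed by the hypotheses $|\calL| \ge n/\log n$ and $p^d \ge \log^2 n / n$ once one takes the ``sufficient margin'' convention of the notation section into account. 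Summing the tail bound over $s$ from $1$ to $1/p^d$ then leaves a total failure probability of $o(1)$.

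The main technical obstacle is precisely this last step: ensuring that the Chernoff saving in Case 1 uniformly dominates the enumeration of subsets $U$ of size $s$ for every $s \le 1/p^d$, including very small $s$ (say $s=1$). This is where the interaction between the lower bounds on $|\calL|$ and on $p$ is used, and it is also why the hypotheses on $|\calL|$ and on $p$ take the particular forms stated.
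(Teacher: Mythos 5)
Your overall scheme---fix $U$, exploit pairwise disjointness of $\calL$ to write $|N_\calB(U)|$ as a sum of \emph{independent} Bernoulli indicators, apply Chernoff and then union-bound over $U$---is the standard argument the paper gestures at (the paper omits this proof, citing it as a routine Chernoff application). The structural observation that disjointness of the $L$'s yields independence is the right one (it does require $U$ to be disjoint from $\bigcup\calL$, a detail you elide but which holds in all of the paper's applications), and your Case~1 analysis is fine modulo the paper's ``sufficiently large margin'' convention on constants.

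Your Case~2 does not close as written, however. The lower-tail Chernoff estimate $\Pr\bigl[|N_\calB(U)| < (1-\lambda)|\calL|\bigr] \le e^{-c_\lambda|\calL|}$ is quantitatively too weak: even after invoking monotonicity of $N_\calB(\cdot)$ to restrict attention to $|U| = s_0 := \log n / p^d$, the enumeration $\binom{n}{s_0} \le e^{s_0\log n}$ can have $s_0 \log n$ as large as $n$ (since $s_0 \le n/\log n$ under the hypothesis on $p$), while $c_\lambda|\calL| \le c_\lambda n$ with $c_\lambda$ a small constant depending on $\lambda$---so the union bound fails, let alone the ``$2^n$ enumeration'' you mention. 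The missing idea is to use that, for $|U| \ge s_0$, each fixed $L\in\calL$ \emph{individually} fails to lie in $N_\calB(U)$ with probability $(1-p^d)^{|U|} \le 1/n$. Writing $Z := |\calL| - |N_\calB(U)| \sim \mathrm{Bin}(|\calL|, q)$ with $q \le 1/n$, the upper-tail bound
$$
\Pr[Z \ge \lambda|\calL|] \;\le\; \binom{|\calL|}{\lambda|\calL|}\,q^{\lambda|\calL|} \;\le\; \Bigl(\tfrac{e}{\lambda n}\Bigr)^{\lambda|\calL|} \;=\; e^{-\lambda|\calL|\log n\,(1-o(1))}
$$
gains the crucial extra factor of $\log n$ in the exponent, which---together with the margin convention---is what beats $\binom{n}{s_0}$. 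Your parenthetical ``once we remember that $|U|\ge\log n/p^d$'' hints at this, but the bound you actually quote is the suboptimal $e^{-c_\lambda|\calL|}$, which is off by a $\log n$ factor precisely where the argument is tight.
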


%!TEX root = spanning.tex
\section{Almost-spanning $S$-embeddings}
\label{sec:S_embedding}

In this section we present one of the main ingredients in the proof of Theorem \ref{thm:spanning_deg} and its refinement tailored to the proof of Theorem \ref{thm:spanning}. In order to motivate its statement we give a brief overview of the strategy used to prove these two theorems.

In the preceding section we introduced the notion of an auxiliary bipartite graph $\calB_G(\calL, U)$ and explained how it comes into play to finish off an embedding of $H$. Briefly, we first embed a subgraph $H' \subseteq H$ obtained from $H$ by removing an independent set of vertices $I$. Then, in order to complete such a partial embedding $\phi$ into an embedding of $H$ we need to argue that there exists a perfect matching in an auxiliary bipartite graph $\calB_G(\calL, U)$, where $\calL = \{\phi(N_H(v)) \colon v \in I\}$ and $U = V(G) \setminus \phi(V(H'))$. This goes smoothly if we are to embed only one graph $H$ as we have a freedom to  \emph{sprinkle}  few edges at the end (that is, to use the standard \emph{multiple exposure} trick) in order to obtain the required matching (and of course, assuming that $p$ is large enough so every subset in $\mathcal L$ will have many common extensions). However, doing so in the universality setting turns out to be more difficult as the error probabilities from the sprinkling parts are way too large for taking a union bound over all possible graphs (and in the proof of Theorem \ref{thm:spanning} we will have another obstacle to pass, namely our edge-probability $p$ is going to be too small for the sprinkling trick to work).
%Here we are interested in embedding \emph{all} $d$-degenerate graphs $H$ \emph{simultaneously}, and therefore such a strategy does not work in its current form (the probability not to have a matching is quite large and therefore one cannot take a union bound over all graphs $H$).
In order to achieve that such a bipartite graph indeed has a perfect matching we need to do some preparations. This is roughly being done as follows: we choose $I$ to be a subset of vertices  of $H$ which are far apart and whose neighbourhoods induce the same graph $F$. Then we first embed the neighbourhoods of these vertices, which corresponds to an $F$-matching. Moreover, we put aside a small subset of vertices $X \subseteq V(G)$ which will help us to verify the Hall's criteria in $\calB_G(\calL, U)$ (note that this refers to the proof of Theorem \ref{thm:spanning_deg}; the proof of Theorem \ref{thm:spanning} is more delicate). Next, we extend the embedding of the neighbourhoods of the vertices from $I$ into an embedding of $H' = H \setminus I$ such that no vertex from $X$ is used. This is accomplished by Lemma \ref{lemma:S_almost_spanning}. Finally, the fact that $X$ is chosen upfront and is not used so far will enable us to prove the existence of a perfect matching in the auxiliary bipartite graph at the end. 

Before we state Lemma \ref{lemma:S_almost_spanning} we need the following definition.

%is to remove a carefully chosen family of small (and isomorphic) subgraphs from a graph $H$ and, furthermore, choose the `same' vertex in each such subgraph . Then, we embed all these graphs into $G\sim \Gnp$ (without the special vertex) and claim that some pseudorandom properties hold (see previous section). Now, we wish to embed the \emph{almost spanning} subgraph $H'\subseteq H$, consisting of all vertices of $H$ but the removed ones, and the we wish to claim that we can indeed extend the embedding into an embedding of $H$. In this section we present the Lemma which ensures us to embed $H'$ (given an embedding of the small graphs).  First, we need the following definition.
	
\begin{definition}[$S$-embedding] \label{def:s_embedding}
	Given graph $G, H$ and a subset $S \subseteq V(H)$, we say that a mapping $\phi \colon V(H) \rightarrow V(G)$ is an \emph{$S$-embedding}, with the notation $\phi \colon H \hookrightarrow_S G$, if $\phi$ is injective and $\phi(h)\phi(h') \in G$ for every $hh' \in H \setminus E(H[S])$.
\end{definition}

This definition will become clearer after the statement of Lemma \ref{lemma:S_almost_spanning}. The following simple fact is used throughout the proofs of Theorem \ref{thm:spanning_deg} and Theorem \ref{thm:spanning}: Suppose $G_1$ and $G_2$ are graphs on the same vertex set. If $\phi' \colon H[S] \hookrightarrow G_1$ and $\phi \colon H \hookrightarrow_S G_2$ extends $\phi'$, then $\phi \colon H \hookrightarrow G_1 \cup G_2$.

Now we are ready to state our main embedding lemma. We remark that the proof is entirely based on an embedding scheme introduced by Conlon and Nenadov \cite{conlon17size}.

\begin{lemma} \label{lemma:S_almost_spanning}
	Let $d, \Delta \in \mathbb{N}$ be such that $1 \le d \le \Delta$ and $\alpha, \gamma \in \mathbb{R}$ positive constants. Given a subset $W \subseteq [n]$ of size $|W| \ge \alpha n$, if
	$$
		p \ge \left( n^{-1}\log^3 n\right)^{1/d}
	$$
	then $G = \Gnp$ w.h.p has the following property: For every subset $X \subseteq V(G) \setminus W$, every graph $H \in \calH(n - |X| - \gamma n, \Delta)$ and every subset $S \subseteq V(H)$ such that there exists an ordering $(h_1, \ldots, h_m)$ of $V(H) \setminus S$ with
	$$
		|N_H(h_i, S \cup \{h_1, \ldots, h_{i-1}\})| \le d \quad \text{ for every } 1 \le i \le m,
	$$
	any injective mapping $\phi' \colon S \rightarrow V(G) \setminus (W \cup X)$ can be extended to an $S$-embedding $\phi \colon H \hookrightarrow_S G \setminus X$.
\end{lemma}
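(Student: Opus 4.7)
I plan to embed $h_1,\ldots,h_m$ into $V(G)\setminus X$ in the given order, following the candidate-set book-keeping scheme of Conlon and Nenadov \cite{conlon17size}. For each index $i$ I maintain a \emph{candidate set}
$$
C_i \;=\; \bigcap_{v\in N_H(h_i)\cap(S\cup\{h_1,\ldots,h_t\})} N_G(\phi(v)) \;\cap\; \big(V(G)\setminus(X\cup\phi'(S)\cup\{u_1,\ldots,u_t\})\big),
$$
where $\phi(v):=\phi'(v)$ for $v\in S$ and $\phi(h_j):=u_j$ for previously chosen $u_j$; initially (before any $u_t$ is selected) only the back-neighbours of $h_i$ in $S$ contribute to the first intersection. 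At step $t$ I choose $u_t\in C_t$ uniformly at random, set $\phi(h_t):=u_t$, and refresh every $C_j$ with $j>t$ accordingly. If $C_t\neq\emptyset$ whenever it is reached, then $\phi'\cup\phi$ is the desired $S$-embedding of $H$ into $G\setminus X$.

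Writing $N_i(t):=N_H(h_i)\cap(S\cup\{h_1,\ldots,h_t\})$, so that $|N_i(t)|\le d$ by hypothesis, the goal is the invariant
$$
|C_i| \;\ge\; c\,n\,p^{|N_i(t)|} \;\ge\; c\,\log^3 n
$$
throughout the procedure, for some constant $c=c(\alpha,\gamma,\Delta)>0$. Since this lower bound is positive, every step succeeds. The invariant rests on two ingredients. The first is a \emph{base} expansion statement which holds w.h.p.\ for $G=\Gnp$: $|N_G(L)\cap W|\ge\tfrac12\alpha n\,p^{|L|}$ for every $L\subseteq V(G)$ with $|L|\le d$. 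This is a direct Chernoff plus a union bound over the $n^{O(d)}$ choices of $L$, afforded by $\alpha n p^d\ge\alpha\log^3 n$; note that $W$ is disjoint from $X\cup\phi'(S)$ by hypothesis, so $N_G(L)\cap W$ sits inside the candidate sets and provides a persistent pool of fresh vertices. The second ingredient is a \emph{refresh} step: each intersection $C_j\leftarrow C_j\cap N_G(u_t)$ reduces $|C_j|$ to at least $\tfrac12 p\,|C_j|$, contributing one extra power of $p$ to the invariant. This again follows from $\Gnp$-expansion estimates in the spirit of Lemmas~\ref{lemma:expansion}--\ref{lemma:expansion_reverse} applied to the auxiliary bipartite graph $\calB_G(\calL,U)$.

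The main obstacle is the cumulative effect of the book-keeping deletions $C_j\leftarrow C_j\setminus\{u_t\}$: these total up to $m=\Theta(n)$ over the whole procedure, which in principle could wipe out a candidate set of size only $c\log^3 n$. This is precisely why $u_t$ is chosen uniformly at random from $C_t$: in step $t$ the expected shrinkage of $|C_j|$ due to deletion is $|C_t\cap C_j|/|C_t|$, and a martingale/supermartingale concentration argument then shows that each $|C_j|$ stays within a constant factor of the quantity prescribed by the invariant throughout the procedure, with failure probability small enough to absorb a union bound over all adversarial inputs $(H,S,X,\phi')$. The remaining pieces---the initialisation of $C_i$ using the back-neighbours of $h_i$ in $S$, the verification that the refresh step preserves the invariant, and the routine check that at most $m\le n-|X|-\gamma n$ vertices ever leave the reservoir $V(G)\setminus(X\cup\phi'(S))$---are then standard.
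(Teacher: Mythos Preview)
Your sketch has a genuine gap in the deletion-control step, and the gap is exactly the one that the paper's proof is designed to avoid.

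The ``refresh'' part of your invariant is actually fine: the statement ``$|N_G(L)\cap W|\ge\tfrac12\alpha n\,p^{|L|}$ for every $L$ with $|L|\le d$'' holds w.h.p.\ by Chernoff and a union bound over the $n^{O(d)}$ choices of $L$, and this already gives you $|N_G(L_i)\cap W|\ge\tfrac12\alpha\log^3 n$ for the final back-neighbourhood $L_i$ of each $h_i$. The problem is entirely the cumulative deletion of the $u_t$'s. Take a vertex $h_i$ with $|L_i|=d$ and consider the set $A:=N_G(L_i)\cap W$, of size roughly $\alpha n p^d$. Whenever $h_t$ has few back-neighbours (say none), its candidate set $C_t$ has size $\Theta(n)$, and a uniform pick from $C_t$ lands in $A$ with probability $\Theta(|A|/n)=\Theta(p^d)$. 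Over $m=\Theta(n)$ steps this gives an \emph{expected} number of hits of order $np^d\asymp|A|$, so with at least constant probability $A$ is completely exhausted before you reach step $i$. No martingale concentration can repair this: the expectation is already on the wrong side.

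There is also a logical issue with ``failure probability small enough to absorb a union bound over all adversarial inputs $(H,S,X,\phi')$''. The randomness you are invoking is that of the uniform choices $u_t$, which are made \emph{after} $G$ and the input are fixed. To prove the lemma you would need, for each fixed $G$ in some w.h.p.\ event, that the random embedding succeeds with \emph{positive} probability for every input; a union bound over the $n^{\Theta(n)}$ inputs is neither possible (a martingale on a $\log^3 n$-sized quantity gives at best $e^{-\mathrm{poly}\log n}$ failure probability) nor needed. But as argued above, even positive probability fails.

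The paper's proof sidesteps all of this with a purely deterministic argument. It partitions $W$ into $k=2\log n/\log\log n$ disjoint reservoirs $W_1,\ldots,W_k$, each of size $n/\log n$, and always places $h_i$ into the lowest-indexed $W_j$ (with $W_0:=V(G)\setminus(X\cup W_1\cup\cdots\cup W_k)$) that still contains a free common neighbour of $L_i$. Using the expansion property of Lemma~\ref{lemma:expansion} for each $W_j$, one shows by induction that the number $|J_j|$ of vertices forced into $W_j$ satisfies $|J_j|\le (C\log n)^j/(n^{j-1}p^{jd})$, which is $0$ for $j=k$. Hence $W_k$ is never touched and is always available as a target, so the greedy procedure never gets stuck. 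This cascading-reservoir argument is the missing idea; the randomised book-keeping you propose does not replace it.
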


Before proving Lemma \ref{lemma:S_almost_spanning} we briefly spell out its statement: Suppose we are given a small set $W \subseteq V(G)$. Then the lemma says that for \emph{any} set $X \subseteq V(G) \setminus W$, \emph{any} graph $H$ such that $G$ is large enough to accommodate it without using $X$, \emph{any} injective mapping which avoids $W$ and $X$ can be extended to an $S$-embedding of the whole graph $H$ which avoids $X$. Here one should think of $H$ as $H'$ from the preceding discussion and $S$ as being the neighbourhood of $I$.

The role of $X$ has already been explained. The main role of the set $W$ in Lemma \ref{lemma:S_almost_spanning} is to prevent the embedding process from getting stuck. For the convenience of the reader we demonstrate it on the following example: suppose $V(H) = S \cup V'$ and assume there exists an edge $sw \in H$ such that $s \in S$ and $w \in V'$. Because we allow for an arbitrary injection $\phi \colon S \rightarrow V(H) \setminus (W \cup X)$, it could happen that all the neighbours of $s$ outside of $X$ and $W$ are in $\phi(S)$, that is, $N_G(s) \setminus (X \setminus W) \subseteq \phi(S)$. If it was not for the set $W$, this would prevent us from completing the embedding. However, having the set $W$ put aside (a set which has a `typical' behavior), we expect that $s$ has sufficiently large neighbourhood into $W$ and, as $\phi(S) \cap W = \emptyset$, we expect to find a candidate for $w$ in $W$.

%This turns out to give enough flexibility for the application in proofs of our main results.

We note that something along these lines was used, for example, in \cite{alon2007embedding,dellamonica2015improved,ferber2015universality} where in order to embed the next batch of vertices one uses a set which was put aside especially for that purpose.

\begin{proof}[Proof of Lemma \ref{lemma:S_almost_spanning}]
Set $k = 2 \log n/\log \log n$ and let $W_1, \ldots, W_k \subseteq W$ be disjoint subsets, each of size $|W_i| = n / \log n$. Then $G = \Gnp$ w.h.p satisfies the property of Lemma \ref{lemma:expansion} with $\lambda = \gamma/4$ for every $W_i$ and $V(G)$ (as $U$). We show that such $G$ satisfies the property of the lemma.

Consider a subset $X \subseteq V(G) \setminus W$ and set $W_0 := V(G) \setminus (W_1 \cup \ldots \cup W_k \cup X)$. Let $H \in \calH(n - |X| - \gamma n, \Delta)$ and $S \subseteq V(H)$ be such that there exists an ordering $(h_1, \ldots, h_m)$ of $V(H) \setminus S$ with
$$
	|N_H(h_i, S \cup \{h_1, \ldots, h_{i-1}\})| \le d,
$$
for every $1 \le i \le m$. Given an arbitrary injection $\phi' \colon V(G) \setminus (W \cup X)$ we construct an extension $\phi \colon H \hookrightarrow_S G \setminus X$ of $\phi'$ by iteratively defining $\phi(h_i)$ for $i = 1, \ldots, m$ as follows:
\begin{enumerate}[(i)]
	\item If $L_i := \phi(N_H(h_i, S \cup \{h_1, \ldots, h_{i-1}\}))$ is empty set then choose an arbitrary $v_i \in W_0 \setminus (\phi(S) \cup \{v_1, \ldots, v_{i-1}\})$ and set $\phi(h_i) := v_i$;
	\item Otherwise, for each $j \in \{0, \ldots, k\}$ set
	$$
		C_i^j :=  \{w \in W_{j} \setminus (\phi(S) \cup \{v_1, \ldots, v_{i-1}\}) \colon L_i \subseteq N_G(w) \}
	$$
	and let $j_i \in \{0, \ldots, k\}$ be the smallest index such that $C_i^{j_i}$ is non-empty. Choose arbitrary $v_i \in C_i^{j_i}$ and set $\phi(h_i) := v_i$.
\end{enumerate}
In other words, each $h_i$ is mapped into the first `free' set $W_j$. Observe that
\begin{equation} \label{eq:H_size_W_0}
	v(H) \le n - |X| -  \gamma n < |W_0| - \gamma n / 2,
\end{equation}
which immediately implies (i) is well-defined. Assuming that (ii) can also be always performed, which we show next, definitions of $L_i$ and $C_i^{j_i}$ imply that $\phi$ is an $S$-embedding of $H$ into $G \setminus X$, which concludes the proof. It will be convenient to assume that in case (ii) cannot be performed in some step $i$, we just choose $v_i$ to be an arbitrary `free' vertex in $W_0$ and proceed to the next step (\eqref{eq:H_size_W_0} shows this can always be done).

Let $J_j := \{i \in [m] \colon \phi(h_i) \in W_j\}$ denote the set of indices of vertices which are mapped into $W_j$, for $j \in \{1, \ldots, k\}$. In order to prove that the step (ii) is possible it suffices to show
\begin{equation} \label{eq:W_j_invariant}
	\left| J_j \right| \le \frac{ 2^{j-1} (d\Delta^3 \log n)^j}{n^{j-1} p^{j d}}
\end{equation}
for every $j \in \{1, \ldots, k\}$. Indeed, assuming this is true from the choice of $k$ we have $|J_k| = 0$. Furthermore, from the assumption that $G$ satisfies the property of Lemma \ref{lemma:expansion} for $W_k$ (as $U$) we have that every subset of at most $d$ vertices has a common neighbour in $W_k$. Finally, as $W_k \cap \phi(S) = \emptyset$ and $|L_i| \le d$ the previous two observations imply $C_i^k \neq \emptyset$ in every step of the process, thus (ii) is always well-defined. It remains to show \eqref{eq:W_j_invariant}.

Let us first consider the case $j = 1$. Note that $|L_i| \ge 1$ for every $i \in J_1$  as otherwise $h_i$ is mapped into $W_0$. Using the pigeon-hole principle and Lemma \ref{lemma:distance} with $k = 2$, there exists $D \in \{1, \ldots, d\}$ and a subset $J' \subseteq J_1$ of size $|J'| \ge |J_1| / d\Delta^3$ such that:
\begin{enumerate}[(a)]
	\item $|L_i| = D$ for every $i \in J'$, and
	\item $h_i$ and $h_{i'}$ do not have a common neighbour, for every $i \neq i' \in J'$.
\end{enumerate}
In particular, (b) implies $L_i \cap L_{i'} = \emptyset$. Therefore, if $|J'| \ge \log n / p^d$ we can apply the property of Lemma \ref{lemma:expansion} with $\calL = \{L_i\}_{i \in J'}$ and $V(G)$ (as $U$) to deduce that all but at most $\gamma n / 4$ vertices $v \in V(G)$ satisfy $L_i \subseteq N_G(v)$ for some $i \in J'$. Moreover, from \eqref{eq:H_size_W_0} we have
$$
	|W_0 \setminus \phi(H)| = |W_0| - v(H) \ge \gamma n / 2
$$
thus there exists a vertex $v \in W_0 \setminus \phi(H)$ and $i \in J'$ such that $\phi(L_i) \subseteq N_G(v)$. In other words, $C_i^0 \neq \empty$ which contradicts the assumption that $h_i$ was mapped into $W_1$. This concludes $|J'| < \log n / p^d$ and, consequently, $|J_1| \le d\Delta^3 \log n / p^d$.

We apply similar argument to conclude \eqref{eq:W_j_invariant} for all $j \in \{2, \ldots, k\}$. Let us assume, towards the contradiction, that there exists $j \in \{2, \ldots, k\}$ for which \eqref{eq:W_j_invariant} is not satisfied and consider the smallest such $j$. Then there exists a subset $J' \subseteq J_j$ of size
\begin{equation} \label{eq:J_prim}
	|J'| \ge |J| / d\Delta^3 \ge \frac{ 2^{j-1} (d\Delta^3)^{j-1} \log^{j} n}{n^{j-1} p^{j d}}
\end{equation}
and $D \in \{1, \ldots, d\}$ such that:
\begin{enumerate}[(a)]
	\item $|L_i| = D$ for every $i \in J'$, and
	\item $h_i$ and $h_{i'}$ do not have a common neighbour, for every $i \neq i' \in J'$.
\end{enumerate}
Without loss of generality we may assume $J'$ is exactly of the size indicated on the right hand side in \eqref{eq:J_prim}, which is easily seen to be $o(1/p^d)$. If this is not the case, then we simply consider a subset of $J'$ of that size. Therefore, from the property of Lemma \ref{lemma:expansion} for $W_{j-1}$ (as $U$)  there exist at least
$$
	|J'||W_{j-1}|p^d / 2 \ge \frac{2^{j-2} (d \Delta^3)^{j-1} \log^j n}{ n^{j-1}p^{jd}} \cdot \frac{n}{\log n}p^d > \frac{2^{j-1} (d \Delta^3 \log n)^{j-1} }{n^{j-2} p^{(j-1)d}}
$$
vertices $w \in W_{j-1}$ such that $L_i \subseteq N_G(w)$ for some $i \in J'$. On the other hand, from the assumption that $j \ge 2$ is the smallest index for which \eqref{eq:W_j_invariant} fails we have
$$
	|J_{j-1}| \le \frac{2^{j-1} (d\Delta^3 \log n)^{j-1}}{n^{j-2} p^{(j-1)d}}
$$
thus there exists at least one `free' vertex $w \in W_{j-1} \setminus  J_{j-1} = W_{j-1} \setminus \phi(H)$ such that $L_i \subseteq N_G(w)$ for some $i \in J'$. This implies $C_i^{j-1} \neq \emptyset$ which finally contradicts the assumption that $j$ is the smallest index for which $C_i^j \neq \emptyset$.
\end{proof}

\subsection{Weaker ordering condition for $d = \Delta$}
\label{sec:weaker_ordering}

Note that any graph with maximum degree $\Delta$ is also $\Delta$-degenerate and there are many such graphs which are not $(\Delta-1)$-degenerate (for example, every $\Delta$-regular graph). Therefore, applying Lemma \ref{lemma:S_almost_spanning} on such graphs necessarily requires $p = \tilde \Omega(n^{-1/\Delta})$ even if $S = \emptyset$, which is exactly the bound we try to overcome in Theorem \ref{thm:spanning}. Luckily, $\Delta$-regular graphs are `close' to be $(\Delta-1)$-degenerate: for example, it is a simple exercise to show that by removing a vertex or an edge from each connected component one obtains a $(\Delta - 1)$-degenerate graph. This observation enables us to achieve slightly better bound on $p$ than given by Lemma \ref{lemma:S_almost_spanning}, which is stated in the following lemma. %In particular, Lemma \ref{lemma:delta_S_embedding} plays a central role in the proof of Theorem \ref{thm:spanning}.

% Therefore, a similar strategy to the one we apply in the proof of Theorem \ref{thm:spanning_deg} is not good enough. One main reason for this is that already in the phase where we aim to embed an `almost spanning' subgraph $H'$, every ordering of its vertices necessarily requires $d = \Delta$ in Lemma \ref{lemma:S_almost_spanning}. This requires $p = \tilde \Omega(n^{-1/\Delta})$ which is exactly the bound we wish to overcome. However, every graph with maximum degree $\Delta$ can be made $(\Delta-1)$-degenerate by deleting, for example, one edge in each connected component. This observation enables us to refine Lemma \ref{lemma:S_almost_spanning} for our purposes. The modified version is stated in the following lemma.

\begin{lemma} \label{lemma:delta_S_embedding}
	Let $\Delta \ge 2$ be an integer and $\alpha, \gamma \in \mathbb{R}$ positive constants. Given a subset $W \subseteq [n]$ of size $|W| \ge \alpha n$, if
	$$
		p \ge \left( n^{-1}\log^3 n \right)^{1/(\Delta - 1/2)}
	$$
	then $G = \Gnp$ w.h.p has the following property: For every $X \subseteq V(G) \setminus W$, every graph $H \in \calH(n - |X| - \gamma n, \Delta)$ and every subset $S \subseteq V(H)$ such that
	$$
		|N_H(h, S)| \le \Delta - 1 \quad \text{ for every } h \in V(H) \setminus S,
	$$ 	
	any injective mapping $\phi' \colon S \rightarrow V(G) \setminus (W\cup X)$ can be extended to an $S$-embedding $\phi \colon H \hookrightarrow_S G \setminus X$.
\end{lemma}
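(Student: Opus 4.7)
I will adapt the cascading embedding argument from Lemma~\ref{lemma:S_almost_spanning}, the new ingredient being that certain pairs of vertices of $H^* := H[V(H)\setminus S]$ are embedded \emph{jointly} along an edge of $G$ rather than one at a time. A direct application of Lemma~\ref{lemma:S_almost_spanning} with $d=\Delta-1$ fails because a connected component of $H^*$ may consist entirely of $H$-degree-$\Delta$ vertices (e.g.\ a $\Delta$-regular subgraph of $H$ disjoint from $S$), so no ordering can force back-degree at most $\Delta-1$ everywhere. My first step is therefore to produce an ordering $(h_1,\ldots,h_m)$ of $V(H^*)$ together with a matching $M\subseteq H^*$ such that (i) the two endpoints of every $e\in M$ are consecutive in the ordering, and (ii) every $h_i$ that is unmatched by $M$, or is the earlier endpoint of an $M$-edge, has back-degree $|N_H(h_i,\,S\cup\{h_1,\ldots,h_{i-1}\})|\le\Delta-1$. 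Call $v\in V(H^*)$ \emph{important} if $\deg_H(v)=\Delta$; the hypothesis $|N_H(v,S)|\le\Delta-1$ forces every important vertex to have $\deg_{H^*}(v)\ge1$. I will construct the ordering component by component: on a component $C$ of $H^*$ that contains a non-important vertex $v$, I reverse the BFS order rooted at $v$, so that $v$ is placed last and every other vertex of $C$ has its BFS parent as a forward $H^*$-neighbour; no edge of $M$ is needed here, since $v$'s own back-degree is at most $\deg_H(v)\le\Delta-1$. On a component $C$ consisting entirely of important vertices (so $|C|\ge 2$), I pick any edge $uv\in E(C)$, run BFS from $v$ with $u$ as the last-discovered child of $v$, reverse the order, and add $uv$ to $M$; then the first vertex of the pair, $u$, has the forward $H^*$-neighbour $v$, while $v$ itself is paired with $u$ in $M$. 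Concatenating these local orderings across components in any order is harmless, since $H^*$-edges do not cross components, so back-degrees remain as analysed locally.

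For the embedding I reuse the cascading scheme: reserve disjoint subsets $W_1,\ldots,W_k\subseteq W$ of size $n/\log n$ each (with $k=2\log n/\log\log n$), and set $W_0=V(G)\setminus(X\cup W_1\cup\ldots\cup W_k)$. Unmatched vertices are processed exactly as in Lemma~\ref{lemma:S_almost_spanning} with the parameter $d=\Delta-1$, which is legitimate because our $p$ exceeds $(n^{-1}\log^3 n)^{1/(\Delta-1)}$. Each pair $(h_i,h_{i+1})\in M$ is embedded jointly, placed onto an edge $(u,u')\in G[W_j]$ of the first reservoir $W_j$ that contains a free edge satisfying $L_i\subseteq N_G(u)$ and $L_{i+1}^{\mathrm{pre}}\subseteq N_G(u')$; here $L_i$ and $L_{i+1}^{\mathrm{pre}}$ are the $\phi$-images of the $H$-neighbours of $h_i$ and $h_{i+1}$ placed before the pair is processed. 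Because $h_ih_{i+1}\in H$ and the earlier endpoint $h_i$ has $h_{i+1}$ as a forward $H^*$-neighbour, the total number of $G$-edges constraining the pair is $|L_i|+|L_{i+1}^{\mathrm{pre}}|+1\le(\Delta-1)+(\Delta-1)+1=2\Delta-1$, giving expected number $\sim n^2 p^{2\Delta-1}=\log^6 n$ of candidate edges in each $W_j$.

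The cascading analysis then proceeds by induction on $j$, showing that the numbers of singletons and of pairs forced into $W_j$ both decay geometrically, so neither stream reaches $W_k$. For singletons this is exactly Lemma~\ref{lemma:expansion} with $d=\Delta-1$. For pairs I will need an edge-version of Lemma~\ref{lemma:expansion}: for a family of pairwise disjoint constraint pairs $\{(L_\alpha,L'_\alpha)\}$ with $|L_\alpha|+|L'_\alpha|\le 2\Delta-2$, the graph $G[W]$ contains many edges $(u,u')$ with $L_\alpha\subseteq N_G(u)$ and $L'_\alpha\subseteq N_G(u')$ for some $\alpha$. This is a standard Chernoff/Janson-type computation on $G_{n,p}$, directly analogous to Lemma~\ref{lemma:expansion}, and Lemma~\ref{lemma:distance} applied with a slightly larger parameter $k$ gives the required far-apart subfamily of pairs. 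The main obstacle will be the structural step — producing $M$ and the ordering \emph{simultaneously} so that pairs are consecutive and no unmatched vertex becomes a sink — but the reverse-BFS construction above handles this cleanly provided one treats separately the components that consist entirely of important vertices.
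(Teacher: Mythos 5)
Your structural decomposition is essentially the same as the paper's (remove one edge from each component of $H\setminus S$ consisting entirely of $H$-degree-$\Delta$ vertices, then order the rest by reverse search so that every remaining vertex has a forward neighbour), but your realisation of it has a bug: a \emph{reverse BFS} from $v$ with $u$ as the last-discovered child does \emph{not} place $u$ immediately before $v$. In the reversed BFS order all of $v$'s other children $c_1,\ldots,c_{k}$ sit between $u$ and $v$, so unless $\deg_{H^*}(v)=1$ your pairs are not consecutive and the ``process jointly'' step is ill-defined (a neighbour of $v$ placed between $u$ and $v$ would be committed without enforcing adjacency to $\phi(v)$). A reverse \emph{DFS} with $u$ taken as the first child of $v$ fixes this, since then the DFS suffix ``$\ldots,u,v$'' reverses to ``$v,u,\ldots$'', giving $u$ immediately before $v$ after reversal; alternatively the paper's layering $(D_{\ell-1},\ldots,D_0)$ is equivalent and avoids the issue.

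The real divergence from the paper is in how the matching edges are placed, and this is where the gap lies. The paper \emph{removes} the matching $M$, embeds $H\setminus M$ with the singleton cascade of Lemma~\ref{lemma:S_almost_spanning} (with $d=\Delta-1$) while reserving a \emph{linear-sized} set $W_1\subseteq W$, and then places all of $M$ in one shot via Lemma~\ref{lemma:expansion_edges}, a Janson-plus-maximality argument whose error probability is small enough to union-bound over all ``leftover'' sets $W'$. You instead fold the pairs into the cascade, placing each onto a free edge of the first reservoir $W_j$ (of size $n/\log n$) that admits one. The constraint count $(\Delta-1)+(\Delta-1)+1=2\Delta-1$ is right and does match $p^{-(\Delta-1/2)}$, but the claim that the pair cascade is ``directly analogous'' to Lemma~\ref{lemma:expansion} is not justified. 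Concretely, a pair constraint has only $\Theta(\log^{O(1)}n)$ candidate edges in a reservoir of size $n/\log n$, whereas a singleton constraint has polynomially many candidate vertices; and a single occupied vertex of $W_{j-1}$ now blocks on the order of $|W_{j-1}|p$ edges rather than a single vertex. After the singleton cascade the number of occupied vertices in $W_1$ is already of order $\log n/p^{\Delta-1}$, which for $\Delta=3$ is $n^{4/5+o(1)}$, comparable to the $|W_{j-1}|p^{\Delta-1}\approx n^{1/5+o(1)}$ vertices that can cover one side of a pair constraint. The deterministic ``covering minus occupied'' bookkeeping that drives the singleton cascade therefore does not close for pairs, and one would need a genuinely different accounting (or separate reservoirs plus a concentration/adversarial-union argument of the type Lemma~\ref{lemma:expansion_edges} is designed to provide). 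As written, the pair cascade is asserted rather than proved, and this is the missing idea; the paper's one-shot Janson over a reserved linear set is precisely how it sidesteps this.
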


%\AF{are we losing with the probability only because of the matching? couldn't you remove short cycles/long induced paths?} \RN{See the comment below}
We use the following strategy in the proof of Lemma \ref{lemma:delta_S_embedding}: First remove a carefully chosen matching from $H \setminus S$ such that there exist an ordering $(h_1, \ldots, h_m)$ of the remaining vertices satisfying the condition of Lemma \ref{lemma:S_almost_spanning} with $d = \Delta - 1$. Embed these vertices using Lemma \ref{lemma:S_almost_spanning} and put back the matching using Lemma \ref{lemma:expansion_edges} stated below. Note that the lower bound on $p$ is the best possible (up to the logarithmic factor) given the condition $|N_H(h, S)| \le \Delta - 1$: if there exists an edge $hh' \in H \setminus S$ such that both $h$ and $h'$ have $\Delta - 1$ neighbours in $S$, then we need $p = \tilde \Omega(n^{-1/(\Delta - 1/2)})$ just to embed this one edge.

While, in principle, we could have stated more complicated conditions which would allow one to obtain better bounds on $p$, we opted not to do so for following reasons: (i) this is the simplest statement which allows us to go below $n^{-1/\Delta}$ in Theorem \ref{thm:spanning}, (ii) any more complicated condition would further obscure the preparation of a graph $H$ in the proof of Theorem \ref{thm:spanning}, and (iii) this approach cannot improve $p$ past $n^{-1/(\Delta - 1)}$. While the bound of $n^{-1/(\Delta - 1)}$ could be achieved by requiring that each connected component of $H \setminus S$ contains a cycle (see the proof of the main result from \cite{conlon17almost}), attaining such a condition in the proof of Theorem \ref{thm:spanning} would be difficult. On top of it, there are other places in the proof where going below $n^{-1/(\Delta - 1/2)}$ would require new ideas. More on this will be said in the last section.

As remarked before, the following lemma helps us to finish off the embedding in the proof of Lemma \ref{lemma:delta_S_embedding}. %Again, the choice for the size of $W$ and $W'$ are such that they can be applied in the proof of Lemma \ref{lemma:delta_S_embedding}.
The proof combines Lemma \ref{lemma:connecting} and a standard application of Janson's inequality. As it is somewhat technical and does not introduce new ideas, we leave it to the appendix.

\begin{lemma} \label{lemma:expansion_edges}
Let $\Delta \ge 2$ be an integer and $\alpha \in \mathbb{R}$ positive constants. Given a subset $W \subseteq [n]$ of size $|W| \ge \alpha n$, if
$$
	p \ge \left(n^{-1}\log^3 n \right)^{1/(\Delta - 1/2)}
$$
then $G = \Gnp$ w.h.p has the following property: For every subset $W' \subseteq [n] \setminus W$ and every family $\{(A_i, B_i)\}_{i \in [t]}$ of pairs of subsets $A_i, B_i \subseteq [n] \setminus (W \cup W')$ of size $|A_i| = |B_i| = \Delta - 1$ such that
	\begin{itemize}
		\item $2 t \le |W'|$ and
		\item no vertex of $G$ appears in more than $\Delta$ pairs,
	\end{itemize}
there exists a family of vertex-disjoint edges $\{x_i y_i \in G[W \cup W']\}_{i \in [t]}$ such that $A_i \subseteq N_G(x_i)$ and $B_i \subseteq N_G(y_i)$ for every $i \in [t]$.
\end{lemma}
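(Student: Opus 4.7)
The plan is to package each desired edge $x_i y_i$ together with its prescribed neighborhoods $A_i, B_i$ as an anchored copy of a small graph $F$, and then invoke Lemma~\ref{lemma:connecting}. Let $F$ be the graph on $2\Delta$ vertices $\{u, v, a_1, \ldots, a_{\Delta-1}, b_1, \ldots, b_{\Delta-1}\}$ with edge set $\{uv\} \cup \{ua_j : 1 \le j \le \Delta-1\} \cup \{vb_j : 1 \le j \le \Delta-1\}$ (a single edge with $\Delta - 1$ pendants at each endpoint), and let $\mathbf{x}^{*} = (a_1, \ldots, a_{\Delta-1}, b_1, \ldots, b_{\Delta-1})$. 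Then $\mathbf{x}^{*}$ is independent in $F$ and $|\mathbf{x}^{*}| = v(F) - 2$. A short case analysis over subgraphs $F' \subseteq F$ with either $\mathbf{x}^{*} \subseteq V(F')$ or $\mathbf{x}^{*} \cap V(F') = \emptyset$ shows that the maximum ratio $e(F')/(v(F') - \max\{1, |V(F') \cap \mathbf{x}^{*}|\})$ is attained at $F' = F$, giving $m(F, \mathbf{x}^{*}) = (2\Delta - 1)/2 = \Delta - 1/2$. So the hypothesis on $p$ is exactly what is required to apply Lemma~\ref{lemma:connecting} for this $(F, \mathbf{x}^{*})$.

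The main technical difficulty is that Lemma~\ref{lemma:connecting} requires the anchors to be pairwise disjoint, whereas here a vertex of $G$ may lie in up to $\Delta$ of the pairs $(A_i, B_i)$. To get around this, I would form the conflict graph $\mathcal{C}$ on $[t]$ with $ij \in E(\mathcal{C})$ whenever $(A_i \cup B_i) \cap (A_j \cup B_j) \neq \emptyset$. Each of the $2(\Delta - 1)$ vertices of $A_i \cup B_i$ lies in at most $\Delta - 1$ other pairs, so $\Delta(\mathcal{C}) \le 2(\Delta - 1)^2$, and a greedy proper coloring uses at most $C := 2(\Delta - 1)^2 + 1$ colors. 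Within each color class, the anchor sets are pairwise disjoint and hence form a valid input for Lemma~\ref{lemma:connecting}.

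I would then process the color classes sequentially, maintaining a set $\mathrm{Used} \subseteq W \cup W'$ of vertices already chosen as edge endpoints. Within each color class, process the pairs in batches: at the start of a batch set $W^{*} := (W \cup W') \setminus \mathrm{Used}$, select an unprocessed sub-family of at most $|W^{*}|/8 = |W^{*}|/(4(v(F) - |\mathbf{x}^{*}|))$ pairs from the current class, view them as disjoint $r$-tuples $\mathbf{y}_i := (A_i, B_i) \in ([n] \setminus W^{*})^{2(\Delta-1)}$, and apply Lemma~\ref{lemma:connecting} with pool $W^{*}$ to produce an $(F, \mathbf{x}^{*}, \{\mathbf{y}_i\})$-matching in $G$. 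The internal vertices $u_i, v_i$ of each copy then lie in $W^{*}$, span an edge of $G$, and by construction satisfy $A_i \subseteq N_G(u_i)$ and $B_i \subseteq N_G(v_i)$; add them to $\mathrm{Used}$ and continue. The crucial invariant is that the total number of used vertices is always at most $2t \le |W'|$, so $|W^{*}| \ge |W \cup W'| - |W'| = |W| \ge \alpha n$ throughout. Since each batch consumes at most a quarter of $|W^{*}|$, only a constant number of batches per class is needed, and the $C = O(\Delta^2)$ color classes together produce the desired vertex-disjoint family $\{x_i y_i\}$.

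One subtlety worth flagging is that the pool $W^{*}$ fed to Lemma~\ref{lemma:connecting} depends on $W'$ (which is universally quantified in the statement) and on the progress of the algorithm. I would therefore invoke Lemma~\ref{lemma:connecting} in its uniform form ``w.h.p.\ for \emph{every} linear-sized pool $W^{*}$ the required matching exists'': the extra $\log^3 n$ factor in $p$ drives the failure probability for any fixed $W^{*}$ well below $2^{-n}$, so a union bound over the at most $2^n$ choices of $W^{*}$ delivers the uniform version. This coloring-plus-iteration step is the only place where the argument goes beyond a direct application of Lemma~\ref{lemma:connecting}, and is the expected main obstacle.
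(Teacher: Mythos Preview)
Your plan---package each $(A_i,B_i)$ as an anchored copy of the double-star $F$ and feed colour classes through Lemma~\ref{lemma:connecting}---is natural, but the step you flag at the end is a genuine gap rather than a technicality. The uniform-in-$W^*$ form of Lemma~\ref{lemma:connecting} is actually \emph{false} for this particular $(F,\mathbf{x}^*)$: for a fixed anchor $\mathbf{y}$ the expected number of anchored copies in all of $[n]$ is only $n^2 p^{2\Delta-1}=\Theta(\log^6 n)$, so w.h.p.\ an adversary can delete one internal vertex from each copy and obtain a set $W^*$ of size $n-O(\mathrm{polylog}\,n)\ge\alpha n$ in which $\mathbf{y}$ has no anchored copy whatsoever. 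Hence the per-$W^*$ failure probability is nowhere near $2^{-n}$, and no union bound over $W^*$ can succeed. Since your $W^*$ depends on the adversarial $W'$ and on vertices consumed by earlier batches, and since $2t$ may far exceed $|W|$ (so you cannot retreat to the fixed pool $W$ either), the argument does not close.

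The paper's proof is organised precisely to avoid an adversarial pool. It fixes in advance a partition $W=W_0\cup W_1\cup\dots\cup W_{3\Delta^3}$ and colours $[t]$ into $3\Delta^3$ classes with pairwise-disjoint anchors (essentially your colouring, plus a further refinement by $|A_i\cap B_i|$). The heart of the matter is a direct Janson computation (Claim~\ref{claim:matching_fill}): for any linear-size $U$ and any family of $t'\ge n/\log n$ disjoint pairs, \emph{some} pair admits its edge inside $G[U]$, with failure probability $e^{-\Omega(t'\log n)}$. Here $\mu\asymp t'\,|U|^2 p^{2\Delta-1}=\Theta(t'\,\mathrm{polylog}\,n)$ is large because one only needs one success among $t'$ attempts, and this is enough to beat the union bound over both $U$ and the family. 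Maximality of a greedy matching in $W_0\cup W'$ therefore leaves at most $n/\log n$ leftover pairs per colour class, and those few are finished off by Lemma~\ref{lemma:connecting} applied with the \emph{pre-fixed} buffer $W_k$, where the lemma is valid as stated.

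A minor secondary point: you tacitly assume $A_i\cap B_i=\emptyset$; otherwise $\mathbf{y}_i$ has repeated coordinates and $F$ must be taken with the corresponding vertices identified. This is the reason the paper also refines the colour classes by $|A_i\cap B_i|$ and invokes Lemma~\ref{lemma:connecting} for each resulting $F$ separately.
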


With Lemma \ref{lemma:expansion_edges} at hand, we finish the proof of Lemma \ref{lemma:delta_S_embedding}.

\begin{proof}[Proof of Lemma \ref{lemma:delta_S_embedding}]
	Let $W \subseteq [n]$ be a subset of size $|W| \ge \alpha n$. Without loss of generality we may assume $\alpha \le \gamma$, as otherwise we can simply take a subset of $W$ of size $\gamma n$. Let $W_0, W_1 \subseteq W$ be disjoint subsets of size $|W_i| = \alpha n / 2 \le \gamma n / 2$. Then $G = \Gnp$ w.h.p has the following properties:
	\begin{itemize}
		\item the property of Lemma \ref{lemma:S_almost_spanning} with $\Delta - 1$ as $d$ and $W_0$ as $W$,
		\item the property of Lemma \ref{lemma:expansion_edges} with $W_1$ (as $W$).	
	\end{itemize}
	We show that these properties imply that $G$ satisfies the property of the lemma. To this end, consider a subset $X \subseteq V(G) \setminus W$, a graph $H \in \calH(n - |X| - \gamma n, \Delta)$ and a subset $S \subseteq V(H)$ which satisfy conditions of the lemma.

	% Let $M_1, \ldots, M_{3\Delta^2} \subseteq [n] \setminus V_S$ be disjoint subsets of size $\alpha n$, where $\alpha := \eps / 6\Delta^2$, and set $M := \bigcup_{i = 1}^{3 \Delta^2} M_i$. Suppose that a graph $G$ on the vertex set $[n]$ has the following properties:
	% \begin{enumerate}[(i)]		
	% 	\item the property of Proposition \ref{prop:expansion_edges} with $\Lambda_u := 24\Delta^2/\alpha$ (as $\Lambda$), $\eps/3$ (as $\mu$) and $V(G)$ (as $S$),
	% 	\item the property of Proposition \ref{prop:expansion_edges} with $4$ (as $\Lambda$), $1/4$ (as $\mu$) and $M_i$ (as $S$), for every $1 \le i \le 3\Delta^2$,
	% 	\item $G \setminus M$ has the property of Lemma \ref{lemma:S_almost_spanning} with $\eps/4$ (as $\eps$), $\Delta - 1$ (as $d$) and $V_S$.
	% \end{enumerate}
	% Note that $G \sim \Gnp$ a.a.s has these three properties for $p$ as stated in the lemma (with $C$ being sufficiently large). We show that such $G$ satisfies the desired property.

	\medskip
	\noindent
	\textbf{Prepare $H$.} 
	Let $\{H_i \subseteq H \setminus S\}_{i \in \calI}$ be the family of all connected components in $H \setminus S$ with the property that $\deg_H(w) = \Delta$ for all $w \in H_i$. From each $H_i$ choose one arbitrary edge $a_i b_i\in H_i$ (this is possible since $|N_H(w, S)| \le \Delta - 1$ for all $w \in H \setminus S$) and set $M = \bigcup_{i \in \calI} \{a_i, b_i\}$ and $H' = H \setminus M$. We now show that there exists an ordering $(h_1, \ldots, h_m)$ of $V(H') \setminus S$ such that
	\begin{equation} \label{eq:degree_satisfied}
		|N_H(h_i, S \cup \{h_1, \ldots, h_{i-1}\})| \le \Delta - 1
	\end{equation}
	for every $1 \le i \le m$. Indeed, let $D_0 \subseteq V(H') \setminus S$ denote the set of all vertices having a neighbour in $M$ (in particular, they all have degree at most $\Delta-1$ in $H\setminus M$), and inductively define $D_j = N_{H'}(D_{j-1}) \setminus (S \cup \bigcup_{j' < j} D_{j'})$ for every $j \ge 1$. Clearly, if $D_j = \emptyset$ for some $j$ then $D_{j+k} = \emptyset$ for every $k \ge 1$. Moreover, as each connected component of $H' \setminus S$ contains a vertex of degree at most $\Delta - 1$ in $H'$ (that is, it contains a vertex from $D_0$) we have $V(H') \setminus S = \bigcup_{j \ge 0} D_j$. In addition, since the $D_j$'s are disjoint, it follows that there exists a smallest $\ell \ge 0$ such that $D_\ell = \emptyset$, and therefore, $V(H') \setminus S = \bigcup_{j = 0}^{\ell-1} D_j$. Now, observe that the ordering $(D_{\ell-1}, D_{\ell-2}, \ldots, D_0)$ of $V(H') \setminus S$ has the property that each vertex which is not in $D_0$, has a neighbour `to the right' and therefore at most $\Delta - 1$ neighbours `to the left' (even if we add $S$ at the beginning of the ordering). As vertices in $D_0$ have degree at most $\Delta - 1$ in $H\setminus M$, by arbitrarily ordering vertices within each $D_j$, each vertex of $D_0$ has at most $\Delta-1$ neighbours `to the left' (again, even if we add $S$ at the beginning of the ordering). Therefore, we obtain a desired ordering of $V(H') \setminus S$.

	\medskip
	\noindent
	\textbf{Embed $H$.}
	Consider an arbitrary injective mapping $\phi' \colon S \rightarrow V(G) \setminus W$. From $v(H') + |M| \le v(H) \le n - |X| - \gamma n$ we have
	\begin{equation} \label{eq:W_1_bound}
		v(H') \le n - (|X| + |W_1|) - \gamma n / 2.
	\end{equation}
	Owing to \eqref{eq:degree_satisfied}, we can apply Lemma \ref{lemma:S_almost_spanning} with $X \cup W_1$ (as $X$) and $W_0$ (as $W$) to obtain an $S$-embedding $\phi \colon H' \hookrightarrow_S G \setminus (X \cup W_1)$ which extends $\phi'$. Let $W' := V(G) \setminus (X \cup W_1 \cup \phi(H'))$ be the set of `unused' vertices. For each $i \in \calI$  set $A_i := \phi(N_H(a_i) \setminus b_i)$ and $B_i := \phi(N_H(b_i) \setminus a_i)$. Observe that $|A_i| = |B_i| = \Delta - 1$ as otherwise we would not remove the edge $a_i b_i$ in the first place. Moreover, as
	$$
		|M| \le n - |X| - \gamma n - v(H') = |W'| + |W_1| - \gamma n \le |W'|
	$$
	we conclude $2 |\calI| = |M| < |W'| - \gamma n / 2$. Therefore, we can apply Lemma \ref{lemma:expansion_edges} to obtain a family $\{x_i y_i \in G[W' \cup W_1]\}_{i \in \calI}$ such that
	$$
		\phi(N_H(a_i) \setminus \{b_i\}) \subseteq N_G(x_i) \qquad \text{ and } \qquad \phi(N_H(b_i) \setminus \{a_i\}) \subseteq N_G(y_i)
	$$
	for every $i \in \calI$. By setting $\phi(a_i) := x_i$ and $\phi(b_i) := y_i$ for each $i \in \calI$ we obtain a desired $S$-embedding $\phi \colon H \hookrightarrow_S G \setminus X$.
\end{proof}

%!TEX root = spanning.tex
\section{Universality for $d$-degenerate graphs}
\label{sec:degenerate}

In this section we prove Theorem \ref{thm:spanning_deg}. The proof demonstrates some of the main ideas and serves as a warm up towards the more difficult proof of Theorem \ref{thm:spanning}.

A brief overview of the proof strategy was already given at the beginning of Section \ref{sec:S_embedding}. We now give a more detailed description on how to embed one particular $d$-degenerate graph $H$ and then use this approach to show the desired universality result.

\begin{enumerate}
	\item First, we choose a subset of vertices $D \subseteq V(H)$ of degree at most $2d$ which will be embedded at the end. As the average degree of every $d$-degenerate graph is at most $2d$ and the maximum degree is bounded by a constant $\Delta$, we have many choices for such vertices. In particular, we choose $D$ in such a way that every two vertices in it are far apart (note that $D$ is an independent set). Next, for each $w \in D$ choose a small subset $S_w \subset V(H)$ such that $S_w$ contains the vertex $w$ and its neighbourhood and no vertex outside of $S_w$ sends more than $d$ edges into $S_w$. Importantly, the $S_w$'s are chosen in such a way that each induced graph $H[S_w]$ is isomorphic to the same (connected) graph $F^*$ and each $w$ has the same role in $F^*$ (say, it has the role of a vertex $z^* \in F^*$). By virtue of being connected, we have that each vertex in $S_w$ is `close' to $w$ (as $S_w$ is small) and, since every two vertices in $D$ are far apart, $S_w$'s are pairwise disjoint.

	\item Embed $|D|$ copies of $F = F^* \setminus z^*$ into $G$ using Lemma \ref{lemma:factor}. Note that we can associate each such copy with an induced graph $H[S_w \setminus \{w\}]$, for some $w \in D$ (see Figure \ref{fig:phase1}). In other words, we embed a subgraph $H[S]$ where $S := \bigcup_{w \in D} S_w \setminus \{w\}$.  Furthermore, we put aside a subset of vertices $X \subseteq V(G)$ which are not being used by these copies. Together with the fact that at this point all neighbours of the vertices from $D$ are embedded, this will help us to finish off an embedding.

	\begin{figure}[h!]
		\centering
		\subfloat[First phase (step 2.)]{\raisebox{0.48cm}{\includegraphics[scale=0.9]{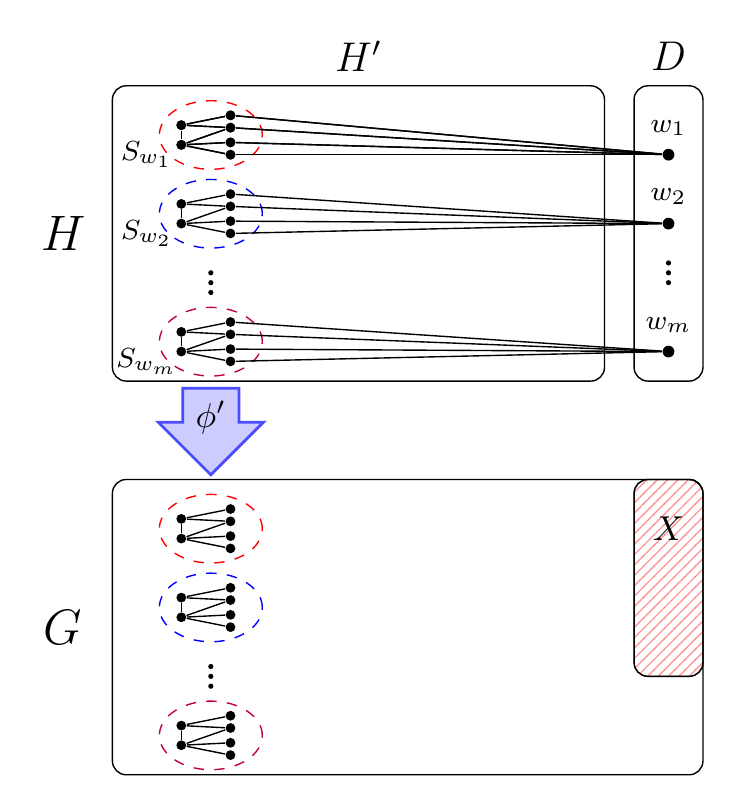}} \label{fig:phase1}}
		\hspace{1.5cm}
		\subfloat[Second phase (step 3.)]{\includegraphics[scale=0.9]{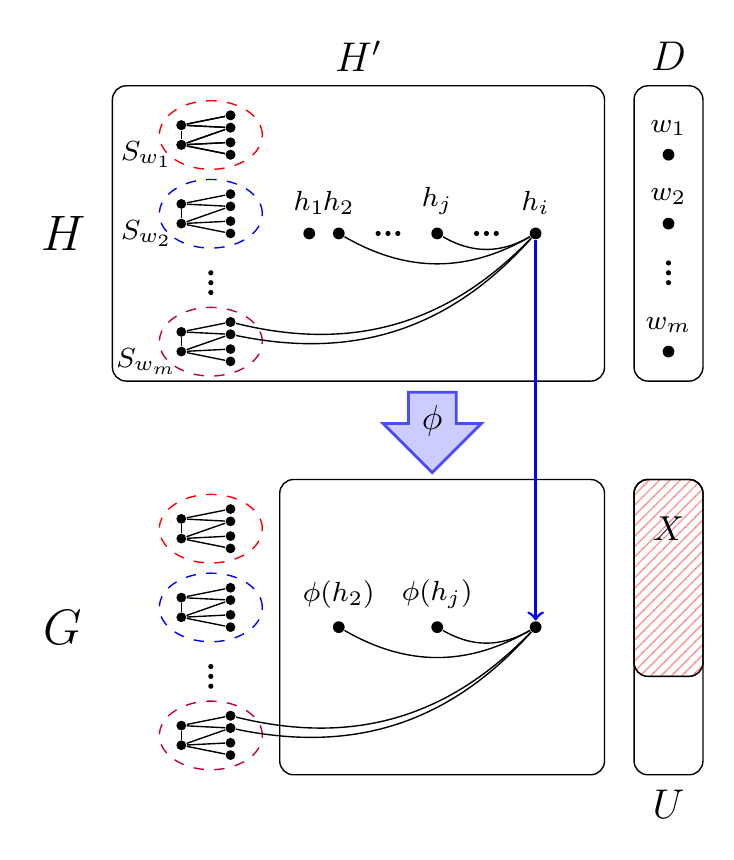} \label{fig:phase2}}
		\caption{Embedding a $2$-degenerate graph $H$.}
	\end{figure}

	\item Next, we wish to extend an embedding of $H[S]$ into an embedding $\phi$ of $H' := H \setminus D$ while avoiding the set $X$ (see Figure \ref{fig:phase2}). This is done by using Lemma \ref{lemma:S_almost_spanning} with $2d$ (as $d$): Since vertices in $D$ are far apart each vertex outside of $S$ can be adjacent to at most one set $S_w$. Therefore, owing to the property that no vertex outside of $S$ sends more than $d$ edges into any $S_w$ we conclude that no vertex sends more than $d$ edges into $S$. This is the main reason why we could not choose $S_w$ to be only the neighbourhood of $w$. As $H' \setminus S$ is $d$-degenerate we conclude there exists an ordering which satisfies the assumption of Lemma \ref{lemma:S_almost_spanning} with $2d$ (as $d$).

	\item Finally, we use the expansion properties described in Section \ref{sec:expansion_aux} to show that $\calB(\calL, U)$ has a perfect matching, where $\calL := \{\phi'(N_H(w)) \colon w \in D\}$ and $U$ is the set of unoccupied vertices in $G$. Consequently, this will imply the existence of an embedding of $H$ into $G$ (see Section \ref{sec:aux_bip}). The fact that $\calL$ is determined in the first phase of the embedding procedure will come in very handy for proving the universality result.

	Note that here we heavily rely on the fact that each $N_H(w)$ is of size at most $2d$, thus we can apply the lemmata from Section \ref{sec:expansion_aux} with $p = \tilde \Omega(n^{-1/2d})$. Finally, note that the role of the set $X$ here is similar to the role of the set $W$ in the statement of Lemma \ref{lemma:S_almost_spanning}: as we have no control on how the subgraph $H' \setminus S$ is being embedded, it could happen that all the vertices which are candidates for some $w \in D$ are already used. If this happens then we have no chance to finish our embedding. Having the set $X$ fixed in advance guarantees that this will not be the case.
\end{enumerate}

We remark that the main difference between the proofs of Theorem \ref{thm:spanning_deg} and Theorem \ref{thm:spanning} lies in Step 4 (which, consequently, makes other steps more difficult as well). In particular, if $H$ is a $\Delta$-regular graph then we necessarily have $|N_H(w)| = \Delta$ and in order to use expansion properties from Section \ref{sec:expansion_aux} we need $p = \tilde \Omega(n^{-1/\Delta})$ which is exactly the bound we aim to break. 

% which harm us in a `vertex-by-vertex' embedding (note that if $p$ is around $n^{-1/2d}$ then one cannot expect to find a common neighbour for a fixed subset of size large than $2d$). Importantly, each such subgraph contains the whole neighbourhood of some vertex $h \in H$ (and $h$ itself). We first embed each such subgraph without the corresponding vertex $h$ and use expansion properties of typical graphs sampled from $\Gnp$ to show that no matter how we embed $H'$ we are still able to place remaining $h$'s to form an embedding.  Note that this is one of the main differences between the proof of Theorem \ref{thm:spanning} and Theorem \ref{thm:spanning_deg}, as in the former we will not be able to build an absorbing structure in this way. Then we embed $H'$ using the vertex-by-vertex embedding scheme described in Lemma \ref{lemma:S_almost_spanning} and, finally, make use the `absorbing' properties to complete it into an embedding of the whole graph $H$.

The following technical definition captures the main properties of subsets $S_w$ described in step 1.

\begin{definition} \label{def:degenerate}
	Let $d, K\in \mathbb{N}$, let $F^*$ be a graph and $z^* \in V(F^*)$. Define $\calD_d(F^*, z^*,K)$ to be the family of all graphs $H$ for which the following holds: there exist a set $D \subseteq V(H)$ of size at least $|V(H)| / K$ and a family of subsets $\{S_w\}_{w \in D}$ (where each $S_w\subseteq V(H)$) such that the following is true,
	\begin{enumerate}[(D1)]
		\item for each $w \in D$ we have $\{w\} \cup N_H(w) \subseteq S_w$ and $|N_H(w)| \le 2d$ and
		\item there exists an isomorphism $f_w \colon H[S_w] \hookrightarrow F^*$ which maps $w$ to $z^*$;
		\item $S_w \cap S_{w'} = \emptyset$ and there are no edges between $S_w$ and $S_{w'}$, for every $w \neq w' \in D$;		
		\item for each vertex $w \in V(H) \setminus \bigcup_{w \in D} S_w$ we have $|N_H(w) \cap \bigcup_{w \in D} S_w| \le d$.
	\end{enumerate}
\end{definition}

In the following lemma we show that for every $d$-degenerate graph $H$ there exists a small graph $F^*$, a relatively small constant $K$ and $z^*\in V(F^*)$ such that $H\in \calD_d(F^*,z^*,K)$.

\begin{lemma} \label{lemma:F_star_lemma}
	Let $d, \Delta \in \mathbb{N}$. Then there exists $K = K(d, \Delta)$ such that for every $d$-degenerate graph $H$ with maximum degree at most $\Delta$ there exists a $d$-degenerate graph $F^*$ with at most $5d^2$ vertices and a vertex $z^* \in F^*$ such that $H \in \calD_d(F^*, z^*,K)$.
\end{lemma}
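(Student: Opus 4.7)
The plan is to pick a large set of low-degree vertices in $H$, grow around each a bounded ``closed'' neighborhood, group them by local isomorphism type, and then thin out so that the chosen neighborhoods are far apart. Since $H$ is $d$-degenerate we have $e(H) \le d \cdot v(H)$, so the average degree is at most $2d$. A short counting argument (if $x$ vertices have degree exceeding $2d$, then the sum of degrees is at least $(2d+1)x$, which must be bounded by $2d \cdot v(H)$) yields at least $v(H)/(2d+1)$ vertices of degree at most $2d$; call this set $L$.

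For each $v \in L$ I will construct $S_v$ by the following closure process. Initialize $S := \{v\} \cup N_H(v)$, so $|S| \le 2d+1$. As long as there is a vertex $u \notin S$ with more than $d$ neighbors inside $S$, add such a $u$ (breaking ties by a fixed vertex labeling). Each addition increases $|S|$ by one and $e(H[S])$ by at least $d+1$; starting with $e(H[S]) \ge 0$, after $t$ additions we have $t(d+1) \le e(H[S]) \le d|S| = d(2d+1+t)$, which yields $t \le 2d^2 + d$, and hence $|S_v| \le 2d^2 + 3d + 1 \le 5d^2$ (the bound is trivial for $d = 1$, since $S_v^{(0)}$ is a star in a forest and no outside vertex can have two neighbors inside without creating a cycle). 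By the terminating condition, every $u \notin S_v$ has at most $d$ neighbors inside $S_v$. Moreover, every vertex of $S_v$ lies within distance $r := 2d^2 + d + 1$ of $v$ in $H$, since each added vertex attaches to some already-placed one.

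Since $|S_v| \le 5d^2$ there are only $C = C(d)$ possible isomorphism types for the rooted graph $(H[S_v], v)$, so by pigeonhole some type $(F^*, z^*)$ is shared by a subset $L' \subseteq L$ of size at least $|L|/C$, and $F^*$ inherits $d$-degeneracy from $H$. Applying Lemma \ref{lemma:distance} with $k := 2r + 3$ to $L'$, I obtain $D \subseteq L'$ with pairwise distance at least $k$ in $H$ and $|D| \ge |L'|/\Delta^{k+1}$. Setting $K := (2d+1)\, C\, \Delta^{k+1}$, a constant depending only on $d$ and $\Delta$, gives $|D| \ge v(H)/K$. Conditions (D1) and (D2) are immediate from the construction and the common type. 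For (D3), a shared vertex or an edge between $S_w$ and $S_{w'}$ would force $d_H(w, w') \le 2r + 1 < k$, contradicting the choice of $D$; for (D4), if an external vertex $u$ were adjacent to two distinct sets $S_w$ and $S_{w'}$, this would force $d_H(w, w') \le 2r + 2 < k$, so $u$ meets at most one $S_w$, inside which it has at most $d$ neighbors by the closure property. The main obstacle is keeping $|S_v|$ bounded during the closure process; the rest is a routine pigeonhole followed by Lemma \ref{lemma:distance}.
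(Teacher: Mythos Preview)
Your proof is correct and follows essentially the same approach as the paper: identify many low-degree vertices, run the closure process $S_v := \{v\} \cup N_H(v)$ plus iteratively absorb vertices with more than $d$ neighbours inside, bound its termination via $d$-degeneracy edge counting, and then combine pigeonholing on the rooted isomorphism type with Lemma~\ref{lemma:distance}. The only cosmetic difference is the order: the paper thins by distance first and pigeonholes afterwards, whereas you pigeonhole first and thin afterwards; since both steps lose only a constant factor depending on $d,\Delta$, the order is immaterial.
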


\begin{proof}
In the calculations used throughout the proof we shall often use very generous estimates as they only influence the constant $K$ which is not very important for our purposes.

Let $n = v(H)$ denote the number of vertices of $H$. First, we show that $H$ contains at least $n/3d$ many vertices of degree at most $2d$. Let $W$ denote the set of all such vertices. Clearly,
$$
	2e(H)=\sum_{v\in W}\deg_H(v)+\sum_{v\notin W}\deg_H(v)\geq (2d+1)(n-|W|).
$$
On the other hand, as $H$ is $d$-degenerate it has at most $d n$ many edges. Combining those two bounds we obtain
$$2d n \geq  (2d+1)(n-|W|),$$
which, after rearranging, gives
$$|W| \geq n/(2d+1) \ge n/3d.$$
Second, let $k=20d^2$ and let $D'\subseteq W$ be a largest subset of vertices such that every two are of distance at least $k$. Lemma \ref{lemma:distance} guarantees that $D'$ is of size
$$
	|D'| \ge |W| / \Delta^{k+1} \geq \frac{n}{3 d \Delta^{k+1}}.
$$

Next, for each vertex $w\in D'$ we define a graph $S_w$ as follows:
Start with $S_w:=\{w\}\cup N_H(w)$ and as long as there exists a vertex $v\in V(H)$ with $|N_H(v) \cap S_w | \geq d+1$ pick such a vertex and update $S_w:=S_w\cup\{v\}$. Note that after $t$ steps the set $S_w$ is of size $t+ \deg_H(v)+1$ and $H[S_w]$ contains at least $\deg_H(v) + (d+1)t$ edges. Moreover, as every subgraph of a $d$-degenerate graph is clearly $d$-degenerate, it follows that $H[S_w]$ contains at most $d|S_w|=d(t+\deg_H(v)+1)$ edges. Thus from
$$
 	\deg_H(v) + (d+1) t \le d (t + \deg_H(v) + 1)
$$
and $\deg_H(v) \le 2d$ we conclude $t \le 2d^2$. In other words, the above process  terminates after at most $2d^2$ steps thus each $S_w$ is of size at most $2d^2+\deg_H(v)+1 \leq 3d^2$ (which holds for every $d\geq 3$). Moreover, by the construction we have that each $S_w$ is connected. Therefore, we conclude that for every $w\neq w'$ we have
 $S_w\cap S_w'\neq \emptyset$ as otherwise there exists a path of length at most $|S_w|+|S_{w'}| \le 6d^2 < k$ between $w$ and $w'$, contradicting $w, w' \in D'$. Similarly, we conclude that there is no edge between $V(S_w)$ and $V(S_{w'})$ for $w \neq w' \in D'$.

Finally we are ready to define the desired $D\subseteq D'$. Note that there are at most $c := 2^{9d^4} 3d^2$ different labelled (and rooted) graphs on at most $3d^2$ vertices. As $|S_w| \le 3 d^2$ for every $w \in D'$, by the pigeon-hole principle there exists a subset $D \subseteq D'$ of size
$$
	|D| \ge |D'|/ c \ge n / (3d\Delta^{k+1} c) = n / K
$$
such that all the graphs $S_w$ (for $w \in D$) are exactly the same and have the same role of the vertex $w$.

In order to complete the proof it remains to show that properties $(D1)-(D4)$ hold. Properties $(D1)-(D3)$ are clear from the construction. Regarding $(D4)$, note that if a vertex $v \in V(H)\setminus \left(\bigcup_{w\in D}S_w\right)$ has neighbours in  two distinct $S_w,S_{w'}$, then there exists a path of length at most $6d^2+1<k$ between $w$ and $w'$, which contradicts the definition of $D'$. Therefore, for every vertex $v\in V(H)\setminus \left(\bigcup_{w\in D}S_w\right)$ we have that $N_H(v)$ intersects at most one $S_w$ and by the construction of $S_w$ we have that this intersection is of size at most $d$. This completes the proof.
\end{proof}

The next lemma is the heart of the matter. It states that w.h.p a random graph $G = \Gnp$ contains all graphs in $\mathcal H(n,\Delta,d)$ which are in the same family $\calD_d(F^*,z^*,K)$ (where $F^*,z^*$ and $K$ are fixed). %Then, using the previous lemma, a simple union bound proves Theorem \ref{thm:spanning}.

\begin{lemma} \label{lemma:univ_deg_F}
	Let $d, \Delta \in \mathbb{N}$ be such that $d \le \Delta$. Given a positive constant $K$, a graph $F^*$ with at most $3d^2$ vertices and $z^* \in V(F^*)$, if $p \ge (n^{-1}\log^3 n)^{\frac{1}{2d}}$ then $\Gnp$ is w.h.p universal for the family of graphs $\calH(n, \Delta, d) \cap \calD_d(F^*, z^*, K)$.
\end{lemma}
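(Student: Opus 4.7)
The plan is to realise the four-phase strategy outlined at the start of this section in a way robust enough for universality: fix $G=G_{n,p}$ and make one global choice of an $F$-matching, a reserve set $X^\ast$, a helper set $W$, and isomorphism data \emph{before} any particular $H$ is presented, and then embed every $H\in\calH(n,\Delta,d)\cap\calD_d(F^\ast,z^\ast,K)$ on top of these fixed objects. Put $F:=F^\ast\setminus\{z^\ast\}$, $s:=v(F^\ast)$, $\delta:=\deg_{F^\ast}(z^\ast)\le 2d$; since $F$ is $d$-degenerate one has $m_1(F)\le 2d$, so the hypothesis $p\ge(n^{-1}\log^3 n)^{1/(2d)}$ is enough to invoke, w.h.p.~simultaneously, Lemma~\ref{lemma:factor} (in its $(1-\eps)n/v(F)$ strengthening) to obtain an $F$-matching $\calM=\{F_j\}_{j\in[M]}$ with $M\ge(1-\eps)n/(s-1)$; Lemma~\ref{lemma:S_almost_spanning} with $2d$ in place of $d$ and helper set $W$; Lemma~\ref{lemma:expansion} with $X^\ast$ as the target set $U$ (and $d$ replaced by $\delta$); and Lemma~\ref{lemma:expansion_reverse} applied to the fixed family $\calL^\ast$ defined momentarily. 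Here $X^\ast,W\subseteq V(G)\setminus V(\calM)$ are disjoint of constant density $\asymp 1/K$, and for each $j\in[M]$ I fix an isomorphism $g_j:F\to F_j$ and set $L_j^\ast:=g_j(N_{F^\ast}(z^\ast))$; the family $\calL^\ast:=\{L_j^\ast\}$ is then a predetermined family of pairwise disjoint $\delta$-subsets of $V(G)$.

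Given $H$ with its data $(D,\{S_w\}_{w\in D})$, I would first pick any injection $\sigma:D\hookrightarrow[M]$ (possible since $|D|\le n/s<M$) and define a partial embedding $\phi'$ on $S:=\bigcup_{w\in D}(S_w\setminus\{w\})$ by $\phi'|_{S_w\setminus\{w\}}:=g_{\sigma(w)}\circ f_w$; by construction $\phi'$ realises $H[S]$ honestly inside $V(\calM)$ and $\phi'(N_H(w))=L_{\sigma(w)}^\ast\in\calL^\ast$. Next, the $d$-degeneracy of $H$ gives an ordering of $V(H)\setminus(S\cup D)$ in which each vertex has at most $d$ preceding neighbours inside the same set, while property~(D4) contributes at most $d$ additional neighbours in $S$; prepending $S$ yields an ordering of $V(H\setminus D)\setminus S$ satisfying the hypothesis of Lemma~\ref{lemma:S_almost_spanning} with $2d$ in place of $d$. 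Applying that lemma with $X=X^\ast$ extends $\phi'$ to an $S$-embedding of $H\setminus D$ into $G\setminus X^\ast$, which together with the honest embedding of $H[S]$ by $\phi'$ gives an embedding of $H\setminus D$ into $G$ disjoint from $X^\ast$. The size condition $v(H\setminus D)\le n-|X^\ast|-\gamma n$ holds since $|D|\ge n/K>|X^\ast|+\gamma n$ for our choice of constants.

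The remaining step is to embed $D$, which by Section~\ref{sec:aux_bip} reduces to finding a perfect matching in $\calB_G(\calL,U)$, where $U:=V(G)\setminus\phi(V(H\setminus D))$ has size $|D|$ and contains $X^\ast$, and $\calL:=\{L_w^\ast:w\in D\}\subseteq\calL^\ast$ is a family of pairwise disjoint $\delta$-subsets (disjointness following from the vertices of $D$ being at pairwise distance $\ge 2$ in $H$). Hall's criterion (Theorem~\ref{thm:hall}) is then verified on both sides. On the $\calL$-side, for $\calL'\subseteq\calL$ with $|\calL'|\le|D|/2$ I bound $|N_\calB(\calL',U)|\ge|N_\calB(\calL',X^\ast)|$ and split into the three regimes of Lemma~\ref{lemma:expansion}---$|\calL'|\le p^{-\delta}$, intermediate (handled by the monotone trick of shrinking $\calL'$ to size $p^{-\delta}$), and $|\calL'|\ge\log n\cdot p^{-\delta}$---each yielding the bound $\ge|\calL'|$. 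On the $U$-side, for $U'\subseteq U$ with $|U'|\le|D|/2$ I appeal to Lemma~\ref{lemma:expansion_reverse} for the \emph{fixed} family $\calL^\ast$, absorbing the deficiency $|\calL^\ast\setminus\calL|\le M-|D|$ by taking the expansion parameter $\lambda$ in that lemma much smaller than $1/K$.

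The main obstacle---and the reason for the preparatory gymnastics---is that $\calL$ and $U$ depend on the unknown $H$, while the expansion lemmata are w.h.p.\@ statements for \emph{pre-specified} families and target sets. The plan resolves this by making every object that feeds into an expansion lemma in advance: the $F$-matching $\calM$ and the isomorphisms $g_j$, and hence the umbrella family $\calL^\ast$, are fixed before $H$ is revealed so that $\calL\subseteq\calL^\ast$ for every $H$, and $X^\ast$ is reserved upfront as a ``landing strip'' with guaranteed expansion into $\calL^\ast$. With these choices fixed, the varying $H$ only selects a subfamily from $\calL^\ast$ and the Hall-type verification becomes routine expansion bookkeeping for pre-specified data.
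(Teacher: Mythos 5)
Your overall scaffolding—fix an $F$-matching, reserve a landing strip $X^\ast$ and a helper set $W$ before $H$ is presented, note that $\calL$ is always a subfamily of the pre-determined family $\calL^\ast$, then embed $H[S]$, then $H\setminus D$ via Lemma~\ref{lemma:S_almost_spanning}, then finish via Hall's criterion applied to $\calB_G(\calL,U)$—mirrors the paper's proof faithfully. However, you drop the multiple-exposure trick and run everything in a single copy of $G_{n,p}$, and this creates a circularity that the paper's sprinkling is specifically there to break. The expansion statements you lean on are probability statements about \emph{pre-specified} data: Lemma~\ref{lemma:expansion} is proved for a fixed target set $U$ with failure probability only $O(1/n)$, so it cannot be union-bounded over the exponentially many candidates for $X^\ast$; Lemma~\ref{lemma:expansion_reverse} is proved for a fixed family $\calL$, and the number of families of disjoint $d$-subsets is $n^{\Theta(n)}$, far beyond what any Chernoff-type failure probability can absorb. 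Yet your $X^\ast$, $W$, and most critically $\calL^\ast$ are all defined \emph{after} $G$ is revealed—$\calL^\ast=\{g_j(N_{F^\ast}(z^\ast))\}$ is determined by the $F$-matching you found inside $G$—so you are asking the same random object to simultaneously produce the matching and then satisfy an expansion property quantified over data that depends on it. That is not licensed by either lemma.

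The issues with $X^\ast$ and $W$ could be patched by choosing them before exposing any edges and running the $F$-matching on the complement, but no such workaround exists for $\calL^\ast$: the images $L_j^\ast$ genuinely depend on where the copies of $F$ landed. The paper resolves exactly this by writing $G=G_1\cup G_2\cup G_3$ with independent $G_{n,q}$'s, finding the $F$-matching (hence $\calL$) and choosing $X$ inside $G_1$, running Lemma~\ref{lemma:S_almost_spanning} on fresh randomness $G_2$, and only then invoking Lemmas~\ref{lemma:expansion} and~\ref{lemma:expansion_reverse} on the still-unexposed $G_3$, for which $\calL$ and $X$ are now legitimately fixed. Without that (or some alternative decoupling, e.g.\ two-round exposure), the Hall verification at the end of your argument is not justified, so as written the proof has a genuine gap.
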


Using Lemma \ref{lemma:F_star_lemma} and Lemma \ref{lemma:univ_deg_F} we easily finish the proof of Theorem \ref{thm:spanning_deg}.

\begin{proof}[Proof of Theorem \ref{thm:spanning_deg}]
	Let $K$ be a constant given by Lemma \ref{lemma:F_star_lemma}. It follows from Lemma \ref{lemma:univ_deg_F} and the union bound that $G = \Gnp$ is w.h.p universal for the family of graphs
	$$
		\calH(n, \Delta, d) \cap \left[ \bigcup_{\substack{F^* \\ z^* \in F^*}} \calD_d(F^*, z^*, K) \right],
	$$
	where the union bound goes over all pairs $(F^*, z^*)$ where $F^*$ is a $d$-degenerate graph with at most $3d^2$ vertices and $z^* \in V(F^*)$. From Lemma \ref{lemma:F_star_lemma} we have that each $H \in \calH(n, \Delta, d)$ belongs to some $\calD_d(F^*, z^*, K)$, which implies $G$ is $\calH(n, \Delta, d)$-universal.
\end{proof}

It remain to prove Lemma \ref{lemma:univ_deg_F}.

\begin{proof}[Proof of Lemma \ref{lemma:univ_deg_F}] Let $q\in (0,1)$ be such that $(1-q)^3=1-p$ and observe that $q\approx p/3$. We generate $G = \Gnp$  as the union $G=G_1\cup G_2\cup G_3$ where $G_i = G_{n,q}$ for each $i \in \{1,2,3\}$. This is usually referred to as the \emph{multiple exposure} trick or \emph{sprinkling} (e.g.\ see \cite{JLR} for details). Let $F = F^* \setminus z^*$ and $\Gamma = N_{F^*}(z^*)$. From (D1) and (D2) we conclude
$$
	|\Gamma| = u \le 2d.
$$
Moreover, using the assumption that $F$ is $d$-degenerate and of size at most $3d^2$, one can easily verify that $m_1(F) \le d$ (to prove it, one should use the simple fact that a $d$-degenerate graph on $x$ vertices has at most $d(x-d)+\binom{d}{2}$ edges). Therefore, from Lemma \ref{lemma:factor} we have that for our choice of $K$, a graph $G_1 = G_{n,q}$ w.h.p contains a family of
$$
	t = n /K < n / 4 v(F)
$$
vertex-disjoint copies of $F$, each of which is equipped with an embedding $\tau_i \colon F \hookrightarrow G_1$ (where $i \in [t]$). Let $\calL = \{ \tau_i(\Gamma) \}_{i \in [t]}$ be the set of all images of $\Gamma$ into $G_1$ and consider an arbitrary subset $X \subseteq [n] \setminus \bigcup_{i \in [t]} \tau_i(F)$ of size $3t/4$. Next, a graph $G_2 = G_{n,q}$ w.h.p satisfies the property of Lemma \ref{lemma:S_almost_spanning} for $W = [n] \setminus (X \cup \bigcup_{i \in [t]} \tau_i(F))$ and sufficiently small $\alpha, \gamma > 0$ (we will clarify this later in the description of Phase 2). Moreover, from Lemma \ref{lemma:expansion} we have that $G_3 = G_{n,q}$ w.h.p has the following property for every $\calL' \subseteq \calL$:
$$	
	\big| N_{\calB_1}(\calL') \big| \ge \begin{cases}
		 (1 - \lambda)|\calL'||X| p^{u}, &\text{ if } |\calL'| \le \lambda / p^u, \\
		 (1 - \lambda)|X|, &\text { if } |\calL'| \ge \log n / p^u,
	\end{cases}
$$	
where $\calB_1 = \calB_{G_3}(\calL, X)$. In addition, by Lemma \ref{lemma:expansion_reverse} we obtain that $G_3$ w.h.p has the following property for every $U' \subseteq [n] \setminus \calL$:
$$	
	\big| N_{\calB_2}(U') \big| \ge \begin{cases}
		 (1 - \lambda)|U'||\calL| p^{u}, &\text{ if } |U'| \le \lambda / p^u, \\
		 (1 - \lambda)|\calL|, &\text { if } |U'| \ge \log n / p^u,
	\end{cases}
$$
where $\calB_2 = \calB_{G_3}(\calL, [n] \setminus \calL)$. We show that $G_1 \cup G_2 \cup G_3$ is universal for the family of graphs $\calH(n, \Delta, d) \cap \calD(F^*, z^*)$.

\medskip
\noindent
\textbf{Embed $H$.} Consider any graph $H \in \calH(n, \Delta, d) \cap \calD(F^*, z^*,K)$ and let $D \subseteq V(H)$ and $\{S_w\}_{w \in D}$ be subsets as ensured by Definition \ref{def:degenerate}, with $|D|=t$. We wish to describe an embedding $\phi \colon H \hookrightarrow G$ using only the pseudorandom properties of $G$ mentioned above, which clearly implies the universality. We construct the desired embedding in three phases: In Phase 1 we embed $H[S]$ (where $S := \bigcup_{w \in D} \left(S_w \setminus \{w\}\right)$). Then, in Phase 2 we extend it into and embedding of $H' := H[S \cup R]$ (where $R := V(H) \setminus (S \cup D)$). Finally, in Phase 3 we embed the remaining vertices $D$. The formal details are given bellow:

{\bf Phase 1.} Let $\{\tau_w\}_{w \in D}$ be an arbitrary labelling of $\{\tau_i \colon F \hookrightarrow G_1\}_{i \in [t]}$. Define $\phi_1 \colon H[S] \hookrightarrow G_1$ as the union of all $\tau_w$'s, that is, for each $w \in D$ and $v \in S_w \setminus \{w\}$ set
$$
	\phi_1(v) := \tau_w(f_w(v)).
$$
Recall that $f_w \colon H[S_w] \hookrightarrow F^*$ is an isomorphism which maps $w$ to $z^*$, given by property (D2). Note that this indeed defines an embedding of $H[S]$ as it follows from (D2) and (D3) that each $H[S_w \setminus \{w\}]$ is isomorphic to $F$. Moreover, from the choice of $W$ we have $W \cap (X \cup \phi_1(S)) = \emptyset$.

{\bf Phase 2.} Note that by the property (D4) we have that all vertices $h \in R$ have at most $d$ neighbours in $S$. As $H[R]$ is $d$-degenerate itself, there exists an ordering $(h_1, \ldots, h_m)$ of $R$ such that
$$
	|N_H(h_i, S \cup \{h_1, \ldots, h_{i-1}\})| \le 2d \qquad \text{ for every } 1 \le i \le m.
$$
In particular, from the assumption that $G_2$ satisfies the property of Lemma \ref{lemma:S_almost_spanning} for $W$ and $W \cap (\phi_1(S) \cup X) = \emptyset$ (that is, so far we did not use vertices from $W$), there exists an $S$-embedding $\phi_2 \colon H[S \cup R] \hookrightarrow_S G_2 \setminus X$ which extends $\phi_1$. In particular, this implies $\phi_2 \colon H[S \cup R] \hookrightarrow (G_1 \cup G_2) \setminus X$.

\medskip
\noindent
\emph{Remark.} Note that we did not explicitly specified the values of $\alpha$ and $\gamma$ for which we apply Lemma \ref{lemma:S_almost_spanning}. The value of $\alpha$ is implicitly given by the size of $W$ (which is clearly linear in $n$), whereas the value of $\gamma$ can be derived from the fact that $H[S \cup R]$ is significantly smallest than $n - |X| = n - 3t/4$ --- in particular, we have $H' \in \mathcal{H}(n - |X| - t/4, \Delta)$ and $t$ is linear in $n$.
\medskip

{\bf Phase 3.} Finally, let $U := V(G) \setminus \phi_2(S \cup R)$ and observe that $|U| = |D|$ and $X \subseteq U$. Let $\calB = \calB_{G_3}(\{\tau_w(\Gamma)\}_{w \in D}, U)$ and note that $\calB_1 \subseteq \calB$. In order to finish off the embedding it suffices to show that $\calB$ contains a perfect matching. Indeed, assuming this is true let $\xi \colon D \rightarrow U$ be a bijection such that for each $w \in D$ we have that $\tau_w(\Gamma)$ is connected to $\xi(w)$ in $\calB$. That is,
$$
	\phi_2(N_H(w)) = \tau_w(f_w(N_H(w))) \subseteq N_{G_3}(\xi(w))
$$
for all $w\in D$. Therefore, by defining $\phi$ to be an extension of $\phi_2$ to $D$ by setting $\phi(w) = \xi(w)$ for every $w \in D$ we get an embedding of $H$ in $G$.

In order to show that $\calB$ contains a perfect matching we verify Hall's condition (Theorem \ref{thm:hall}). First, consider a family $\calL' \subseteq \calL$ of size at most $\log n / p^u$. If $\calL'$ is larger than $\lambda / p^u$ then let $\calL'' \subseteq \calL'$ be an arbitrary family of size $\lambda / p^{u}$, and otherwise let $\calL'' = \calL$. Then
$$
	|N_\calB(\calL')| \ge |N_{\calB_1}(\calL'')| \ge |\calL''||X|p^u.
$$
As $|X|p^u > \log^2 n$ and $|\calL''| \ge \lambda |\calL'| / \log n$, we obtain the desired inequality $|N_\calB(\calL')| \ge |\calL'|$. Second, for a family $\calL' \subseteq \calL$ of size $\log n / p^u \le |\calL'| \le t / 2$ we have
$$
	|N_{\calB}(\calL')| \ge |N_{\calB_1}(\calL')| \ge (1 - \lambda)|X| \ge (1 - \lambda)3t / 4 > t / 2,
$$
as required. The same calculation shows $|N_\calB(U')| \ge |U'|$ for every $U' \subseteq U$ of size $|U'| \le t / 2$. Therefore, by Theorem \ref{thm:hall} we conclude that $\calB$ contains a perfect matching.
\end{proof}

%!TEX root = spanning.tex
\section{Universality for bounded-degree graphs}
\label{sec:main}

In this section we prove Theorem \ref{thm:spanning} which is our main result. The basic proof strategy is quite similar to the proof of Theorem \ref{thm:spanning_deg}, however the details are somewhat more complicated. The main obstacle lies in the finishing part where we need to find a perfect matching in an auxiliary bipartite graph between sets of size $\Delta$ and a set of unused vertices.  If $p= \tilde \Omega(n^{-1/\Delta})$ (which is the edge-probability we aim to beat), then it could be done in the same way as in the proof of Theorem \ref{thm:spanning_deg}. Indeed, we would then have that the expected number of common neighbours of every $\Delta$-subset is large. Even though for $p = n^{-\eps - 1/\Delta}$, for some small $\eps > 0$, we do not have the property that every $\Delta$-subset has a non-empty common neighbourhood there are certainly $\Delta$-subsets for which this is true (and in fact, there are quite a lot of them). Thus, instead of `blindly' embedding $H[S_w \setminus \{w\}]$'s (see the proof strategy described in Section \ref{sec:degenerate}) we do it in such a way that each $N_H(w) \subseteq S_w$ is being embedded into  a `good' $\Delta$-subset. Moreover, we make sure that the common neighbours of different $N_H(w)$'s are intertwined in a way that gives us similar freedom for embedding the remaining part of $H$ as we had in the proof of Theorem \ref{thm:spanning_deg}. This is accomplished by constructing an \emph{absorbing} structure for $D$ and showing that such a structure appears in $\Gnp$ (Lemma \ref{lemma:delta_absorber}). We now give a brief outline of the strategy used to embed one graph $H$ and make the previous discussion more precise.

\begin{enumerate}
	\item Similarly to the proof of Theorem \ref{thm:spanning_deg}, we first choose a subset of vertices $D \subseteq V(H)$ such that every two vertices in it are far apart (thus $D$ is an independent set). Next, for each $w \in D$ choose a small subset $S_w \subset V(H)$ such that $S_w$ contains $w$ and its neighbourhood and no vertex outside of $S_w$ sends more than $\Delta-1$ edges into $S_w$. Importantly, $S_w$'s are chosen such that each induced graph $H[S_w]$ is isomorphic to the same (connected) graph $F^*$ and each $w$ has the same role in $F^*$ (say, it has the role of a vertex $z^* \in F^*$). By virtue of being connected, we have that each vertex in $S_w$ is `close' to $w$ (as $S_w$ is small) and, since every two vertices in $D$ are far apart, $S_w$'s are pairwise disjoint and there are no edges between them.

	\item At this point we slightly diverge from the proof of Theorem \ref{thm:spanning_deg}. Recall that in the former case it was enough to find any $F$-matching in $G$ of size $|D|$, where $F := F^* \setminus z^*$, and use it to define an embedding of $H[S_w \setminus \{w\}]$'s. As mentioned before, such an approach fails here: a typical $\Delta$-subset has $n p^\Delta \to 0$ common neighbours thus if we just take an arbitrary $F$-matching then the image of a typical $N_H(w)$ will not have any common neighbours and, consequently, $G$ will contain no candidates for $w$.

	The main new ingredient is the following lemma which captures the property used to finish off the embedding in the proof of Theorem \ref{thm:spanning_deg}. We need the following definition:

	\begin{definition}[$Y$-robustness] \label{def:robust}
	Given a bipartite graph $B$ on vertex classes $Z$ and $X \cup Y$ with $|X| < |Z| < |X| + |Y|$, we say that $B$ is \emph{$Y$-robust} if for every subset $Y' \subseteq Y$ of size $|Y'| = |Z| - |X|$, $B$ contains a perfect matching between $Z$ and $X \cup Y'$.
	\end{definition}

	\begin{lemma} \label{lemma:delta_absorber}	
	There exist positive constants $\beta_1, \beta_2$ with $\beta_1 + \beta_2 < 1$ such that the following holds. Let $\Delta\geq 3$ be an integer, let $F$ be a graph with maximum degree $\Delta$ and $\Gamma \subseteq V(F)$ a subset of size $|\Gamma| \le \Delta$ such that $\deg_F(v) \le \Delta - 1$ for every $v \in \Gamma$.  Given a positive constant $\nu < (100v(F))^{-1}$,  if
	$$
		p \ge (n^{-1}\log^3 n)^{1/(\Delta - 1/2)}
	$$
	then $G = \Gnp$ w.h.p contains:
	\begin{itemize}
		\item a collection $\{\tau_i \colon F \hookrightarrow G\}_{i \in [t]}$ of vertex-disjoint embeddings of $F$, where $t = \nu n$, and
		\item disjoint subsets $X, Y \subseteq V(G) \setminus \bigcup_{i \in [t]} \tau_i(F)$ of size $|X| = (1 - \beta_1)t$ and $|Y| = (\beta_1 + \beta_2) t$,
	\end{itemize}
	such that the bipartite graph $\calB_G(\{\tau_i(\Gamma)\}_{i \in [t]}, X \cup Y)$ is $Y$-robust.
	\end{lemma}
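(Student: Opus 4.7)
My plan is to follow the absorbing-template strategy pioneered by Montgomery \cite{montgomery2014embedding}. The argument splits into two conceptually separate pieces: (i) an abstract construction of a $Y_0$-robust bipartite ``template'' graph $T$, and (ii) a random-graph embedding that realises $T$ as a subgraph of $\calB_G(\{\tau_i(\Gamma)\}_{i \in [t]}, X \cup Y)$.

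For (i), I would construct $T$ on vertex classes $Z$ of size $t$ and $X_0 \cup Y_0$ of sizes $(1-\beta_1)t$ and $(\beta_1+\beta_2)t$, of maximum degree at most a constant $D = D(\Delta)$, and $Y_0$-robust in the sense of Definition \ref{def:robust}. Such a template can be obtained by taking a sparse random bipartite graph in which each $y \in Y_0$ is connected to a bounded number of uniformly random $z$'s (and similarly for $x \in X_0$), and then verifying $Y_0$-robustness through Hall's criterion combined with a union bound over the choices of $Y_0' \subseteq Y_0$ with $|Y_0'| = \beta_1 t$ and Hall-witnesses $Z' \subseteq Z$; alternatively, one can use Montgomery's explicit expander-based construction.

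The key density calculations for (ii) leverage the hypothesis $\deg_F(v) \le \Delta - 1$ for $v \in \Gamma$. Consider the ``bundle'' gadget $F^\dagger$ consisting of $k \le D$ vertex-disjoint copies of $F$ together with a single common external vertex $u$ joined to every vertex of all $k$ copies of $\Gamma$. Its densest subgraph is $\{u\} \cup (k$ copies of $\Gamma) \cup F[\Gamma]$-edges, giving $k|\Gamma| + k\,e(F[\Gamma])$ edges on $k|\Gamma| + 1$ vertices; the hypothesis $\deg_F(v) \le \Delta - 1$ on $\Gamma$ yields $e(F[\Gamma]) \le |\Gamma|(\Delta-1)/2$, so the density is at most $(\Delta+1)/2 \le \Delta - 1/2$ for $\Delta \ge 2$. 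The rooted density $m(F^\dagger, \{u\})$ obeys the same bound, so Lemma \ref{lemma:connecting} applies. A parallel analysis for the ``private-externals'' gadget $F^\ddagger$ (one copy of $F$ plus $k$ private new vertices each joined to $\Gamma$) gives density at most $\Delta - 1/2$ provided $k \le (\Delta-1)^2$, so Lemma \ref{lemma:factor} applies at the same threshold.

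To perform the embedding, I would realise the full ``blow-up'' graph $H^*$ of $T$ --- obtained by replacing each $z_i \in Z$ with a copy of $F$ and joining every external vertex $u$ to all of $\Gamma_i$ whenever $(z_i, u) \in E(T)$ --- inside $G$. One proceeds by splitting $G = G_1 \cup \cdots \cup G_{D+1}$ via multiple exposure and fixing an edge-colouring of $E(T)$ into $D$ matchings $M_1, \ldots, M_D$. In the first phase apply Lemma \ref{lemma:factor} in $G_1$ with a gadget tailored to $M_1$ so as to realise all the edges of $M_1$ at once, thereby committing to the copies $\tau_i$; then in each successive $G_{c+1}$ use Lemma \ref{lemma:connecting} with $F^\dagger$ rooted at its single external vertex, with the root-family ranging over the shared external vertices appearing in $M_c$ and the forbidden set equal to all previously used vertices. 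Since $M_c$ is a matching, the root-family is disjoint, and the density bounds above guarantee $p \ge (n^{-1}\log^3 n)^{1/(\Delta-1/2)}$ suffices. Once $H^*$ is embedded, the template $T$ sits inside $\calB_G(\{\tau_i(\Gamma)\}, X \cup Y)$, and its $Y_0$-robustness transfers directly to the required $Y$-robustness. The main obstacle is coordinating the $F$-copies across phases: each $z_i$ must receive the \emph{same} copy of $F$ in every phase in which it occurs, and distinct phases must not collide on internal vertices. Both issues are handled by carefully sequencing the iterations so that each $\tau_i$ is committed at the first phase in which $z_i$ appears, by restricting later applications of Lemma \ref{lemma:connecting} to root-tuples that include the already-embedded $\Gamma_i$, and by consistently passing the set of used vertices as the forbidden set.
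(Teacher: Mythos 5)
Your high-level plan — use a Montgomery-style $Y$-robust bipartite template $T$, build a ``blow-up'' $H^*$ by substituting copies of $F$ into the $Z$-side of $T$, and embed $H^*$ into $\Gnp$ via Lemma~\ref{lemma:factor} and Lemma~\ref{lemma:connecting} — is correct in outline and matches the paper. However, the edge-colouring/multi-exposure scheme you propose for realising $H^*$ has a gap that is not merely technical: it reintroduces exactly the $\Delta$-common-neighbour barrier the lemma is meant to overcome. Concretely, a vertex $z_i \in Z$ necessarily has degree at least $2$ in any $Y$-robust $T$ (in Montgomery's template every $z$ has degree $40$; even after reduction the paper keeps degree $3$), so the edges at $z_i$ lie in several of your matchings $M_c$. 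Once you ``commit to $\tau_i$'' at the first matching in which $z_i$ appears, every later matching containing $z_i$ must attach a \emph{fresh} external vertex $u$ to the already-fixed set $\Gamma_i = \tau_i(\Gamma)$, a set of up to $\Delta$ vertices. That is the task of finding a common neighbour of $\Delta$ fixed vertices, and at $p = (n^{-1}\log^3 n)^{1/(\Delta-1/2)}$ the expected number of such common neighbours is $np^{\Delta} \to 0$. Your proposed patch --- folding $\Gamma_i$ into the root-tuple for later calls to Lemma~\ref{lemma:connecting} --- fails for the same reason at the level of the density parameter: for $\mathbf{x} = \Gamma_i$ and $F' = \Gamma_i \cup \{u\}$ one has $m(F', \mathbf{x}) = |\Gamma_i|$, which can equal $\Delta > \Delta - 1/2$, so the hypothesis of Lemma~\ref{lemma:connecting} is violated.

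The paper sidesteps this precisely by arranging that no new external vertex is ever attached to an already-fixed $\Gamma_i$. First, the degree of $Z$-vertices in $T$ is reduced from $40$ to $3$ by a local ``splitting'' operation (Lemma~\ref{lemma:max_deg_3}), and then $T$ is replaced by its $11$-subdivision (Lemma~\ref{lemma:subdivision}); this has the crucial effect that the three neighbours of each $z \in Z$ become \emph{private} internal path vertices. Consequently each $z$ together with its three neighbours forms a vertex-disjoint gadget $F_\Gamma^+$ (a copy of $F$ plus three new vertices joined to $\Gamma$), for which $m_1(F_\Gamma^+) \le \Delta - 1/2$ (Lemma~\ref{lemma:F_plus_m1}), so all of these can be embedded simultaneously by Lemma~\ref{lemma:factor}. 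The leftover pieces of $T$ are then $F_\Gamma$-paths whose anchors are \emph{single vertices} (not $\Gamma$-sets), again with rooted density at most $\Delta - 1/2$ (Lemma~\ref{lemma:F_path_m}), so Lemma~\ref{lemma:connecting} applies. Your bundle gadget $F^\dagger$ and private-externals gadget $F^\ddagger$ are in the right spirit, but your density calculations also only check one candidate densest subgraph (for instance, for $F^\dagger$ you should also consider $\{u\}$ together with one or more full copies of $F$), whereas the paper's Lemma~\ref{lemma:F_plus_m1} and Lemma~\ref{lemma:F_path_m} carry out the full case analysis. The essential missing idea in your proposal is the splitting-plus-subdivision step that decouples the $Z$-vertices from each other; without it, the coordination problem you flag at the end of your proposal is not a detail but a genuine obstruction.
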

	
	The proof of Lemma \ref{lemma:delta_absorber} is quite involved thus we postpone it until the end of this section.

	Having Lemma \ref{lemma:delta_absorber} at hand, we apply it with $F = F^* \setminus z^*$ and $\Gamma = N_F(z^*)$ to obtain an $F$-matching. We then associate such copies of $F$ with $H[S_w \setminus \{w\}]$'s which defines an embedding of $H[S]$, where $S := \bigcup_{w \in D} S_w \setminus \{w\}$ (see Figure \ref{fig:phase_bound1}).

	\begin{figure}[h!]
		\centering
		\subfloat[First phase (step 2.)]{\raisebox{0.48cm}{\includegraphics[scale=0.9]{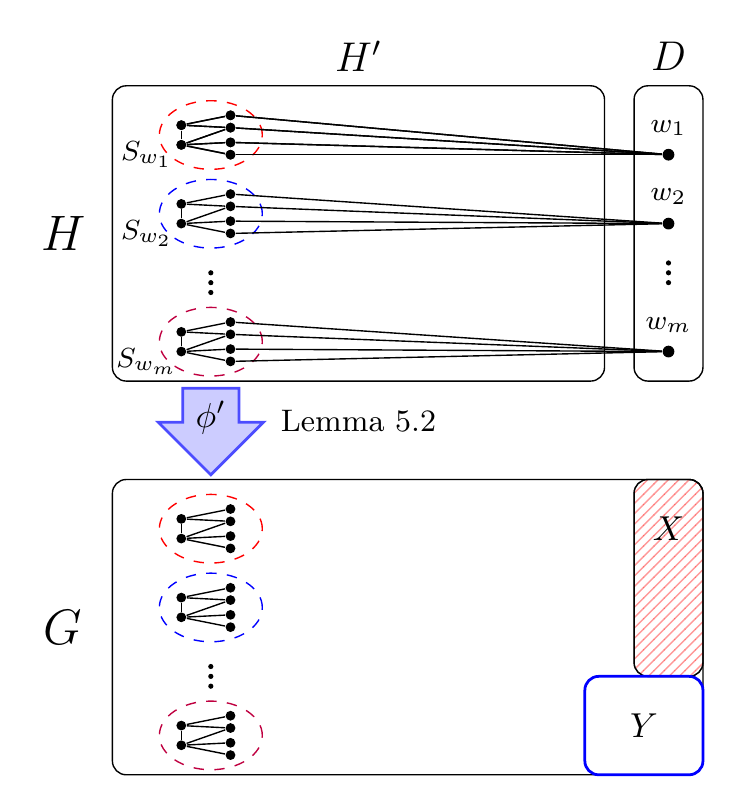}} \label{fig:phase_bound1}}
		\hspace{1.5cm}
		\subfloat[Second phase (step 3.)]{\includegraphics[scale=0.9]{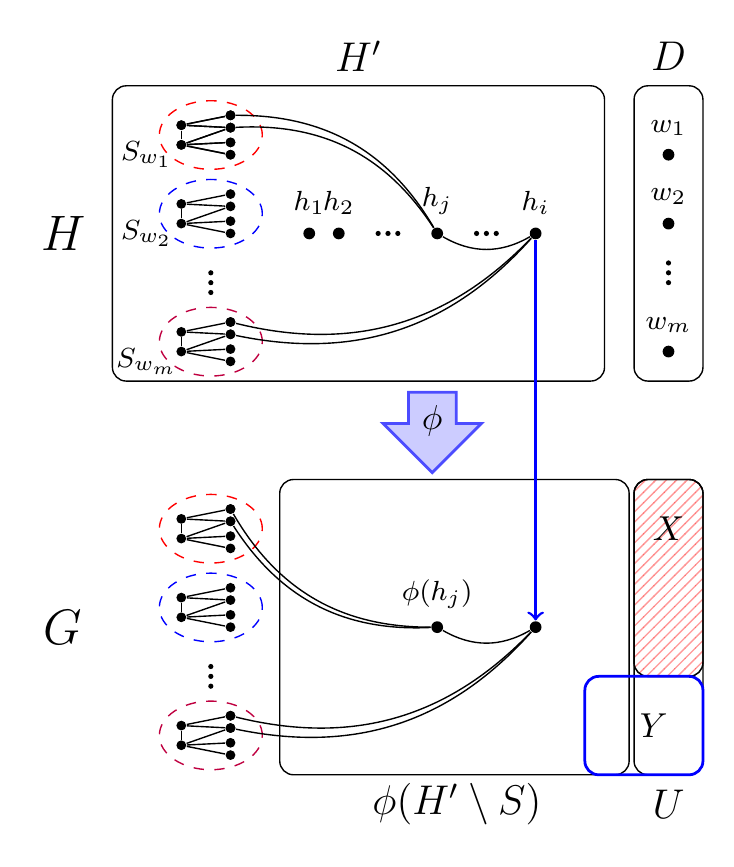} \label{fig:phase_bound2}}
		\caption{Embedding a graph $H$ with maximum degree $3$.}
	\end{figure}

	\item Next, we extend an embedding of $H[S]$ into an embedding $\phi$ of $H' := H \setminus D$. Unlike in the proof of Theorem \ref{thm:spanning_deg} where it was enough to embed it in such a way that $X$ is being avoided, here we have a more difficult task: in order to make use of Lemma \ref{lemma:delta_absorber}, it is not enough to avoid $X$ but we also need to use \emph{all} the vertices which are not in $X \cup Y$ (see Figure \ref{fig:phase_bound2}). This is done by further splitting $H'$ into two parts and applying Lemma \ref{lemma:delta_S_embedding} to embed each of them. For now we omit the details on how exactly this can be done.

	\item Finally, assuming we can extend an embedding of $H[S]$ into an embedding of $H'$ such that the set $U \subseteq V(G)$ of unused vertices satisfies $X \subseteq U \subseteq X \cup Y$, we finish the embedding by finding a perfect matching in the auxiliary graph $\calB_G(\calL, U)$, where $\calL = \{\phi(N_H(w)) \colon w \in D\}$. The existence of such a perfect matching follows immediately from $Y$-robustness property of $\calB_G(\calL, X \cup Y)$.
\end{enumerate}

In the rest of the section we make this strategy precise. We start by introducing few lemmata and definitions akin to those in Section \ref{sec:degenerate}.

%Before we get to the point where we can say something more meaningful about this

%The main difference between the proofs is that here we want to `inject' an `absorbing type' structure into our target graph $H$ which will ensure the existence of such a matching at the end (with some flexibility).

The following is a version of Definition \ref{def:degenerate} tailored to the proof of Theorem \ref{thm:spanning}. There are two differences: (i) we do not explicitly specify a bound on the degree of $w$ in (D1) as, in general, we will not be able to use any bound  better  than $\Delta$; and (ii) the bound in (D4) is slightly stronger.

\begin{definition} \label{def:bounded_deg}
	Let $\Delta, K\in \mathbb{N}$, let $F^*$ be a graph with $\Delta(F^*) \leq \Delta$ and let $z^* \in V(F^*)$. Define $\calD_\Delta(F^*, z^*, K)$ to be the family of all graphs $H$ for which the following holds: there exist a set $D \subseteq V(H)$ of size at least $|V(H)| /K$ and a family of subsets $\{S_w\}_{w \in D}$ (where each $S_w\subseteq V(H)$) such that the following is true:
	\begin{enumerate}[(D1)]
		\item for each $w \in D$ we have $\{w\} \cup N_H(w) \subseteq S_w$ and
		\item there exists an isomorphism $f_w \colon H[S_w] \hookrightarrow F^*$ which maps $w$ to $z^*$;
		\item $S_w \cap S_{w'} = \emptyset$ and there are no edges between $S_w$ and $S_{w'}$, for every $w \neq w' \in D$;
		\item for each vertex $w \in V(H) \setminus \bigcup_{w \in D} S_w$ we have $|N_H(w) \cap \bigcup_{w \in D} S_w| \le \Delta - 1$.
	\end{enumerate}
\end{definition}

Again, as in Section \ref{sec:degenerate}, we show that each $\calH(n, \Delta)$ can be covered with $\calD_\Delta(F^*, z^*,K)$'s where $F^*$ is a small graph.

\begin{lemma} \label{lemma:F_star_bounded}
	Let $\Delta \in \mathbb{N}$. Then there exists $K = K(\Delta)$ such that for every graph $H$ with maximum degree $\Delta$ there exists a graph $F^*$ with at most $2\Delta$ vertices and a vertex $z^* \in V(F^*)$ such that $H \in \calD_\Delta(F^*, z^*, K)$.
\end{lemma}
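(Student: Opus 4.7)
The strategy closely follows Lemma~\ref{lemma:F_star_lemma}, the essential difference being that we exploit the bounded-degree condition rather than $d$-degeneracy in order to cap the size of each $S_w$ at $2\Delta$. First, I would apply Lemma~\ref{lemma:distance} with $k := 4\Delta + 2$ to extract a subset $D' \subseteq V(H)$ of size $|D'| \ge n / \Delta^{k+1}$ whose pairwise distances in $H$ are all at least $k$. For each $w \in D'$, I would then construct $S_w$ greedily: start with $S_w := \{w\} \cup N_H(w)$, and while there exists $v \in V(H) \setminus S_w$ with $|N_H(v) \cap S_w| \ge \Delta$, add such a $v$ to $S_w$. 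Since $\Delta(H) \le \Delta$, the condition $|N_H(v) \cap S_w| \ge \Delta$ forces $\deg_H(v) = \Delta$ and $N_H(v) \subseteq S_w$; the process therefore terminates precisely when $|N_H(v) \cap S_w| \le \Delta - 1$ for every $v \notin S_w$.

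The main new observation, which replaces the degeneracy-based edge count in the proof of Lemma~\ref{lemma:F_star_lemma}, is that $|S_w| \le 2\Delta$ at termination. To see this, I would track the quantity $\Sigma := e_H(S_w,\, V(H) \setminus S_w)$ throughout the process. Initially $w$ contributes $0$ to $\Sigma$ (all its neighbours lie in $S_0$), while each $u \in N_H(w)$ contributes at most $\Delta - 1$ (the edge $uw$ is internal), giving $\Sigma_0 \le \Delta(\Delta - 1)$. Whenever a vertex $v$ is absorbed, the $\Delta$ edges from $v$ to $S_w$ leave $\Sigma$ (they become internal), and no new boundary edges appear because $v$ has no neighbours outside $S_w$. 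Hence $\Sigma$ drops by exactly $\Delta$ per step, and since $\Sigma \ge 0$ throughout, at most $\Delta - 1$ vertices are ever added, so $|S_w| \le (\Delta + 1) + (\Delta - 1) = 2\Delta$. This is the one genuinely new ingredient in the proof, and where the bulk of the work lies.

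By construction each $S_w$ is connected, so every vertex of $S_w$ lies within $H$-distance $|S_w| - 1 \le 2\Delta - 1$ of $w$. Combined with $d_H(w, w') \ge k = 4\Delta + 2$ for distinct $w, w' \in D'$, this gives $S_w \cap S_{w'} = \emptyset$, no edges between $S_w$ and $S_{w'}$, and, more generally, no vertex of $V(H)$ can have neighbours in two distinct $S_w$ and $S_{w'}$ (otherwise $d_H(w, w') \le 2(2\Delta - 1) + 2 = 4\Delta < k$). Finally, the number $C = C(\Delta)$ of isomorphism types of rooted graphs on at most $2\Delta$ vertices with maximum degree at most $\Delta$ is a constant, so by pigeonhole I extract $D \subseteq D'$ of size $|D| \ge |D'|/C \ge n / K$, with $K := C \cdot \Delta^{k+1}$, on which all the rooted graphs $(H[S_w], w)$ are isomorphic to a common $(F^*, z^*)$. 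Properties $(D1)$--$(D3)$ are then immediate; $(D4)$ follows because any $v \notin \bigcup_{w \in D} S_w$ has neighbours in at most one $S_{w_0}$ with $w_0 \in D$, in which case $|N_H(v) \cap \bigcup_{w \in D} S_w| = |N_H(v) \cap S_{w_0}| \le \Delta - 1$ by the termination criterion (since $v \notin S_{w_0}$).
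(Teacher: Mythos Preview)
Your proof is correct and follows essentially the same approach as the paper. The only cosmetic difference is in bounding $|S_w|$: the paper counts internal edges of $H[S_w]$ and uses $e(H[S_w]) \le \Delta |S_w|/2$, whereas you track the edge boundary $\Sigma = e_H(S_w, V(H)\setminus S_w)$ and use $\Sigma \ge 0$; these are complementary counts (related by the handshake identity) and yield the identical bound $t \le \Delta - 1$.
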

\begin{proof}[Proof (Sketch).] The proof is more or less the same as the proof of Lemma \ref{lemma:F_star_lemma} so we only give a brief sketch and  emphasize the main differences.

Let $H$ be a graph with $n$ vertices and maximum degree at most $\Delta$. Let $D'\subseteq V(H)$ be a largest subset of vertices such that every two are at distance at least $k$ (where $k$ is chosen to be large enough). Lemma \ref{lemma:distance} guarantees that $D'$ is of size at least
$$
	|D'| \ge n / \Delta^{k+1}.
$$
Next, for each vertex $w\in D'$ we define a graph $S_w$ as follows:
Start with $S_w:=\{w\}\cup N_H(w)$ and as long as there exists a vertex $v\in V(H)$ with $|N_H(v) \cap S_w |=\Delta$, pick such a vertex and update $S_w:=S_w\cup\{v\}$. Note that after $t$ steps the set $S_w$ is of size $t+ \deg_H(v)+1$ and $H[S_w]$ contains at least $\deg_H(v) + \Delta t$ edges.
Moreover, as $\Delta(H)\leq \Delta$ we clearly have
$$
	e(H[S_w])\leq \Delta(t+\deg_H(v)+1)/2.
$$
Combining these two estimates on $e(H[S_w])$ we obtain
$$
	\deg_H(v)+\Delta t\leq \Delta(t+\deg_H(v)+1)/2
$$
which is equivalent to
$$
	t\leq \frac{\deg_H(v)(\Delta-2)+\Delta}{\Delta}.
$$
Using the fact that $\deg_H(v)\leq \Delta$, the above inequality  can only hold if $t\leq \Delta-1$. All in all, each $S_w$ is of size at most $2\Delta$.

From now on the proof goes exactly as the proof of Lemma \ref{lemma:F_star_lemma} so we omit the details.
\end{proof}

Having the previous lemma in mind, Theorem \ref{thm:spanning} reduces to the following lemma.

\begin{lemma} \label{lemma:univ_F}
	Let $\Delta\geq 3$ be an integer and let $K\in \mathbb{N}$. Let $F^*$ be a graph with at most $2\Delta$ vertices and maximum degree at most $\Delta$, and let $z^* \in V(F^*)$ be a vertex in $F^*$. If
	$$
		p \ge (n^{-1}\log^3 n)^{1/(\Delta - 1/2)}
	$$
	then $\Gnp$ is w.h.p universal for the family of graphs $\calH(n, \Delta) \cap \calD_\Delta(F^*, z^*, K)$.
\end{lemma}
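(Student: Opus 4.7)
The strategy follows the four-step outline at the start of Section \ref{sec:main}. First, use the multiple exposure trick to write $G = \Gnp$ as $G_1 \cup G_2$ with the $G_i$ independent copies of $G_{n,q}$ for a slightly smaller $q$; the constant loss in $p$ is harmless since all our lemma hypotheses are stable under multiplying $p$ by a constant (it can be absorbed into the polylog factor). Apply Lemma \ref{lemma:delta_absorber} to $G_1$ with $F := F^* \setminus z^*$, $\Gamma := N_{F^*}(z^*)$ and $\nu := 1/K$. The hypothesis $\deg_F(v) \le \Delta - 1$ for every $v \in \Gamma$ holds because each neighbour of $z^*$ in $F^*$ loses exactly one edge upon removing $z^*$, while $|V(F)| \le 2\Delta$ makes the condition $\nu < (100\,v(F))^{-1}$ easy to meet by enlarging $K$ if necessary. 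This yields w.h.p.\ a family $\{\tau_i \colon F \hookrightarrow G_1\}_{i \in [t]}$ of vertex-disjoint copies of $F$ with $t = n/K$, together with disjoint sets $X, Y \subseteq V(G) \setminus \bigcup_i \tau_i(F)$ of sizes $(1-\beta_1)t$ and $(\beta_1+\beta_2)t$ for which $\calB_{G_1}(\{\tau_i(\Gamma)\}_{i \in [t]}, X \cup Y)$ is $Y$-robust. Also condition on $G_2$ satisfying the conclusion of Lemma \ref{lemma:delta_S_embedding} for the finitely many reserved sets $W$ that will be invoked below; this holds w.h.p.\ by a union bound.

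Now fix an arbitrary $H \in \calH(n,\Delta) \cap \calD_\Delta(F^*,z^*,K)$ together with witnesses $D \subseteq V(H)$ and $\{S_w\}_{w \in D}$ from Definition \ref{def:bounded_deg}; after discarding excess elements we may assume $|D| = t$ exactly. Set $S := \bigcup_{w \in D}(S_w \setminus \{w\})$ and $R := V(H) \setminus (S \cup D)$. In \textbf{Phase 1}, embed $H[S]$ into $G_1$ by fixing a bijection $w \mapsto \tau_w$ between $D$ and the copies of $F$, and defining $\phi(v) := \tau_w(f_w(v))$ for each $v \in S_w \setminus \{w\}$, where $f_w \colon H[S_w] \to F^*$ is the isomorphism from (D2). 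By (D3) the sets $S_w \setminus \{w\}$ are pairwise disjoint with no edges between them, so this is a proper embedding of $H[S]$ into $G_1$; crucially, $\phi(N_H(w)) = \tau_w(\Gamma)$ for every $w \in D$, matching the left vertex class of the robust bipartite graph above.

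In \textbf{Phase 2}, extend $\phi$ to $R$ so that the unused set $U := V(G) \setminus \phi(V(H) \setminus D)$ satisfies $X \subseteq U \subseteq X \cup Y$. Since $|U| = t$, this forces $U = X \cup Y_1$ for some $Y_1 \subseteq Y$ of size $\beta_1 t$. To achieve this, split $R = R_1 \cup R_2$ where $R_2$ is an independent set in $H$ of size roughly $\beta_2 t$, chosen further so that every $v \in R_1 := R \setminus R_2$ still satisfies $|N_H(v, S \cup R_2)| \le \Delta - 1$ (possible via a greedy construction: by (D4) the $S$-degree of every $v \in R$ is already at most $\Delta - 1$, and the only problematic vertices are those with saturated $S$-degree and an $R_2$-neighbour, which can be steered into $R_1$). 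Then apply Lemma \ref{lemma:delta_S_embedding} twice to $G_2$:
\begin{itemize}
\item First, with reserved set $W := Y$, avoid set $V(G) \setminus (\phi(S) \cup Y)$, and already-embedded part $S$, extending $\phi$ to $H[S \cup R_2]$ and forcing $\phi(R_2) \subseteq Y$.
\item Second, with avoid set $X \cup (Y \setminus \phi(R_2))$ and already-embedded part $S \cup R_2$, extending $\phi$ to all of $V(H) \setminus D$ with image disjoint from $X \cup (Y \setminus \phi(R_2))$.
\end{itemize}
A size check against the $\gamma n$ slack of Lemma \ref{lemma:delta_S_embedding} shows that by taking $|R_2| = \beta_2 t + O(\gamma n)$ (where the extra slack is chosen so that the vertices saved by the slack lie inside $Y$ rather than outside it) the second application exactly fills $V(G) \setminus (\phi(S) \cup X \cup Y)$, so the unused set lies in $X \cup Y$ as required.

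\textbf{Phase 3} is then immediate: with $\calL := \{\phi(N_H(w)) \colon w \in D\} = \{\tau_w(\Gamma)\}_{w \in D}$ and $U = X \cup Y_1$, the $Y$-robustness of $\calB_{G_1}(\calL, X \cup Y)$ produces a perfect matching $\xi \colon D \to U$ in $\calB_{G_1}(\calL, U)$, and setting $\phi(w) := \xi(w)$ for $w \in D$ extends $\phi$ to a full embedding of $H$ into $G_1 \cup G_2 \subseteq G$. The main technical obstacle is Phase 2: Lemma \ref{lemma:delta_S_embedding} guarantees only that a prescribed set is avoided, not that its complement is fully used, so the delicate bookkeeping required to simultaneously (i) pick $R_2$ so that $\phi(R_2)$ uses precisely $\beta_2 t$ vertices of $Y$, (ii) steer the $\gamma n$ slack of Lemma \ref{lemma:delta_S_embedding} into $Y$ rather than into $V(G) \setminus (X \cup Y)$, and (iii) preserve the bound $|N_H(v, S \cup R_2)| \le \Delta - 1$ on $R_1$, is the crux of the proof. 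This parallels the role played by the set $X$ in Section \ref{sec:degenerate}, but is harder here because the common-neighbourhood expansion used there is unavailable below $p = \tilde \Omega(n^{-1/\Delta})$ and must be replaced by the rigid robustness of the absorbing structure.
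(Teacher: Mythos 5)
Your overall architecture (split into $G_1 \cup G_2$, obtain the $Y$-robust absorbing structure from Lemma \ref{lemma:delta_absorber}, embed $H[S]$ onto the $F$-copies, fill in $V(H)\setminus D$ using Lemma \ref{lemma:delta_S_embedding}, finish with $Y$-robustness) matches the paper's plan, and Phases~1 and~3 are essentially correct. However, there is a genuine gap in Phase~2 that the paper closes with an idea your proposal lacks.

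The issue is the $\gamma n$ slack that Lemma \ref{lemma:delta_S_embedding} demands: the graph to be embedded must have at most $n - |X| - \gamma n$ vertices, which means that after the embedding at least $\gamma n = \Theta(n)$ vertices in $V(G)\setminus X$ remain unoccupied, and the lemma gives \emph{no control} over which ones. In your second application the avoid set is $X \cup (Y\setminus\phi(R_2))$, so the unoccupied vertices lie in $V(G)\setminus(X\cup Y\cup\phi(S))$, which is precisely the region you need to be \emph{fully} occupied in order to conclude $X \subseteq U \subseteq X\cup Y$. Inflating $|R_2|$ to $\beta_2 t + O(\gamma n)$ does not help: the extra $\gamma n$ unused vertices are still in $V(G)\setminus(X\cup Y)$, not in $Y$. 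Your parenthetical ``the extra slack is chosen so that the vertices saved by the slack lie inside $Y$ rather than outside it'' has no supporting mechanism; Lemma \ref{lemma:delta_S_embedding} only guarantees avoidance of $X$, not coverage of $V(G)\setminus(X\cup Y)$.

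The paper's fix (Claim \ref{claim:partition_R}) is to split $R = R_1 \cup I \cup R_2$ so that $I$ is a set of \emph{isolated} vertices of $H[S\cup R_1\cup I]$, with $|I| = \Theta(n)$. One first embeds $H[S\cup R_1]$ with exactly $|I|$ slack, then maps $I$ by an arbitrary bijection onto the $|I|$ leftover vertices (legal precisely because $I$ has no edges to the already-embedded part), ensuring $V(G)\setminus X'$ is completely used; only then is $R_2$ embedded, at which point the slack lands harmlessly in $Y$ because $X'$ already contained all of $V(G)\setminus(X\cup Y)$. The construction of $I$ (vertices of $R$ at mutual distance $\ge 5$ and far from $S$, with $R_2$ taken to be their first and second neighbourhoods) is the key combinatorial idea missing from your proof, and without it Phase~2 fails.
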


\begin{proof}[Proof of Theorem \ref{thm:spanning}]
	Let $K$ be a constant given by Lemma \ref{lemma:F_star_bounded}. It follows from Lemma \ref{lemma:univ_F} and the union bound that $G = \Gnp$ is w.h.p universal for the family of graphs
	$$
		\calH(n, \Delta) \cap \left( \bigcup_{\substack{F^* \\ z^* \in F^*}} \calD_\Delta(F^*, z^*, K) \right),
	$$
	where the union bound goes over all pairs $(F^*, z^*)$ where $F^*$ is a graph with at most $2\Delta$ vertices and $z^* \in V(F^*)$. From Lemma \ref{lemma:F_star_bounded} we have that each $H \in \calH(n, \Delta)$ belongs to some $\calD_\Delta(F^*, z^*, K)$, which implies $G$ is $\calH(n, \Delta)$-universal.	
\end{proof}

It remains to prove Lemma \ref{lemma:univ_F}. The proof relies on Lemma \ref{lemma:delta_absorber} which is then proved in the next section.
%In the rest of this section we prove Lemma \ref{lemma:univ_F}. As mentioned above, the proof slightly diverges from the proof of Lemma \ref{lemma:univ_deg_F} as we need to put some extra effort in the finishing part.

\begin{proof}[Proof of Lemma \ref{lemma:univ_F}]
Let $F = F^* \setminus z^*$, $\Gamma = N_{F^*}(z^*)$ and $\nu= \min\{1/2K, (200 v(F))^{-1}\}$. Let $\beta_1,\beta_2$ be the constants given by Lemma \ref{lemma:delta_absorber}. Then the graph $G_1 = \Gnp$ w.h.p contains a collection $\{\tau_i \colon F \hookrightarrow G_1\}_{i \in [t]}$ of vertex-disjoint embeddings of $F$ and disjoint subsets $X, Y \subseteq V(G) \setminus \bigcup_{i \in [t]} \tau_i(F)$ such that $\calB = \calB_{G_1}(\{\tau_i(\Gamma)\}_{i \in [t]}, X \cup Y)$ is $Y$-robust.

Next, let $W_1 \cup W_2 \cup V' = Y$ be an arbitrary partition such that $|W_1| = |W_2| = \alpha n$ for some $\alpha \ll \beta_1 + \beta_2$. By a slight abuse of notation, we write $|W_i| = o(t)$ to denote that the size of $W_i$ can be arbitrarily small with respect to $t$ (as we can choose $\alpha$ to be arbitrarily small). The graph $G_2 = \Gnp$ w.h.p satisfies the property of Lemma \ref{lemma:delta_S_embedding} with $W_i$ (as $W$) for $i \in \{1,2\}$. We show that $G = G_1 \cup G_2$ is universal for the family of graphs $\calH(n, \Delta) \cap \calD_\Delta(F^*, z^*, K)$.

\medskip
\noindent
\textbf{Embed $H$.} Consider a graph $H \in \calH(n, \Delta) \cap \calD(F^*, z^*, K)$ and let $D$ and $\{S_w\}_{w \in D}$ be subsets as given by Definition \ref{def:degenerate}. Moreover, remove arbitrary elements from $D$ in order to make $|D| = t$. Set $S := \bigcup_{w \in D} (S_w \setminus \{w\})$ and $R := V(H) \setminus \bigcup_{w \in D} S_w$. Similarly as in the proof of Theorem \ref{thm:spanning_deg}, we proceed in three phases: In Phase 1 we embed $H[S]$ using the obtained family $\{\tau_i \colon F \hookrightarrow G_1\}_{i \in [t]}$ of vertex-disjoint copies of $F$. In Phase 2 we extend this embedding into an embedding of $H' := H[S \cup R]$ with the following requirement: we want that all the vertices in $V(G) \setminus (X \cup Y)$ and non of the vertices from $X$ to be used. This allows us to use the $Y$-robustness of $\calB$ to finish off the embedding in Phase 3 by finding a perfect matching in an auxiliary graph as before.

Let us emphasize once again the additional difficulty arising in Phase 2 of our embedding scheme: In the proof of Theorem \ref{thm:spanning_deg} we could finish the embedding of $H$ as long as no vertex from $X$ was used. Here things do not work that smooth and we have to embed $H[S \cup R]$ in such a way that not only the vertices in $X$ are not used but we also need that \emph{all} the vertices in $V(G) \setminus (X \cup Y)$ are used.

Bellow is a formal description of each of the phases.

\textbf{Phase 1.} This phase is identical to Phase 1 in the proof of Theorem \ref{thm:spanning_deg}. Let us relabel $\{\tau_i \colon F \hookrightarrow G_1\}_{i \in [t]}$ as $\{\tau_w\}_{w \in D}$ in an arbitrary way and define $\phi_1 \colon H[S] \hookrightarrow G_1$ as the union of all $\tau_w$'s. That is, for each $w \in D$ and $v \in S_w \setminus \{w\}$ we set
$$
	\phi_1(v) := \tau_w(f_w(v)).
$$
Recall that $f_w \colon H[S_w] \hookrightarrow F^*$ is an isomorphism which maps $w$ to $z^*$, given by property (D2). Note that this defines an embedding of $H[S]$. Indeed, from (D2) and (D3) we have that each $H[S_w \setminus w]$ is an isomorphic copy of $F$, they do not overlap and there are no edges between them.

\textbf{Phase 2.} In this phase our aim is to extend $\phi_1$ into $\phi_2 \colon H[S \cup R] \hookrightarrow G \setminus X$ such that $V(G) \setminus (X \cup Y) \subseteq \phi_2(S \cup R)$. In order to do so, we first partition $R = R_1 \cup I \cup R_2$ according to the following easy claim which will be proven later.

\begin{claim} \label{claim:partition_R}
There exists a partition $R = R_1 \cup I \cup R_2$ such that the following holds:
\begin{enumerate}[(a)]		
	\item $|R_2| = \beta_2 t / 2$ and $|I| \ge |R_2| / \Delta^2$,
	\item $I$ is a set of isolated vertices in $H[S \cup R_1 \cup I]$, and
	\item $|N_H(h) \cap (S \cup R_1 \cup I)| \le \Delta - 1$ for every $h \in R_2$.
\end{enumerate}
\end{claim}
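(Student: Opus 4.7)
The plan is to first select $I$ as a well-separated independent set of $R$-vertices with no neighbour in $S$, and then construct $R_2$ by absorbing $N_H(I)$ together with enough ``partner'' vertices to certify condition (c).

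I would begin by distilling the structural content of (D1)--(D4). Since each $w' \in D$ has all of its neighbours inside $S_{w'}$, no vertex of $R$ has a neighbour in $D$; combined with (D4) this gives $|N_H(r) \cap S| \le \Delta - 1$ for every $r \in R$, and in particular every $r \in R$ of degree $\Delta$ has a neighbour inside $R$. The key auxiliary object is the ``bad'' subset $R^{**} \subseteq R$ of degree-$\Delta$ vertices that have \emph{exactly one} $R$-neighbour; let $Q$ be the set of those unique $R$-neighbours. A double count of the edges between $R^{**}$ and $S$ gives $(\Delta-1)|R^{**}| \le \Delta|S|$, and since $|S|$ is linear in $t = \nu n$, both $|Q|$ and the complement of $R' := \{r \in R : N_H(r) \cap S = \emptyset\}$ in $R$ are $O(\Delta^2 t)$, hence negligible compared to $|R|$ once $\nu$ is taken small.

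Next I would apply Lemma~\ref{lemma:distance} to $R' \setminus Q$ with $k = 3$ to extract a set $I$ of size exactly $\beta_2 t/(2\Delta^2)$ whose vertices are pairwise at distance at least $3$ in $H$. This single choice packages three useful properties at once: $I$ is independent in $H$, no two vertices of $I$ share a common $H$-neighbour, and $I$ contains no vertex with an $S$-neighbour. Setting $N := N_H(I)$ we then have $N \subseteq R$, $|N| \le \Delta|I| = \beta_2 t/(2\Delta)$, and every $h \in N$ has a unique $I$-neighbour.

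The crux is condition (c): since $R_2$ will be forced to contain $N$ in order to achieve (b), we must produce a second $R_2$-neighbour for every $h \in N$ of degree $\Delta$. The exclusion $I \cap Q = \emptyset$ is precisely what makes this possible --- any such $h$ lies outside $R^{**}$, hence has at least two $R$-neighbours, one of which, call it $\phi(h)$, lies in $R \setminus I$. Let $R_2^0 := N \cup \{\phi(h) : h \in N,\ \deg_H(h) = \Delta\}$; this has size at most $\beta_2 t/\Delta$ and, by construction, every degree-$\Delta$ vertex of $R_2^0$ already has a neighbour in $R_2^0$ (namely $\phi(h)$ in one case, and the associated $h \in N$ in the other). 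Finally I would extend $R_2^0$ up to the exact target size $\beta_2 t/2$ by iteratively adding either a low-degree vertex from $R \setminus (I \cup R_2)$, a degree-$\Delta$ vertex from $R \setminus (I \cup R_2)$ that already has a neighbour in the current $R_2$, or an adjacent pair in $R \setminus (I \cup R_2)$; a short case analysis shows at least one of these moves is always available, since $|R \setminus (I \cup R_2)|$ remains large and every remaining degree-$\Delta$ vertex still has an $R$-neighbour (necessarily outside $I$). Setting $R_1 := R \setminus (I \cup R_2)$, property (a) is immediate from the sizes, (b) follows from the independence of $I$ together with $I \subseteq R'$ and $N \subseteq R_2$, and (c) holds by construction.
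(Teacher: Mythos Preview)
Your argument is essentially correct and takes a genuinely different route from the paper. The paper does not introduce $R^{**}$ or $Q$ at all; instead it picks $I$ so that its vertices are at distance at least $5$ from one another \emph{and} from $S$, and then sets $R_2$ to be the union over $h\in I$ of $(N_H(h)\cup N_H^2(h))\setminus\{h\}$. Condition~(c) then falls out structurally: every vertex of $R_2$ either lies in some $N_H^2(h)\setminus N_H(h)$ and hence has a neighbour back in $N_H(h)\subseteq R_2$, or lies in $N_H(h)$ and either has a second neighbour (which by the distance-$5$ separation is automatically in $R_2$) or has degree~$1$. To hit the exact size $|R_2|=\beta_2 t/2$ the paper first pigeonholes on the value $k:=|(N_H(h)\cup N_H^2(h))\setminus\{h\}|\in\{1,\dots,\Delta^2\}$ and only then fixes $|I|=\beta_2 t/(2k)$, so that $|R_2|=k|I|$ on the nose. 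Your construction instead works with first neighbourhoods only and compensates via the partner map $\phi$ and the exclusion $I\cap Q=\emptyset$; this lets you get away with pairwise distance~$3$, but you then need a separate top-up step to reach the target size, whereas the paper's pigeonhole handles this for free.

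One point to tighten: the greedy extension can stall on parity. If exactly one vertex remains to be added and every vertex left in $R\setminus(I\cup R_2)$ has degree $\Delta$ with no neighbour in the current $R_2$, then only the ``adjacent pair'' move applies and you overshoot by one. This is not a real obstacle (for instance allow $|R_2|\in\{\lfloor\beta_2 t/2\rfloor,\lfloor\beta_2 t/2\rfloor+1\}$, which is harmless for the downstream size computations), but the sentence ``a short case analysis shows at least one of these moves is always available'' should say a word about the last step.
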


Next, we proceed in three sub-phases: in Phase 2.a we extend $\phi_1$ to an embedding of $H[S \cup R_1]$ such that all but at most $|I|$ vertices from $V(G) \setminus (X \cup Y)$ are used. This is done using Lemma \ref{lemma:delta_S_embedding}. Next, using the fact that $I$ is a set of \emph{isolated} vertices in $H[S \cup R_1 \cup I]$ (Property (b)) we can map $I$ onto the remaining vertices from $V(G) \setminus (X \cup Y)$ (and some vertices from $Y$) in an arbitrary way. Finally, Property (c) enables us to use Lemma \ref{lemma:delta_S_embedding} to extend this to an embedding of $H[S \cup R]$ with the desired properties. We now give the formal details.

\textbf{Phase 2.a.} Choose a subset $V'' \subseteq V'$ such that $X' := X \cup W_2 \cup V''$ is of size $|X'| = |D| + |R_2|$. This is indeed possible as
$$
	|X| + |W_2| = (1 - \beta_1)t + o(t) < t + \beta_2 t / 2 = |D| + |R_2|
$$
and $|V'| = (\beta_1 + \beta_2) t - o(t)$. Such a choice of $X'$ implies
\begin{equation} \label{eq:v2}
	|S| + |R_1| = n - |D| - |R_2| - |I| = n - |X'| - |I| \le n - |X'| - \Theta(n).
\end{equation}
In addition, from Property (D4) we have
$$
	|N_H(h, S)| \le \Delta - 1 \quad \text{ for every } h \in R_1,
$$
and therefore one can apply Lemma \ref{lemma:delta_S_embedding} with $W_1$ (as $W$), $X'$ (as $X$) and $\phi_1$ (as $\phi'$) to obtain an $S$-embedding $\phi_2' \colon H[S \cup R_1] \hookrightarrow_S G_2 \setminus X'$ which extends $\phi_1$. Note that then $\phi_2'$ is an embedding of $H[S \cup R_1]$ into $G \setminus X'$.

\textbf{Phase 2.b.}
Let $U \subseteq V(G) \setminus X'$ denote the set of `unused' vertices, that is,
$$
	U = V(G) \setminus (X' \cup \phi_2'(S \cup R_1)).
$$
From $|S| + |R_1| = n - |X'| - |I|$, as derived in \eqref{eq:v2}, we conclude $|U| = |I|$. Therefore, taking an arbitrary bijection between $I$ and $U$ we extend $\phi_2'$ to an embedding $\phi_2'' \colon H[S \cup R_1 \cup I] \hookrightarrow G \setminus (X \cup Y_2)$. Importantly, note that at this point we have $V(G) \setminus (X \cup Y) \subseteq \phi_2''(S \cup R_1 \cup I)$.

\textbf{Phase 2.c.}
Note that
$$
	|S| + |R| = n - |D| = n - t = n - |X| - \beta_1 t = n - |X| - \Theta(n)
$$
and $\phi_2''(S') \cap W_2 = \emptyset$, where $S' = S \cup R_1 \cup I$. Therefore, by using Property (c) one can apply Lemma \ref{lemma:delta_S_embedding} with $W_2$ (as $W$), $X$ and $\phi_2''$ (as $\phi'$) to obtain an $S'$-embedding $\phi_2 \colon H[S' \cup R_2] \hookrightarrow_{S'} G_2 \setminus X$. Note that as $\phi''_2$ embeds $H[S']$ into $G \setminus (X \cup Y_2)$, we conclude that $\phi_2$ is an embedding of $H[S \cup D]$ into $G \setminus X$ such that $V(G) \setminus (X \cup Y) \subseteq \phi_2(S \cup D)$, as desired.

\textbf{Phase 3.} In the last phase we embed all the vertices from $D$. To this end, let $Y' \subseteq Y \setminus \phi_2(S \cup R)$ denote the set of unused vertices from $Y$ and note that these are the only unused vertices from $V(G) \setminus X$. From the assumption that $\calB = \calB_{G_1}(\{\tau_w(\Gamma)\}_{w \in D}, X \cup Y)$ is $Y$-robust we conclude the existence of a perfect matching between $\{\tau_w(\Gamma)\}_{w \in D}$ and $X \cup Y'$. That is, there exists a bijection $\xi \colon D \rightarrow X \cup Y'$ such that
$$
	\phi_2(N_H(w))  = \tau_w(f_w(N_H(w))) = \tau_w(\Gamma)  \subseteq N_{G_1}(\xi(w)).
$$
All in all, the extension $\phi$ of $\phi_2$ to $D$ given by $\phi(h) = \xi(h)$ defines an embedding of $H$ into $G$. This completes the embedding.

It remains to prove Claim \ref{claim:partition_R}.

\begin{proof}[Proof of Claim \ref{claim:partition_R}]
First, observe that there exists $k \in \{1, \ldots, \Delta^2\}$ and an independent set $I \subseteq V(R)$ in $H$ such that
$$
	|I| = \beta_2 t / 2k
$$	
and the following holds for every $h \in I$:
\begin{enumerate}[(i)]	
	\item the distance between $h$ and any vertex $h' \in (I \setminus \{h\}) \cup S$ is at least $5$, 	
	\item $|(N_{H}(h) \cup N_H^2(h)) \setminus \{h\}| = k$ for every $h \in I'$.	
\end{enumerate}
Such set can be obtained as follows: There are at least $|R| - \Delta^5|S| > n/2$ vertices $h \in R$ which are of distance at least $5$ from any vertex in $S$. Let $R' \subseteq R$ denote the set of such vertices. By Lemma \ref{lemma:distance} there exists a subset $I' \subseteq R'$ of size at least $|R'| / \Delta^5$ such that every two vertices from $I'$ are of distance at least $5$. Next, by the pigeonhole principle there exists $k \in \{1, \ldots, \Delta^2\}$ such that the subset $I'_k \subseteq I$ of all vertices $h \in I'$ with $|(N_H(h) \cup N_{H}^2(h)) \setminus \{h\}| = k$ is of size at least $|I'| / \Delta^2 \ge \beta t / 4k$. Therefore, we can choose $I \subseteq I'_k$ to be an arbitrary subset of the desired size.

Set $R_2 := \left(\bigcup_{h \in I} N_H(h) \cup N_H^2(h) \right) \setminus I$ and $R_1 := R \setminus (R_2 \cup I)$. From (i), (ii) and the size of $I$ we conclude
$$
	|R_2| = k|I| = \beta_2 t / 2.
$$
Furthermore, every vertex in $(N_H(h) \cup N_H^2(h)) \setminus \{h\}$ either has a neighbour in $(N_H(h) \cup N_H^2(h)) \setminus \{h\}$ or its only neighbour in $H$ is $h$. In any case, we conclude
$$
	|N_H(h) \cap (S \cup R_1 \cup I)| \le \Delta - 1,
$$
as desired.
\end{proof}
This completes the proof of Lemma \ref{lemma:univ_F}.
\end{proof}

\subsection{Proof of Lemma \ref{lemma:delta_absorber}}
\label{sec:delta_absorber}

In this section we prove Lemma \ref{lemma:delta_absorber}. %This is accomplished by constructing an absorbing structure and showing it appears in $\Gnp$.
Our starting point is the following simple lemma from \cite{montgomery2014embedding}.

\begin{lemma}[{\cite[Lemma 2.8]{montgomery2014embedding}}] \label{lemma:richard}
	There exists $m_0 \in \mathbb{N}$ such that for every $m \ge m_0$ there exists a bipartite graph $B$ on vertex classes $Z$ and $X \cup Y$ with the following properties:
	\begin{enumerate}[(i)]
		\item $\Delta(B) \le 40$ and every vertex in $Z$ has degree exactly $40$,
		\item $|Z| = 3m$ and $|X| = |Y| = 2m$,
		\item $B$ is $Y$-robust (see Definition \ref{def:robust}).
	\end{enumerate}
\end{lemma}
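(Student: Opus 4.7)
I would prove this by a probabilistic construction together with a case analysis of Hall's condition. The key structural observation that makes the analysis clean is that $Y$-robustness can be reduced to two statements that do not reference $Y'$ at all, so no union bound over subsets of $Y$ is needed.

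\textbf{Construction.} Let $B_X$ be a uniformly random bipartite graph between $Z$ and $X$ that is $20$-regular on the $Z$-side and $30$-regular on the $X$-side (such biregular graphs exist since $20 \cdot 3m = 30 \cdot 2m$). Independently, let $B_Y$ be such a graph between $Z$ and $Y$. Set $B = B_X \cup B_Y$. Then every $z \in Z$ has degree exactly $40$, and every $v \in X \cup Y$ has degree exactly $30$, so conditions (i) and (ii) of the lemma hold for free. It remains to show that with positive probability $B$ is $Y$-robust.

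\textbf{Reduction to two $Y'$-free events.} Fix any $Y' \subseteq Y$ with $|Y'| = m$; the bipartite graph on $Z$ and $X \cup Y'$ has equal sides of size $3m$, so $Y$-robustness is equivalent to Hall's condition $|N_B(S) \cap (X \cup Y')| \ge |S|$ for every $S \subseteq Z$. Split on $|S|$:
\begin{itemize}
\item For $|S| \le 2m$: since $|N_B(S) \cap (X \cup Y')| \ge |N_{B_X}(S)|$, it suffices that $B_X$ satisfy the event
\[
(E_1): \quad |N_{B_X}(S)| \ge |S| \text{ for every } S \subseteq Z \text{ with } 1 \le |S| \le 2m.
\]
\item For $|S| > 2m$: setting $T = Z \setminus S$ with $|T| < m$, Hall's condition at $S$ fails iff more than $|T|$ vertices of $X \cup Y'$ have all their $B$-neighbours in $T$; this is upper bounded by $|M(T)|$ where $M(T) := \{v \in X \cup Y : N_B(v) \subseteq T\}$. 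So it suffices that $B$ satisfy
\[
(E_2): \quad |M(T)| \le |T| \text{ for every } T \subseteq Z \text{ with } 1 \le |T| < m.
\]
\end{itemize}
Crucially, neither $(E_1)$ nor $(E_2)$ depends on $Y'$, so no union bound over $Y'$ is required.

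\textbf{Probabilistic estimates.} Both $(E_1)$ and $(E_2)$ hold with probability $1 - o(1)$ as $m \to \infty$ by standard calculations for random biregular bipartite graphs. For $(E_1)$, a union bound over $S \subseteq Z$ and potential "obstructing" images $U \subseteq X$ with $|U| = |S| - 1$ suffices; the probability that $N_{B_X}(S) \subseteq U$ decays like $(|U|/|X|)^{20|S|} \cdot (\text{corrections})$, which beats the binomial entropy coming from the choices of $S$ and $U$ because the expansion constant is large ($20 \gg 2$). For $(E_2)$, each vertex $v \in X \cup Y$ lies in $M(T)$ with probability at most $\binom{|T|}{30}/\binom{3m}{30} \le (|T|/3m)^{30}$, so $\mathbb{E}|M(T)| \le 4m (|T|/3m)^{30}$; a standard first-moment/Markov argument combined with a union bound over $T \subseteq Z$ with $|T| < m$ (there are at most $2^{3m}$ such sets, and the failure probability for each is doubly exponentially small thanks to the exponent $30$) gives $(E_2)$ w.h.p. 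By the probabilistic method, for all sufficiently large $m$ a valid $B$ exists.

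\textbf{Main obstacle.} The delicate part is event $(E_1)$ in the regime where $|S|$ approaches $2m$: naive disjoint-neighbourhood bounds give $|N_{B_X}(S)| \approx 20|S|$ only while $|S| \ll |X|/20$, and in the saturation regime close to $|S| = 2m$ one instead must argue, via the König-reformulation / configuration model, that the number of isolated vertices in $X$ relative to $S$ is small. This uses the $30$-regularity of the $X$-side, which forces $B_X$ to spread edges nearly uniformly and prevents any small $U \subseteq X$ from absorbing the entire neighbourhood of a large $S$. All the other steps are routine given the well-developed theory of random regular bipartite expanders.
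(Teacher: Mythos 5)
The paper does not actually prove this lemma --- it is imported verbatim from Montgomery \cite{montgomery2014embedding}, so there is no in-paper argument to compare against. Your general strategy (a random biregular bipartite graph plus a verification of Hall's condition split by $|S|$) is the standard and essentially correct way to build such robust expanders. However, as written your proof has a genuine gap.

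The problem is the case split. Your event $(E_1)$ --- that $|N_{B_X}(S)| \ge |S|$ for \emph{every} $S \subseteq Z$ with $|S| \le 2m$ --- is not merely hard to verify in the regime $|S|$ close to $2m$; it is deterministically false for every graph $B_X$ of the type you construct. For $|S| = 2m$ the condition forces $N_{B_X}(S) = X$, i.e.\ every vertex of $X$ must have a neighbour in every $2m$-subset of $Z$. But each $v \in X$ has only $30$ neighbours in $Z$, and $|Z \setminus N_{B_X}(v)| = 3m - 30 \ge 2m$ once $m \ge 30$, so one can choose $S$ of size $2m$ disjoint from $N_{B_X}(v)$, giving $|N_{B_X}(S)| \le 2m - 1 < |S|$. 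The same argument kills $(E_1)$ for all $s \ge 2m - \Omega(m)$, so no amount of care with the configuration model or the $30$-regularity of the $X$-side (your proposed rescue in the ``main obstacle'' paragraph) can save it: the large-$S$ case simply cannot be handled by $B_X$ alone, because it must use the extra room provided by $Y'$.

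The fix is to move the split point and let the complement argument carry more weight. Restrict the direct expansion event to $|S| \le m$, where your union bound $\binom{3m}{s}\binom{2m}{s-1}\bigl(\tfrac{s-1}{2m}\bigr)^{20s}$ does decay geometrically, and extend $(E_2)$ to all $T \subseteq Z$ with $|T| \le 2m$ (not just $|T| < m$), which covers every $S$ with $|S| \ge m$. The extended $(E_2)$ still does not reference $Y'$ and still holds w.h.p.: the first-moment bound $\binom{3m}{\tau}\binom{4m}{\tau+1}(\tau/3m)^{30(\tau+1)} \le \bigl(12e^{2}\tau^{28}/(3^{30}m^{28})\bigr)^{\tau+O(1)}$ is exponentially small for all $\tau \le 2m$ because $12e^{2}\cdot 2^{28} \ll 3^{30}$. (Note also that bounding $\Pr[|M(T)|>\tau]$ requires this two-set first-moment computation, not Markov applied to $\mathbb{E}|M(T)|$ for each $T$ separately, which would only give a polynomially small bound.) Crucially, the extended $(E_2)$ uses the degree condition on all of $X \cup Y$, i.e.\ both $B_X$ and $B_Y$, which is exactly the ingredient your version of the large-$S$ case was missing. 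With these two corrections the argument goes through.
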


We construct an absorbing structure as follows: Let $B$ be a bipartite graph on vertex classes $Z$ and $X \cup Y$ given by Lemma \ref{lemma:richard} for suitably chosen $m$. We obtain the graph $A_B$ from $B$ (an \emph{absorber} based on $B$) by replacing each vertex $z \in Z$ with a distinct copy of $F$, denoted by $F_z$, and for each edge $zv \in B$ connect $v$ to all the vertices in $\Gamma_z$ (here $\Gamma_z \subseteq V(F_z)$ denotes the subset which corresponds to $\Gamma \subseteq V(F)$). Then the auxiliary bipartite graph $\calB = \calB_{A_B}(\{\Gamma_z\}_{z \in Z}, X \cup Y)$ has the property that $\Gamma_z v \in \calB$ if and only if $zv \in B$ thus $\calB$ inherits the $Y$-robustness from $B$. In particular, if $A_B \subseteq G$ for some graph $G$, then such copies of $F$ define a collection of vertex-disjoint embeddings of $F$ which together with $X$ and $Y$ satisfy the property of Lemma \ref{lemma:delta_absorber}. Therefore, we aim to show that typically $A_B \subseteq \Gnp$. Similar ideas were used in \cite{kwan2016almost,montgomery2014embedding}.

For larger $\Delta$ (say, $\Delta\geq 50$) one can show relatively easily that $A_B \subseteq \Gnp$ for $p \ge n^{-\eps - 1/\Delta}$, for some small $\eps > 0$, using the result of Riordan \cite{riordan2000spanning}. However, for small $\Delta$ the bound on $p$ obtained from \cite{riordan2000spanning} does not suffice. The main idea is to alter the graph $B$ from Lemma \ref{lemma:richard} in order to obtain a very sparse and nicely structured graph $A_B$ which can then be embedded with the help of the lemmata from Section \ref{sec:F_matching}.

As the first step we decrease the degree of vertices in $Z$ down to $3$.

\begin{lemma} \label{lemma:max_deg_3}
	There exists $m_0, C \in \mathbb{N}$ such that for every $m \ge m_0$ there exists a bipartite graph $B$ on vertex classes $Z$ and $X \cup Y$ with the following properties:
	\begin{enumerate}[(i)]
		\item $\Delta(B) \le 40$ and every vertex in $Z$ has degree at most $3$,		
		\item $|Z| = Cm$, $|X| = (C-1)m$ and $|Y| = 2m$, and
		\item $B$ is $Y$-robust.
	\end{enumerate}
\end{lemma}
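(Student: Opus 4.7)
The plan is to start from the graph $B'$ on vertex classes $Z'$ and $X' \cup Y'$ provided by Lemma \ref{lemma:richard}, and to replace each vertex $z \in Z'$ (which has degree exactly $40$) by a small ``fan'' gadget that simulates the matching constraint at $z$ but using only vertices of degree at most $3$ on the $Z$-side.

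Concretely, for each $z \in Z'$ with neighbours $u_1^z, \ldots, u_{40}^z$ in $X' \cup Y'$, I will introduce $40$ new vertices $z_1, \ldots, z_{40}$ (put into the new class $Z$) and $39$ new vertices $a_1^z, \ldots, a_{39}^z$ (added to the new class $X$), and add the edges $z_i u_i^z$ for $1 \le i \le 40$, together with $z_i a_i^z$ and $z_{i+1} a_i^z$ for $1 \le i \le 39$. Each $a_j^z$ then has degree $2$, each $z_i$ has degree $2$ or $3$, while the degrees of the original vertices of $X'$ and $Y'$ are preserved (each of their former edges to $z$ is redirected to the corresponding $z_i$). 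Setting $Y := Y'$, $X := X' \cup \bigsqcup_{z \in Z'} \{a_1^z, \ldots, a_{39}^z\}$ and $Z := \bigsqcup_{z \in Z'} \{z_1, \ldots, z_{40}\}$ then yields
$$
  |Z| = 120m, \qquad |X| = 2m + 39 \cdot 3m = 119m, \qquad |Y| = 2m,
$$
so the parameters of the lemma are satisfied with $C = 120$, and by construction $\Delta(B) \le 40$ and every vertex of $Z$ has degree at most $3$.

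It remains to verify $Y$-robustness. Given any $Y'' \subseteq Y = Y'$ of size $|Y''| = m = |Z| - |X| = |Z'| - |X'|$, the $Y'$-robustness of $B'$ gives a perfect matching $M'$ between $Z'$ and $X' \cup Y''$; for each $z \in Z'$ let $u_{k(z)}^z$ be the partner of $z$ in $M'$. Inside the gadget of $z$ I match $z_{k(z)}$ to $u_{k(z)}^z$, then match $a_j^z \leftrightarrow z_j$ for $j < k(z)$ and $a_j^z \leftrightarrow z_{j+1}$ for $j \ge k(z)$; this saturates all $40$ new $z_i$'s and all $39$ new $a_j^z$'s while consuming exactly the single external vertex $u_{k(z)}^z$. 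Taking the union of these local matchings over all $z \in Z'$ produces a perfect matching between $Z$ and $X \cup Y''$, as required.

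The construction is essentially a routine ``edge-subdivision/fan'' trick, so no step should pose a genuine difficulty; the only mild technical point is to arrange the gadget so that (i) every new $Z$-vertex has degree at most $3$, (ii) every new $X$-vertex is forced to be matched inside the gadget, and (iii) exactly one external neighbour per gadget is used, and this is precisely what the linear ``zig-zag'' structure above achieves.
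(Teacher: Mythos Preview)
Your proof is correct. The approach differs from the paper's: the paper proceeds by an \emph{iterative splitting} procedure, repeatedly replacing a vertex $v \in Z$ of degree $d \ge 4$ by two vertices $v_1, v_2$ of degrees $d-1$ and $3$ joined through a new common neighbour $u \in X$, and arguing inductively that each single split preserves $Y$-robustness. After $37$ splits per original $Z$-vertex this yields $C = 114$. Your construction is instead a \emph{one-shot} replacement of each $z \in Z'$ by a path-like ``zig-zag'' gadget (the $z_i$'s alternating with the $a_j^z$'s, each $z_i$ pointed at one external neighbour), giving $C = 120$. The two ideas are closely related --- indeed, unrolling the paper's recursive splitting with the particular choice $|N_2| = 2$ at every step produces essentially the same linear gadget you wrote down --- but your presentation is more direct: you describe the final graph and the final matching in one stroke, whereas the paper's argument is modular and inductive. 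Either route is perfectly adequate; the only difference in output is the value of the constant $C$, which is irrelevant for the lemma.
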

\begin{proof}
	Let $B_0$ be the graph with vertex classes $Z_0$ and $X_0 \cup Y$ given by Lemma \ref{lemma:richard} for sufficiently large $m$. Set $i = 0$ and as long as $Z_i$ contains a vertex of degree at least $4$ apply the following \emph{splitting} operation: pick such a vertex $v$ and consider an arbitrary partition $N_1 \cup N_2 = N_{B_i}(v)$ of the neighbourhood of $v$ such that $|N_1| \ge |N_2| = 2$. We obtain the graph $B_{i+1}$ from $B_i$ by removing the vertex $v$, adding vertices $\{v_1, v_2, u\}$ and placing an edge between $v_j$ and all the vertices in $\{u\} \cup N_j$, for $j \in \{1,2\}$ (see Figure \ref{fig:splitting}). The graph $B_{i+1}$ has vertex classes $Z_{i+1} = (Z_i \setminus \{v\}) \cup \{v_1, v_2\}$ and $X_{i+1} \cup Y = X_i \cup \{u\} \cup Y$. Set $i := i + 1$ and proceed to the next step.

	\begin{figure}[h!]
  		\centering
  		\includegraphics[scale=0.6]{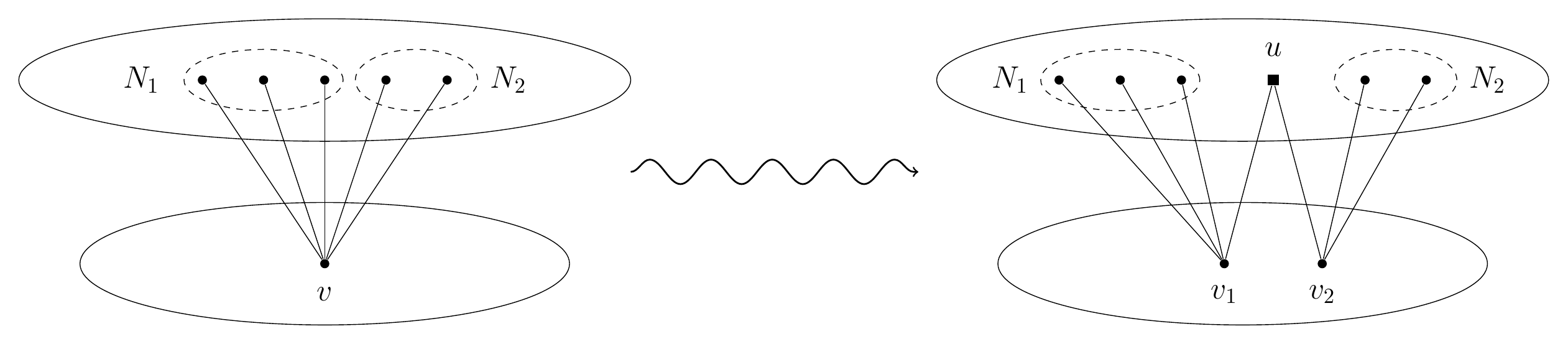}  		
  		\caption{Splitting a vertex $v$.}
  		\label{fig:splitting}
	\end{figure}
	
	Note that each such operation does not change the degree of any vertex in $B_i \setminus \{v\}$. Furthermore, we have $\deg_{B_{i+1}}(v_2) = 3$ and $\deg_{B_{i+1}}(v_1) = \deg_B(v) - 1$ thus the whole process terminates after exactly $38 \cdot 3m$ steps (recall that the initial size of $Z$ is $3m$ and every vertex in $Z$ has degree exactly $40$). In particular, we have $|Z_i| = 114m$, $|X_i| = 113m$ (in each step the size of both $X_i$ and $Z_i$ increases by $1$) and $|Y| = 2m$.

	Next, we claim that if $B_i$ is $Y$-robust then so is $B_{i+1}$. Consider an arbitrary subset $Y' \subseteq Y$ such that $|X_i| + |Y'| = |Z_i|$ and let $\xi \colon Z_i \rightarrow X_i \cup Y'$ denote a perfect matching. Note that then also $|X_{i+1}| + |Y'| = |Z_{i+1}|$ since the size of both $X_{i+1}$ and $Z_{i+1}$ increases by 1 (as already noted before). For each vertex $z \in Z_i \setminus \{v\}$ we match $z$ to $\xi(z)$, which saturates all the vertices in $X_{i+1} \cup Y'$ except $\{u, \xi(v)\}$ and all the vertices in $Z_{i+1}$ except $\{v_1, v_2\}$. If $\xi(v) \in N_1$ then we match $v_1$ to $\xi(v)$ and $v_2$ to $u$, and otherwise match $v_2$ to $\xi(v)$ and $v_1$ to $u$. This gives a perfect matching between $Z_{i+1}$ and $X_{i+1} \cup Y'$, as required.

	To summarise, upon the termination of the procedure we obtain a graph $B$ with vertex classes $Z$ and $X \cup Y$ which is $Y$-robust and $|Z| = 114m$, $|X| = 113m$ and $|Y|=2m$, thus the lemma holds for $C = 114$.
\end{proof}

%Using the graph given by Lemma \ref{lemma:max_deg_3}, the `local' obstacle described earlier is no longer an issue even in the case where $F$ is a triangle and $\Delta = 3$. Indeed,

The next lemma shows that we can further sparsify $B$ such that vertices of degree at least $3$ are far apart. This structural property will play an important role in our embedding scheme.

\begin{lemma} \label{lemma:subdivision}
	Let $B$ be a graph with vertex classes $Z$ and $X \cup Y$ which is $Y$-robust. Consider a graph $B'$ obtained from $B$ by replacing each edge $zv \in B$ by a path $P_{zv}$ of length $L$ (where $L$ is odd), in such a way that all paths are internal disjoint (that is, $B'$ is an $(L-1)$-\emph{subdivision} of $B$). Then the following holds:
	\begin{itemize}
		\item $B'$ is a bipartite graph,
		\item there exists a unique partition $Z' \cup X'$ of internal vertices of such paths such that $Z \cup Z'$ and $X \cup X' \cup Y$ form vertex classes of $B'$, and
		\item $B'$ is $Y$-robust.
	\end{itemize}
\end{lemma}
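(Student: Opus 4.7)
My plan is to check the three claims in order, with the work concentrated in extending matchings from $B$ to its subdivision.

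First, I would verify bipartiteness and exhibit the partition. Each edge $zv \in B$ becomes a path $P_{zv} = z w_1 w_2 \cdots w_{L-1} v$ of odd length $L$, and since $L$ is odd the two endpoints can consistently lie on opposite sides of a bipartition. Placing $w_i$ on the $Z$-side when $i$ is even and on the opposite side when $i$ is odd yields a proper $2$-coloring of $B'$; setting $Z' := \{w_i : i \text{ even}\}$ and $X' := \{w_i : i \text{ odd}\}$ (across all paths) gives the partition in the statement. Uniqueness follows because every connected component of $B'$ contains at least one original $z \in Z$ and one original $v \in X \cup Y$ joined by a subdivided path, which forces the two-coloring. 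A direct count gives $|Z'| = |X'| = e(B)(L-1)/2$.

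For $Y$-robustness, fix $Y' \subseteq Y$ with $|Y'| = |Z \cup Z'| - |X \cup X'|$. Since $|Z'| = |X'|$ this reduces to $|Y'| = |Z| - |X|$, so $Y$-robustness of $B$ supplies a perfect matching $M$ of $B$ between $Z$ and $X \cup Y'$. The key observation is that a path of odd length $L$ admits two natural matching patterns: a \emph{full} matching using the edges in odd positions $z w_1, w_2 w_3, \ldots, w_{L-1} v$, which saturates every vertex of $P_{zv}$ including both endpoints; and an \emph{internal} matching using the edges in even positions $w_1 w_2, w_3 w_4, \ldots, w_{L-2} w_{L-1}$, which saturates precisely the $L-1$ internal vertices and leaves both endpoints unmatched. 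My plan is to apply the full matching on every path $P_{zv}$ with $zv \in M$, and the internal matching on every path $P_{zv}$ with $zv \notin M$.

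It then remains to verify, vertex by vertex, that this gives a perfect matching between $Z \cup Z'$ and $X \cup X' \cup Y'$: every internal vertex of every path is covered by construction; each $z \in Z$ sits in exactly one edge of $M$ and is matched through the corresponding full matching; each $v \in X \cup Y'$ is saturated by $M$ (since $M$ is a perfect matching of $B$ onto $X \cup Y'$) and hence matched by the unique full matching incident to it; and each $v \in Y \setminus Y'$ is incident to no edge of $M$, so every path at $v$ uses the internal matching and $v$ correctly stays unmatched. No single step here is an obstacle; the real content is the parity observation that $L$ odd is precisely what allows the two matching patterns on a subdivided edge to coexist and to toggle endpoint coverage as needed.
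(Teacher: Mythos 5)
Your proposal is correct and matches the paper's argument: both choose the same two canonical matchings on each subdivided path (a "full" one saturating endpoints, an "internal" one avoiding them) and assign the full pattern to paths corresponding to edges of the perfect matching supplied by $Y$-robustness of $B$, with the internal pattern elsewhere. The added explicit parity bookkeeping for the bipartition and the vertex-by-vertex check of the resulting matching are fine and just flesh out details the paper leaves implicit.
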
		
\begin{proof}
	First, observe that $|X'| = |Z'|$ as each edge in $B$ contributes the same number of vertices to both sides. Consider a subset $Y' \subseteq Y$ such that $|Z \cup Z'| = |X \cup X'| + |Y'|$. Then $|Z| = |X| + |Y'|$ thus there exists a perfect matching $\xi \colon Z \to X \cup Y'$. We now form a perfect matching between $Z \cup Z'$ and $X \cup X' \cup Y'$ as follows: for each for $z \in Z$ choose a perfect matching in $P_{z \xi(z)}$ (blue matching in Figure \ref{fig:subdivide}); similarly, for each edge $z v \in B$ where $z \in Z$ and $\xi(z) \neq v \in X \cup Y$ choose a matching in $P_{zv}$ which contains all vertices except $\{z,v\}$ (red matching in Figure \ref{fig:subdivide}).

	\begin{figure}[h!]
		\centering
		\includegraphics[scale=0.6]{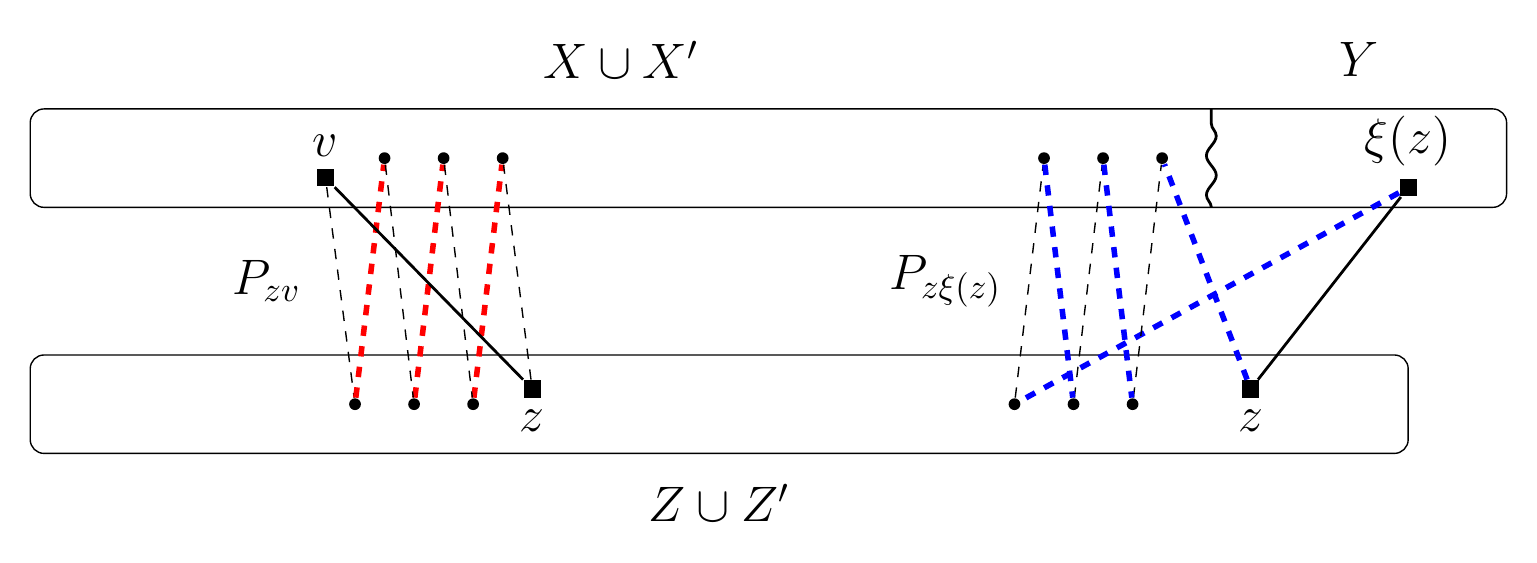}
		\caption{Solid and dashed lines represent edges in $B$ and $B'$, respectively. Note that all internal vertices of every path are outside of $Y$.}
		\label{fig:subdivide}		
	\end{figure}

	As $\xi(Z) = X \cup Y'$ this clearly matches all vertices from $Z \cup X \cup Y'$ and no vertex from $Y \setminus Y'$. Furthermore, every other vertex belongs to some path $P_{zv}$ for $z \in Z$ and $v \in X \cup Y$. As all such vertices are matched as well (regardless of whether we take a blue or a red matching), this
	gives a perfect matching between $Z \cup Z'$ and $X \cup X' \cup Y'$, as required.
\end{proof}

Having the previous two lemmas at hand, we describe our proof strategy. Let $B$ be a graph given by Lemma \ref{lemma:max_deg_3} and consider the $11$-subdivision $B'$ of $B$, as described in Lemma \ref{lemma:subdivision} (11 is, of course, somewhat arbitrary and chosen to make the proof easier). We show that w.h.p $A_{B'} \subseteq \Gnp$ using the following strategy:
\begin{enumerate}
	\item Using Lemma \ref{lemma:factor} embed $|Z|$ copies of a graph $F_\Gamma^+$ into $\Gnp$, where $F_\Gamma^+$ is the graph obtained by adding $3$ new vertices to $F$ and connecting them to all the vertices in $\Gamma$ (see Figure \ref{fig:F_G}). Such copies of $F_\Gamma^+$ correspond to vertices in $Z$ together with their neighbourhood in $B'$ (recall that every vertex in $Z$ has degree $3$ in $B$ and, therefore, in $B'$).

	\item Choose an arbitrary injective mapping of $X \cup Y$ into $V(G)$ which avoids previously found copies of $F_\Gamma^+$.

	\item For each edge $zw \in B$ let $w_z \in X'$ denote the first vertex on the path $P_{zw}$ from $z \in Z$ to $w \in X \cup Y$. Using Lemma \ref{lemma:connecting} we find vertex-disjoint copies of \emph{$F_\Gamma$-paths} (see Figure \ref{fig:F_G_path}) of length $10$ with the endpoints \emph{anchored} in $w$ and $w_z$, one for each edge $zw \in B$. Each such $F_\Gamma$-path corresponds to the remaining vertices on the path $P_{zw}$. This defines an embedding of $A_{B'}$ into $\Gnp$.
\end{enumerate}

\begin{figure}[h!]
	\centering
	\subfloat[The graph $F_\Gamma^+$]{\includegraphics[scale=0.6]{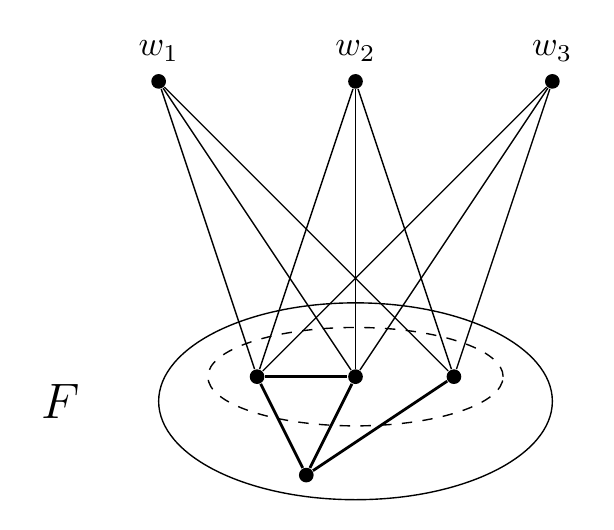} \label{fig:F_G}}
	\hspace{1.5cm}
	\subfloat[The $F_\Gamma$-path of length $6$ from $w$ to $w'$]{\includegraphics[scale=0.6]{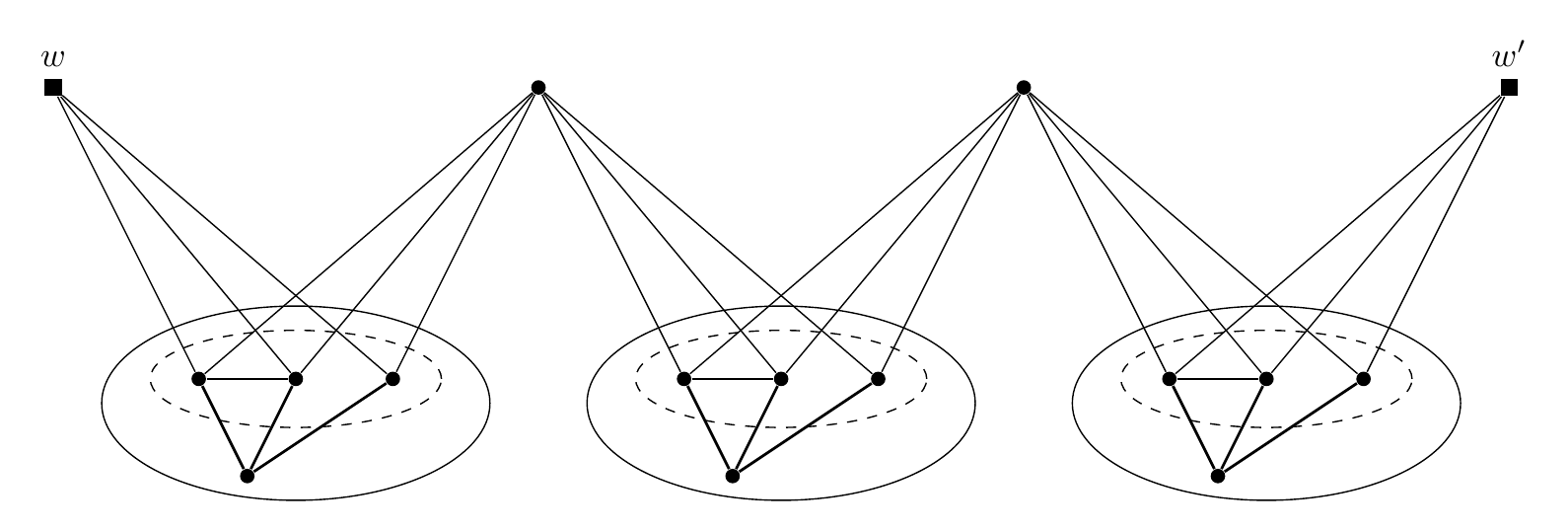} \label{fig:F_G_path}}

	\caption{Building blocks of an absorber $A_B$. Dashed subset of $F$ represents the set $\Gamma$.}
\end{figure}

It is important to notice that in the step 1 we embed $F_\Gamma^+$ rather than just $F$ for the following reason: if we only embed $F$ then, as $|\Gamma| = \Delta$ and $p = n^{-\eps - 1/\Delta}$, the subset corresponding to $\Gamma$ will most likely not have a common neighbourhood. We now make this strategy precise. The following statement together with Lemma \ref{lemma:factor} takes care of the first step. The proof is a rather straightforward case analysis, thus we postpone it for the appendix.

\begin{lemma} \label{lemma:F_plus_m1}
	Let $F$ be a graph with maximum degree $\Delta$ and $\Gamma \subseteq V(F)$ a subset of size $|\Gamma| \le \Delta$, for some integer $\Delta \ge 3$. Then $m_1(F_\Gamma^+) \le \Delta - 1/2$.
\end{lemma}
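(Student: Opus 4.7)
Any subgraph $F' \subseteq F_\Gamma^+$ is determined by the set $V_F := V(F') \cap V(F)$ of original vertices it retains (of size $v$, say) together with the subset $T \subseteq V(F_\Gamma^+) \setminus V(F)$ of new vertices it retains (of size $k \in \{0,1,2,3\}$). Since the three added vertices form an independent set in $F_\Gamma^+$ and each is joined to exactly the vertices of $\Gamma$, setting $\gamma := |V_F \cap \Gamma|$ we obtain $v(F') = v + k$ and
\[
e(F') \le e(F[V_F]) + k\gamma.
\]
It therefore suffices to show $e(F[V_F]) + k\gamma \le (\Delta - 1/2)(v + k - 1)$ for every admissible $(v, k)$ with $v + k \ge 2$, using only $\Delta(F) \le \Delta$ and $|\Gamma| \le \Delta$, which together force $\gamma \le \min(v, \Delta)$.

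My plan is to split according to whether $v \ge \Delta + 1$ or $v \le \Delta$, using a different upper bound on $e(F[V_F])$ in each regime. In the \emph{large} regime $v \ge \Delta + 1$, the max-degree bound $e(F[V_F]) \le \Delta v/2$ together with $\gamma \le \Delta$ gives $e(F') \le \Delta(v + 2k)/2$, and after simplification the target inequality reduces to $(\Delta - 1)v \ge 2\Delta + k - 1$, which is straightforward since $k \le 3$, $\Delta \ge 3$, and $v \ge \Delta + 1 \ge 4$. In the \emph{small} regime $v \le \Delta$ the max-degree bound becomes too loose near the boundary, so I would instead use the pair bound $e(F[V_F]) \le \binom{v}{2}$ together with $\gamma \le v$; the desired inequality becomes
\[
v(v - 1) + k(2v - 2\Delta + 1) \le (2\Delta - 1)(v - 1).
\]
When $v \le \Delta - 1$, the coefficient of $k$ on the left is non-positive, so the inequality collapses to the trivial $v(v-1) \le (2\Delta - 1)(v-1)$. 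When $v = \Delta$ it collapses to $k \le (\Delta - 1)^2$, which holds since $k \le 3 \le (\Delta - 1)^2$ for $\Delta \ge 3$. The degenerate cases $v \in \{0,1\}$ give $e(F') \le k\gamma \le k$ against $v + k - 1 \ge k$, yielding ratio at most $1 \le \Delta - 1/2$.

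There is no real obstacle beyond careful bookkeeping; the only borderline instance is $\Delta = 3$, $v = 3$, $\gamma = k = 3$, where the pair bound gives $e(F') \le 3 + 9 = 12$ and $v(F') - 1 = 5$, yielding ratio exactly $12/5 = 2.4$, just below the target $5/2$. Note that applying the degree bound $e(F[V_F]) \le \Delta v/2 = 9/2$ in this instance would yield $27/10 > 5/2$, which is precisely why one must invoke the pair bound in the small-$v$ regime and not blindly use the max-degree bound throughout.
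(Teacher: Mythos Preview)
Your proof is correct and follows essentially the same approach as the paper: split according to the number $a$ of vertices retained from $F$, using the degree bound $e(F[V_F]) \le \Delta a/2$ in the large regime and a cruder count in the small regime. The paper splits at $a \ge 4$ and handles $a \in \{1,2,3\}$ by explicit case analysis, whereas you split at $v \ge \Delta + 1$ and treat $v \le \Delta$ uniformly via the pair bound $\binom{v}{2}$; both work, and your observation that the pair bound (rather than the degree bound) is needed at $v = \Delta$, $k = 3$, $\Delta = 3$ is exactly the borderline that forces the split. One trivial slip: your ``$v + k - 1 \ge k$'' in the degenerate cases fails for $v = 0$, but there $\gamma = 0$ gives $e(F') = 0$ anyway, so no harm.
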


Next, we define an \emph{$F_\Gamma$-path} more formally. Given a graph $F$ and a subset $\Gamma \subseteq V(F)$, we define the \emph{$F_\Gamma$-path} of length $L$ as follows: Consider a path of length $L$ (that is, a path on $L+1$ vertices), replace each \emph{even} vertex with a distinct copy of $F$ and connect each \emph{odd} vertex (which we call an \emph{outside} vertex) to all vertices in $\Gamma$ from the corresponding neighbouring copies of $F$ (see Figure \ref{fig:F_G_path}). Note that if $L$ is even then $F_\Gamma$-path contains $L/2 + 1$ `outside' vertices.

The following lemma shows that we can apply Lemma \ref{lemma:connecting} with $p = n^{-\eps - 1/\Delta}$ in order to find $F_\Gamma$-paths with anchored endpoints. Again, the proof is a rather simple (but tedious) estimate on the number of edges in various subgraphs of $F_\Gamma$-paths, thus we also postpone it for the appendix.

\begin{lemma} \label{lemma:F_path_m}
	Let $F$ be a graph with maximum degree $\Delta$ and $\Gamma \subseteq V(F)$ a subset of size $|\Gamma| \le \Delta$ such that $\deg_F(v) \le \Delta - 1$ for every $v \in \Gamma$, for some $\Delta \ge 3$. If $H$ is an $F_\Gamma$-path of length $10$ then $m(H, \mathbf{x}) \le \Delta - 1/2$, where $\mathbf{x} = (w, w')$ and $w$ and $w'$ are endpoints of such a path (see Figure \ref{fig:F_G_path}).
\end{lemma}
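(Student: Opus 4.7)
The plan is to verify the inequality $e(H')/(v(H') - c) \le \Delta - 1/2$ for every $H' \subseteq H$ with $e(H') > 0$, where $c = 2$ if $\{w, w'\} \subseteq V(H')$ and $c = 1$ if $\{w, w'\} \cap V(H') = \emptyset$ (the definition of $m$ rules out the partial case). I will parametrize such an $H'$ by $n_i = |V(F_i) \cap V(H')|$, $g_i = |V(F_i) \cap V(H') \cap \Gamma_i|$ for $i = 1, \ldots, 5$ (the five interior copies of $F$), together with $U = V(H') \cap \{u_0, \ldots, u_5\}$, $k = |U|$ and $t_i = |\{u_{i-1}, u_i\} \cap U|$, where $u_0 = w$ and $u_5 = w'$ are the endpoints. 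The edges of $H'$ split into interior edges of each $F_i[V_i]$ and bridge edges between $U$ and $\bigcup_i \Gamma_i$, yielding
$$e(H') = \sum_{i=1}^{5} e(F_i[V_i]) + \sum_{i=1}^{5} g_i t_i.$$
Summing degrees over $V_i$ and using $\Delta(F) \le \Delta$ together with $\deg_F(v) \le \Delta - 1$ for $v \in \Gamma$ gives the key upper bound $e(F_i[V_i]) \le (n_i \Delta - g_i)/2$.

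Substituting and clearing denominators, the desired inequality becomes
$$(\Delta - 1) \sum_{i=1}^{5} n_i + \sum_{i=1}^{5} g_i (1 - 2 t_i) + (2\Delta - 1)(k - c) \ge 0.$$
I split into cases on $c$ and $k$. For $c = 2$ we have $k \ge 2$, so the last term is non-negative; the bridge penalty $\sum g_i(2 t_i - 1)$ is controlled using $g_i \le n_i$ together with the identity $\sum_i t_i = 2k - 2$ (each non-endpoint in $U$ contributes $2$, each endpoint contributes $1$). For $c = 1$ and $k \ge 1$, we have $\sum t_i = 2k$, and the slack $(2\Delta - 1)(k - 1) \ge 0$ combined with the first term absorbs the penalty. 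For $c = 1$ and $k = 0$ (so all $t_i = 0$) the inequality reduces to $(\Delta - 1) N + \sum g_i \ge 2\Delta - 1$ with $N = \sum n_i \ge 2$; this is immediate for $N \ge 3$, while the borderline $N = 2$ forces a single edge inside some $F_i$ and is resolved by replacing $(n_i \Delta - g_i)/2$ with the trivial bound $e(F_i[V_i]) \le \binom{n_i}{2} = 1$.

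The main obstacle is to ensure that the $+ g_i/2$ surplus coming from the improved bound $e(F_i[V_i]) \le (n_i \Delta - g_i)/2$ actually suffices to pay for the bridge edges $g_i t_i$ in the worst configuration, namely an interior outside vertex $u_j$ with both adjacent copies $F_j, F_{j+1}$ fully included and $V_i \cap \Gamma_i = \Gamma_i$. The hypothesis $\deg_F(v) \le \Delta - 1$ for $v \in \Gamma$ is precisely what produces this surplus, and for $\Delta = 3$ (the tightest case) the finitely many remaining subcases are checked by direct arithmetic; since the $F_\Gamma$-path of length $10$ has only eleven pieces in total (six outside vertices and five copies of $F$), this case analysis is concrete but tedious, which justifies the paper's decision to defer the full proof to the appendix.
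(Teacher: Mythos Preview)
Your approach is correct in outline but genuinely different from the paper's, and the trade-off is worth noting. The paper handles the case $\{w,w'\}\cap V(H')=\emptyset$ by decomposing $H'$ into the pieces $H'\cap F_i^+$ (each $F_i^+$ being a copy of $F$ together with its two adjacent outside vertices) and invoking Lemma~\ref{lemma:F_plus_m1} on each piece, using the simple fact that $d_1$ of a one-point amalgam is at most the maximum over the pieces. This reuses work already done and avoids any new edge counting. For the case $\{w,w'\}\subseteq V(H')$, the paper first disposes of disconnected $H'$ (split into two components, each handled by the previous case) and then observes that a \emph{connected} $H'$ containing both endpoints must contain all six outside vertices; this forces $k=6$ and collapses your $(n_i,g_i,t_i)$ bookkeeping to two aggregate parameters $a=\sum g_i$ and $b=\sum(n_i-g_i)$, after which a short computation (with a separate check for $\Delta=3$ and $v(H')>26$) finishes.

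Your parameterisation is more uniform and self-contained, but the hand-wave ``controlled using $g_i\le n_i$ together with $\sum_i t_i = 2k-2$'' is not quite enough as stated: for $\Delta=3$ and $k=6$ one also needs the bound $g_i\le|\Gamma|\le\Delta$ to handle large $n_i$, and the bound $\sum g_i t_i\le 2\sum g_i$ to handle small $n_i$, exactly mirroring the paper's split into $v(H')\le 26$ versus $v(H')>26$. So the finite check you defer is real, and the paper's connectedness reduction is precisely the structural shortcut that trims it down. Both routes arrive at the same place; the paper's is shorter because it leverages Lemma~\ref{lemma:F_plus_m1} and the topology of the path, whereas yours rederives everything from the single degree bound $e(F_i[V_i])\le(n_i\Delta-g_i)/2$.
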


Finally we are ready to prove Lemma \ref{lemma:delta_absorber}.

\begin{proof}[Proof of Lemma \ref{lemma:delta_absorber}]
	Let $B$ be a bipartite graph on vertex classes $X \cup Y$ and $Z$, as given by Lemma \ref{lemma:max_deg_3} for $m$ to be specified shortly. Consider a graph $B'$ as described in Lemma \ref{lemma:subdivision}. Note that each path (see the construction of $B'$ in Lemma \ref{lemma:subdivision}) adds $5$ vertices to $Z'$ and there is one such path for each edge in $B$, thus
	$$
		|Z \cup Z'| = Cm + 5 e(B) = Cm + 40 |Z| = (C + 120)m.
	$$
	Therefore, we choose $m = \lfloor \nu n / (C + 120) \rfloor$. In order to make $|Z \cup Z'| = \lfloor \nu n \rfloor$ we further `pad' a perfect matching of size $\lfloor \nu n \rfloor - m (C + 120)$ to $B'$. As this is just a minor technicality  assume that $|Z \cup Z'| = \lfloor \nu n \rfloor$. Moreover, both $X \cup X'$ and $Y$ are linear in $m$ which implicitly defines constants $\beta_1, \beta_2$.

	We proceed in two steps. First, from Lemma \ref{lemma:factor} and $m_1(F_\Gamma^+) \le \Delta - 1/2$ (Lemma \ref{lemma:F_plus_m1}) we have that $G_1 = \Gnp$ w.h.p contains a family $\{F_z^+\}_{z \in Z}$ of vertex-disjoint copies of $F_\Gamma^+$. For each $z \in Z$ arbitrarily identify the neighbourhood of $N_{B'}(z)$ with the `outside' vertices of $F_z^+$ and let $F_z \subseteq F_z^+$ be the subgraph which corresponds to $F$. Furthermore, arbitrarily identify $X \cup Y$ with a subset of $V(G)$ (such that all the copies of $F_\Gamma^+$ are avoided). To summarise, this embeds the part of $A_{B'}$ corresponding to vertices in $Z$, $N_{B'}(Z)$ and $X \cup Y$ (refer to the beginning of this section for the description of $A_B$).

	It remains to embed copies of $F$ which correspond to $Z'$ and vertices which correspond to the remaining vertices from $X'$. For each path $P_{zw}$ (for $zw \in B$) from $z$ to $w$ let $w_{z} \in N_B'(z)$ denote the first vertex which comes after $z$. Then the remaining part of $A_{B'}$ corresponds to the vertices in $\bigcup_{zw \in B} V(P_{zw}) \setminus \{z, w, w_{z}\}$. In particular, for each path $P_{zw}$ we have only embedded one edge (the one corresponding $z w_z$), thus it remains to embed the part corresponding to the path of length $10$ from $w_{z}$ to $w$. Such a path corresponds to an $F_\Gamma$-path of length $10$ where the two endpoints are $w_{z}$ and $w$. Finally, from Lemma \ref{lemma:F_path_m} and Lemma \ref{lemma:connecting} we infer that $G_2 = \Gnp$ w.h.p contains the desired family of pairwise-disjoint $F_\Gamma$-paths.

	To summarise, $G = G_1 \cup G_2$ contains a family $\{F_z\}_{z \in Z \cup Z'}$ of copies of $F$ and a mapping $\gamma \colon X \cup X' \cup Y \to V(G)$ which avoid these copies such that $zv \in B'$ implies $\Gamma_z \subseteq N_{G}(\gamma(v))$, where $\Gamma_z \subseteq F_z$ corresponds to a subset $\Gamma \subseteq F$. As $B'$ is $Y$-robust we conclude that the auxiliary bipartite graph $\calB_G(\{\Gamma_z\}_{z \in Z \cup Z'}, \gamma(X \cup X') \cup \gamma(Y))$ is $\gamma(Y)$-robust, thus $G$ satisfies the property of the lemma.
\end{proof}

%!TEX root = spanning.tex
\section{Concluding remarks}

Our main contribution is Theorem \ref{thm:spanning} which shows that a typical $\Gnp$ is $\calH(n, \Delta)$-universal provided $p= \tilde \Omega(n^{-1/(\Delta-1/2)})$. Recall that for such a value of $p$, a typical $\Gnp$ does not have the property that every subset of $\Delta$ vertices has a common neighbourhood --- a feature which is very useful for a `vertex-by-vertex' embedding. We hope that the techniques introduced in this paper will be helpful in breaking such barriers in various embedding-type problems like the Bandwidth Theorem for random graphs \cite{allen2015local}. Moreover, we hope our techniques will be useful in making further progress towards an `optimal' (in a certain sense) sparse blow-up lemma \cite{allen2016blow}.

Our approach exploits the fact that every $\Delta$ regular graph can be made $(\Delta-1)$-degenerate by removing just a few vertices from each component. Therefore, it seems like the `natural' edge-probability we should have obtained is $p = \tilde \Omega(n^{-1/(\Delta - 1)})$ (in which case it would match the best known bound for the almost-spanning case \cite{conlon17almost}). Moreover, such a bound would be optimal for $\Delta = 3$ (see \eqref{eq:JKV bound}). Unfortunately, assumption in Lemma \ref{lemma:delta_S_embedding} does not allow for $p$ smaller than what we have (see Section \ref{sec:weaker_ordering} for a detailed discussion) and our finishing part seems too wasteful. Anyway, it is likely that $n^{-1/(\Delta-1)}$ is the best our method could potentially do and obtaining a universality result for edge probability smaller than $n^{-1/(\Delta - 1)}$, even in the almost-spanning case, remains a formidable challenge.

Finally, for $d$-degenerate graphs, it would be interesting to obtain a bound of order $\tilde \Omega(n^{-1/d})$ for the universality problem. The case $d = 1$ has been recently announced by Montgomery \cite{montgomery2014embedding} and it is wide open even for the case $d = 2$.

% \section{Concluding remarks}

% \begin{itemize}
% 	\item $n^{-1/(\Delta - 1)}$ is the best one can hope for using this approach. As a first step obtain a better result for almost-spanning universality;
% 	\item $d$-degenerate spanning for $n^{-1/2d - \eps}$.
% \end{itemize}

\bibliographystyle{abbrv}
\bibliography{spanning}

%\newpage
\appendix

%!TEX root = spanning.tex
\section{Proof of Lemma \ref{lemma:expansion_edges}}

% In the proof of Lemma \ref{lemma:expansion_edges} we use standard arguments based on Chernoff's and Janson's inequality.

% First, we state the Chernoff's bounds for the binomial distribution (see e.g. \cite{JLR}).
% \begin{theorem}[Chernoff's inequality]
% 	Let $\eps > 0$ be a constant and $X \sim \mathcal{B}(n,p)$ a binomial random variable with parameters $n$ and $p$. Then
% 	$$
% 		\Pr \left[ |X - \Exp[X]| \le \varepsilon \Exp[X] \right] \le e^{-\Omega_{\varepsilon}(\Exp[X])}
% 	$$
% \end{theorem}

We use of the following version of Janson's inequality. This particular
statement follows immediately from Theorems $8.1.1$ and $8.1.2$ in
\cite{alon2004probabilistic}.

\begin{theorem}[Janson's inequality] \label{thm:Janson}
Let $p \in (0, 1)$ and consider a family $\{ H_i \}_{i \in
\mathcal{I}}$ of subgraphs of the complete graph on the vertex set
$[n]$. Let $G = \Gnp$. For each $i \in \mathcal{I}$, let $X_i$
denote the indicator random variable for the event that $H_i \subseteq
G$ and, for each ordered pair $(i, j) \in \mathcal{I} \times
\mathcal{I}$ with $i \neq j$, write $H_i \sim H_j$ if $E(H_i)\cap
E(H_j)\neq \emptyset$. Then, for
\begin{align*}
X &= \sum_{i \in \mathcal{I}} X_i, \\
\mu &= \mathbb{E}[X] = \sum_{i \in \mathcal{I}} p^{e(H_i)}, \\
\delta &= \sum_{\substack{(i, j) \in \mathcal{I} \times \mathcal{I} \\ H_i \sim H_j}} \mathbb{E}[X_i X_j] = \sum_{\substack{(i, j) \in \mathcal{I} \times \mathcal{I} \\ H_i \sim H_j}} p^{e(H_i) + e(H_j) - e(H_i \cap H_j)}
\end{align*}
and any $0 < \gamma < 1$,
$$ \Pr[X < (1 - \gamma)\mu] \le e^{- \frac{\gamma^2 \mu^2}{2(\mu + \delta)}}. $$
\end{theorem}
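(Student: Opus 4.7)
}
The plan is a standard Chernoff-type argument: reduce the lower-tail probability for $X$ to an estimate on the Laplace transform $\psi(t) = \mathbb{E}[e^{-tX}]$ via Markov's inequality, establish the exponential moment bound using the FKG/Harris correlation inequality together with the dependency structure encoded by $\delta$, and then optimise in $t$. First, for any $t>0$ I would write
$$
	\Pr[X<(1-\gamma)\mu] \;=\; \Pr\bigl[e^{-tX} > e^{-t(1-\gamma)\mu}\bigr] \;\leq\; e^{t(1-\gamma)\mu}\,\psi(t).
$$
The combinatorial content of the statement then reduces to proving the following exponential moment bound, which is the heart of the argument:
$$
	\psi(t) \;\leq\; \exp\!\Bigl(-t\mu + \tfrac{t^{2}}{2}(\mu+\delta)\Bigr) \qquad \text{for every } t\in[0,1].
$$
Given this bound, substituting into the Markov estimate and choosing $t^{*}=\gamma\mu/(\mu+\delta)$ (which lies in $[0,1]$ since $\gamma\mu\leq \mu+\delta$) gives, after a one-line calculation, the claimed inequality $\Pr[X<(1-\gamma)\mu]\leq \exp(-\gamma^{2}\mu^{2}/(2(\mu+\delta)))$.

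To prove the exponential moment bound I would study $-\psi'(t)/\psi(t) = \mathbb{E}^{*}[X]$, where $\mathbb{E}^{*}$ denotes expectation under the tilted measure whose Radon--Nikodym derivative with respect to the original probability is $e^{-tX}/\psi(t)$. The goal is to show the lower bound
$$
	\mathbb{E}^{*}[X] \;\geq\; \mu - t(\mu+\delta), \qquad t\in[0,1],
$$
after which integrating $-(\log\psi)'(s)\geq \mu - s(\mu+\delta)$ from $0$ to $t$ gives the desired exponential estimate. To establish the lower bound on $\mathbb{E}^{*}[X]=\sum_{i}\mathbb{E}^{*}[X_{i}]$ I would exploit the fact that each $X_{i}$ is the indicator of the increasing event $A_{i}=\{H_{i}\subseteq G\}$, so that by Harris' inequality one can control the ratio $\mathbb{E}[X_{i}e^{-tX}]/\psi(t)$: for pairs $(i,j)$ with $H_{i}\not\sim H_{j}$ the events $A_{i}$ and $A_{j}$ are independent, and the tilting decouples cleanly and costs only a factor $e^{-t}\geq 1-t$ per indicator; for dependent pairs $H_{i}\sim H_{j}$ the correction is exactly $\mathbb{E}[X_{i}X_{j}]$, and summing these over all ordered dependent pairs gives the $\delta$ term.

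The main obstacle is Step~2: turning the heuristic ``tilting $e^{-tX}$ depresses each $X_{i}$ by roughly $e^{-t}$ plus a correlation correction'' into a rigorous lower bound on $\mathbb{E}^{*}[X_{i}]$. Two strategies suggest themselves. The first is the direct ODE approach just described, where the delicate point is carefully separating the independent and dependent contributions when differentiating $\psi$ term by term. The second is to telescope $\psi(t) = \prod_{i} \mathbb{E}[e^{-tX_{i}}\mid X_{1},\dots,X_{i-1}]$, bounding each conditional factor by combining Harris with a first-order inclusion--exclusion step $\Pr[A_{i}\mid \bar A_{1},\dots,\bar A_{i-1}] \geq \Pr[A_{i}] - \sum_{j<i:\,j\sim i}\Pr[A_{i}\cap A_{j}]$; the $\delta$ term then emerges from summing the pairwise corrections, and the exponential form follows from $\log(1-x)\leq -x + x^{2}/2$. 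Either route yields the exponential bound on $\psi$; the rest of the proof is the one-variable optimisation already described.
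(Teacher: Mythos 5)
The paper does not actually prove this statement: it is imported as a known result, with the proof deferred to Theorems 8.1.1 and 8.1.2 of Alon--Spencer, so there is no in-paper argument to measure your proposal against. What you outline is the standard exponential-moment proof of the lower-tail Janson inequality (Janson 1990; Theorem 2.14 in Janson--\L uczak--Ruci\'nski), and it is correct in outline; the Markov step and the optimisation at $t^{*}=\gamma\mu/(\mu+\delta)$ are routine. The only genuinely delicate point is the lower bound $\mathbb{E}^{*}[X_i]\ge \mathbb{E}[X_i]-t\sum_{j:\,j=i\text{ or }j\sim i}\mathbb{E}[X_iX_j]$. Writing $X=X_i+Y_i+Z_i$ with $Y_i=\sum_{j\neq i,\,j\sim i}X_j$ and $Z_i=\sum_{j\neq i,\,j\not\sim i}X_j$, one has $\mathbb{E}[X_ie^{-tX}]\ge \mathbb{E}[X_ie^{-tZ_i}]-t\,\mathbb{E}[X_i(X_i+Y_i)e^{-tZ_i}]$ by $e^{-u}\ge 1-u$; the first term factors exactly because $X_i$ and $Z_i$ depend on disjoint sets of edge indicators, but for the second term you must apply Harris ($X_i(X_i+Y_i)$ increasing, $e^{-tZ_i}$ decreasing) so that the \emph{same} factor $\mathbb{E}[e^{-tZ_i}]\ge \mathbb{E}[e^{-tX}]=\psi(t)$ can be extracted from both terms before dividing by $\psi(t)$. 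If one instead crudely bounds the correction by $t\,\mathbb{E}[X_i(X_i+Y_i)]$ alone, dividing by $\psi(t)\le 1$ inflates it by $1/\psi(t)$ and the differential inequality no longer integrates to the claimed bound --- this is exactly the spot your sketch calls ``the main obstacle,'' and Harris, applied there, resolves it (plus the trivial remark that when the resulting lower bound is negative it holds anyway since $X_i\ge 0$). For completeness: the route the paper's citation points to is closer to your second strategy --- the telescoping/conditioning proof of $\Pr[X=0]\le e^{-\mu+\delta/2}$ followed by a random-sparsification step to reach the general lower tail --- so both of the strategies you name are legitimate and both appear in the literature.
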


For convenience of the reader, we restate Lemma \ref{lemma:expansion_edges}.

\begin{customlemma}{\ref{lemma:expansion_edges}}
Let $\Delta \ge 2$ be an integer and $\alpha \in \mathbb{R}$ a positive constant. Given a subset $W \subseteq [n]$ of size $|W| \ge \alpha n$, if
$$
	p \ge \left(n^{-1}\log^3 n \right)^{1/(\Delta - 1/2)}
$$
then $G = \Gnp$ w.h.p has the following property: For every subset $W' \subseteq [n] \setminus W$ and every family $\{(A_i, B_i)\}_{i \in [t]}$ of pairs of subsets $A_i, B_i \subseteq [n] \setminus (W \cup W')$ of size $|A_i| = |B_i| = \Delta - 1$ such that
	\begin{itemize}
		\item $2 t \le |W'|$, and
		\item no vertex of $G$ appears in more than $\Delta$ pairs,
	\end{itemize}
there exists a family of vertex-disjoint edges $\{x_i y_i \in G[W \cup W']\}_{i \in [t]}$ such that $A_i \subseteq N_G(x_i)$ and $B_i \subseteq N_G(y_i)$ for every $i \in [t]$.
\end{customlemma}
\begin{proof}
Consider an arbitrary partition of $W$ into $3\Delta^3 + 1$ subsets denoted by $W_0, W_1, \ldots, W_{3\Delta^3}$ such that $|W_1| = \ldots = |W_{\Delta^3}| = \alpha n / 6 \Delta^3$ and, consequently, $|W_0| \ge \alpha n / 2$. Then $G = \Gnp$ w.h.p satisfies the property of Lemma \ref{lemma:connecting} for every $W_i$ (as $W$) and every graph $F$ consisting of two sets $A, B$ (which may overlap) of size at most $\Delta$  and two adjacent vertices $a, b$ such that $a$ is connected to every vertex in $A$ and $b$ to every vertex in $B$, and $\mathbf{x} = A \cup B$. Moreover, suppose that $G$ has the property described in the following claim:

\begin{claim} \label{claim:matching_fill}
For $p$ as stated in the lemma, $G = \Gnp$ w.h.p has the following property: For every subset $U \subseteq V(G)$ of size $|U| \ge \alpha n / 4$ and every family $\{(A_i, B_i)\}_{i \in [t]}$ of pairs of sets $A_i, B_i \subseteq [n] \setminus U$ of size $|A_i| = |B_i| = \Delta - 1$ such that
\begin{enumerate}[(i)]		
	\item $t \ge n / \log n$, and
	\item $(A_i \cup B_i) \cap (A_{i'} \cup B_{i'}) = \emptyset$ for all $i \neq i'$,
\end{enumerate}
there exists $i \in [t]$ and an edge $xy \in G[U]$ such that $x \in N_G(A_i)$ and $y \in N_G(B_i)$.
\end{claim}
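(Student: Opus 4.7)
The plan is to prove Claim \ref{claim:matching_fill} by applying Janson's inequality for each fixed $U$ and family, obtaining a failure probability small enough to be union-bounded over all admissible choices. Fix $U$ with $|U| \ge \alpha n/4$ and $\{(A_i, B_i)\}_{i \in [t]}$ satisfying (i), (ii). For each triple $(i, x, y)$ with $i \in [t]$ and $x \neq y$ in $U$, let $E_{i,x,y}$ be the event that $xy \in G$, $A_i \subseteq N_G(x)$, and $B_i \subseteq N_G(y)$, and set $X = \sum_{(i,x,y)} \mathbf{1}_{E_{i,x,y}}$; an edge of the required kind exists iff $X \ge 1$. Each event involves $2(\Delta-1)+1 = 2\Delta - 1$ distinct prescribed edges (distinct because $A_i, B_i \subseteq [n] \setminus U$, $x,y \in U$, and $A_i \cap B_i = \emptyset$), so
$$
\mu := \Exp[X] = t \cdot |U|(|U|-1) \cdot p^{2\Delta-1} \ge \tfrac{\alpha^2}{16} \cdot \tfrac{n}{\log n} \cdot n^2 \cdot \tfrac{\log^6 n}{n^2} = \Omega(n \log^5 n).
$$

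Next I would verify the Janson correction $\delta = \sum_{E \sim E'} \Pr[E \cap E']$ is $o(\mu)$ via a case analysis on edge-overlap. The crucial input is condition (ii): the disjointness of $A_i \cup B_i$ across $i$ forces every ``side edge'' $xa$ (with $a \in A_i$) or $yb$ (with $b \in B_i$) to be uniquely determined by $i$ and its $U$-endpoint, since any coincidence would force a vertex of $U$ to equal a vertex outside $U$. Consequently, events with $i \neq i'$ can only overlap in the middle edge, forcing $\{x,y\} = \{x',y'\}$ and contributing $O(t^2 |U|^2 p^{4\Delta-3}) = \mu \cdot O(t p^{2\Delta-2})$. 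Within a single $i$, enumeration on the overlap pattern of $\{x,y\}$ with $\{x',y'\}$ leaves only three genuine subcases: $x = x'$, $y \neq y'$ (shared $\Delta-1$ edges on the $A_i$-side), symmetrically $y = y'$, and $(x',y') = (y,x)$ (shared middle edge only). These contribute $O(t|U|^3 p^{3\Delta-1}) = \mu \cdot O(|U| p^\Delta)$ and $O(t |U|^2 p^{4\Delta-3}) = \mu \cdot O(p^{2\Delta-2})$ respectively. With $p = (n^{-1}\log^3 n)^{2/(2\Delta-1)}$, a short computation gives $|U| p^\Delta = O(n^{-1/(2\Delta-1)}\, \mathrm{polylog}(n))$ and $t p^{2\Delta-2} = O(n^{(3-2\Delta)/(2\Delta-1)} \mathrm{polylog}(n))$, both $o(1)$ for every $\Delta \ge 2$, so $\delta = o(\mu)$.

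Janson's inequality (Theorem \ref{thm:Janson}) with $\gamma = 1/2$ then yields $\Pr[X = 0] \le \exp(-\Omega(\mu)) \le \exp(-c_1 n \log^5 n)$. The final union bound consists of at most $2^n$ choices for $U$ and, since (ii) forces any admissible family to label at most $2n(\Delta-1)$ vertices of $[n]$, at most $n^{2n(\Delta-1)} = 2^{O(n \log n)}$ admissible families. Their product is $\exp(O(n \log n) - c_1 n \log^5 n) = o(1)$, giving the claim. The main technical obstacle is the $\delta$-estimate: it is conceptually routine but one must carefully exploit disjointness (ii) to rule out cross-$i$ side-edge overlaps; without (ii) the cross-$i$ contribution would dominate $\mu$ and Janson would fail. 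The exponent $\Delta - 1/2$ is precisely the break-even point where $\mu = \mathrm{polylog}(n) \cdot n$, matching the shape of the graph encoding a single required edge (two anchors with $\Delta-1$ prescribed neighbours each, joined by one edge, giving $2\Delta-1$ edges over $2$ free vertices).
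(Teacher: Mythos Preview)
Your approach is essentially the same as the paper's: apply Janson's inequality to the family of ``witness'' subgraphs $H_{i,x,y}$ and union-bound over all $U$ and all admissible families. The structure and the $\mu$, $\delta$ computations match closely, and your union bound is correctly calibrated.

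There is one inaccuracy worth flagging. You assert $A_i \cap B_i = \emptyset$, but this is not part of the hypotheses of the claim (only cross-$i$ disjointness is assumed). The edge count $2\Delta-1$ is unaffected --- the edges $\{xa: a\in A_i\}$ and $\{yb: b\in B_i\}$ are distinct simply because $x\neq y$ --- but your treatment of the sub-case $(x',y')=(y,x)$ is not: if $k=|A_i\cap B_i|>0$ then $H_{i,x,y}$ and $H_{i,y,x}$ share $1+2k$ edges, not just the middle edge. In the extreme $A_i=B_i$ this sub-case alone contributes $\Theta(\mu)$ to $\delta$, so your claim $\delta=o(\mu)$ is false in general. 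Fortunately this does not break the argument: $\delta=O(\mu)$ still gives $\Pr[X=0]\le e^{-\Omega(\mu)}$ via Janson, and your union bound goes through unchanged. The paper sidesteps the issue entirely by first splitting $U=V_1\cup V_2$ and only allowing $x\in V_1$, $y\in V_2$; then the configuration $(x',y')=(y,x)$ simply does not arise, and one genuinely gets $\delta=o(\mu^2/(t\log n))$, aiming for failure probability $e^{-3\Delta t\log n}$ matched against $n^{2\Delta t}$ families of size $t$. Either bookkeeping works.
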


We first show that such $G$ satisfies the property of Lemma \ref{lemma:expansion_edges} and then prove Claim \ref{claim:matching_fill}.

Let $\calP = \{(A_i, B_i)\}_{i \in [t]}$ be a given family of pairs of subsets which satisfy the condition of the lemma. We first partition $[t]$ into subsets $I_1, \ldots, I_{3\Delta^3}$ such that the following holds for every $k \in [3\Delta^3]$ and distinct $i, j \in I_k$:
\begin{enumerate}[(a)]
	\item $|A_i \cap B_i| = |A_j \cap B_j|$, and
	\item $(A_i \cup B_i) \cap (A_j \cup B_j) = \emptyset$.
\end{enumerate}
The existence of such partition can be seen as follows: consider a graph $H$ on the vertex set $[t]$ such that two vertices $i,j$ are adjacent iff $(A_i \cup B_i) \cap (A_j \cup B_j) \neq \emptyset$. As no vertex appears in more than $\Delta$ pairs, we conclude that $H$ has maximum degree at most $2\Delta \cdot (\Delta - 1)$. Therefore, the chromatic number of $H$ is at most $2\Delta^2$ thus there exists a partition of $[t]$ into at most $2\Delta^2$ independent sets. Note if $i, j$ are not adjacent in $H$ then the corresponding pairs satisfy the property (b). Furthermore, partition each independent set into at most $\Delta + 1$ sets depending on the size of of $A_i \cap B_i$. This gives the desired partition of $[t]$.

Let $W_0' := W_0 \cup W'$. For most of the pairs from $\calP$ we find the corresponding edge in $G[W_0']$ and the remaining ones are taken care of using $G[W_i]$. To this end, let $I' \subseteq [t]$ be a maximal subset such that $G[W_0']$ contains a desired family of edges for $\calP' = \{(A_i, B_i)\}_{i \in I'}$. Let us denote with $U \subseteq W_0'$ the subset of `unused' vertices (i.e. vertices which are not part of such edges) and note that $|U| \ge |W_0| \ge \alpha n / 2$. This comes from the fact that $W'$ itself is large enough to accommodate all such edges. We claim that $|I_k \setminus I'| \le n / \log n$ for each $k \in [3\Delta^3]$: if this is not the case then by Claim \ref{claim:matching_fill} there exists some $i \in I_k \setminus I'$ and an edge $xy \in G[U]$ such that $x \in N_G(A_i)$ and $y \in N_G(B_i)$. However, this contradicts the maximality of $I'$. Therefore, for each $k \in [3\Delta^3]$ we have that $I_k' := I_k \setminus I'$ is of size at most $n / \log n$. From the assumption that $G$ satisfies the property of Lemma \ref{lemma:connecting} for $W_k$ (as $W$) we conclude that $G[W_k]$ contains a family of desired edges for $\calP_k := \{(A_i, B_i)\}_{i \in I_k'}$. As $W_k$'s are chosen to be disjoint, this gives a desired family of edges for $\calP$.

It remains to prove Claim \ref{claim:matching_fill}.

\begin{proof}[Proof of Claim \ref{claim:matching_fill}]
We say that a family $\calP = \{(A_i, B_i)\}_{i \in [t]}$ is \emph{valid} if it satisfies (i) and (ii). Given a subset $U$ of size $|U| \ge \alpha n / 4$ and a valid family $\calP = \{(A_i, B_i)\}_{i \in [t]}$ for some $t \ge n / \log n$, let $\calE(\calP, U)$ denote the event ``there is no $i \in [t]$ and an edge $xy \in G[U]$ such that $A_i \subseteq N_G(x)$ and $B_i \subseteq N_G(y)$". The claim then states that no $\calE(\calP, U)$ happens. To prove this it suffices to show $\Pr[\calE(\calP, U)] < e^{-3 \Delta t \log n}$: there are at most $2^n$ subsets $U$ of size $u \ge \alpha n / 4$ and at most $n^{2\Delta t}$ valid families $\calP$ of size $t$, thus by the union bound we get
\begin{align*}
	\Pr[ \exists \text{ valid } \calP, U \colon \calE(\calP, U)] &\leq \sum_{\calP, U} \Pr[\calE(\calP, U)] 
\le \sum_{\calP, U} e^{- 3\Delta |\calP| \log n} \\
	&\le \sum_{\substack{u \ge \alpha n / 4 \\ t \ge n / \log n}} 2^n e^{2\Delta t \log n} e^{-3 \Delta t \log n} = o(1).
\end{align*}

Consider some valid family $\calP = \{(A_i, B_i)\}_{i \in [t]}$ and a subset $U$. We show $\Pr[\calE(\calP, U)] < e^{-3 \Delta t \log n}$ using Janson's inequlaity (Theorem \ref{thm:Janson}) applied on a certain family of subgraphs of the complete graph $K_n$, which we define next. First, let $U = V_1 \cup V_2$ be an arbitrary partition with $|V_1| = |V_2|$. For each $x \in V_1$, $y \in V_2$ and $i \in [t]$, let us define $H_{x,y,i} \in K_n$ to be the subgraph of $K_n$ consisting of the vertex set $U(x,y,i):=\{x, y\} \cup A_i \cup B_i$ and the edge set $E(x,y,i):=\{vw\}\cup\{xa \colon a \in A_i\}\cup\{yb\colon b \in B_i\}$. Let $\calI := V_1 \times V_2 \times [t]$ and recall that $|A_i|=|B_i|=\Delta - 1$. Using the notation from Theorem \ref{thm:Janson} we obtain
$$
	\mu = \sum_{x, y, i} p^{2(\Delta - 1) + 1} = (|U|/2)^2 t p^{2\Delta - 1} \ge t \left( \frac{\alpha n}{8} p^{\Delta - 1/2} \right)^2 \gg t \log n.
$$

Next, we show $\delta = o(\mu^2 / (t \log n))$. Observe that this suffices to apply Janson's inequality with, say, $\gamma = 1/2$, to conclude the probability that none of the $H_{x,y,i}$'s appear in $G$ is at most $e^{-3 \Delta t \log n}$, with room to spare. Recall the definition of $\delta$,
$$
	\delta = \sum_{H_{x,y,i} \sim H_{x',y', i'}} p^{2 \cdot (2(\Delta - 1) + 1) - e(H_{x,y,i} \cap H_{x',y',i'})},
$$
where $H_{x,y,i} \sim H_{x',y',i'}$ if the two subgraphs have a common edge. Because of the property (ii), $x,x' \in V_1$ and $y,y' \in V_2$, for each two such subgraphs we either have (a) $i = i'$ and either $x = x'$ or $y = y'$ (but not both), or (b) $x = x'$, $y = y'$ and $i \neq i'$. Let us denote with $\delta_a$ and $\delta_b$ the contribution of pairs which satisfy (a) and (b), respectively. We first estimate $\delta_a$. Note that for each pair of subgraphs $H_{x,y,i}$ and $H_{x,y',i}$ we have $e(H_{x,y,i} \cap H_{x, y', i}) = \Delta - 1$, thus
$$
	\delta_a \le t (|U|/2)^3 \cdot p^{2(2\Delta - 1) - (\Delta - 1)} \le \frac{2 \mu^2}{t|U| p^{\Delta - 1}} \ll \mu^2 / (t \log n),
$$
with room to spare. Here we used $t (|U|/2)^3$ as an upper bound on the number of choices of 4-tuples $(x,y,i,x')$ and $(x,y,i,y')$. Similarly, from $e(H_{x,y,i} \cap H_{x,y,i'}) = 1$ we obtain
$$
	\delta_b \le (|U|/2)^2 t^2 \cdot p^{2(2\Delta - 1) - 1} =  \frac{4 \mu^2}{|U|^2 p} \ll \mu^2 / (t \log n).
$$
To summarise, we showed $\delta = o(\mu^2 / (t \log n))$ which completes the proof.
\end{proof}
\end{proof}

\section{Proofs of density estimates from Section \ref{sec:delta_absorber}}

For the definition of $F_\Gamma^+$ graphs and $F_\Gamma$-paths we refer the reader to Section \ref{sec:delta_absorber}.

\begin{customlemma}{\ref{lemma:F_plus_m1}}
	Let $F$ be a graph with maximum degree $\Delta$ and $\Gamma \subseteq V(F)$ a subset of size $|\Gamma| \le \Delta$, for some integer $\Delta \ge 3$. Then $m_1(F_\Gamma^+) \le \Delta - 1/2$.
\end{customlemma}
\begin{proof}	
	Given a graph $H$ with at least 2 vertices, let $d_1(H) := e(H)/(v(H)-1)$. Then $m_1(F_\Gamma^+) = \max d_1(H)$, where the maximum is taken over all subgraphs $H \subseteq F_\Gamma^+$ with at least 3 vertices.

	Consider a subgraph $H \subseteq F_\Gamma^+$ and let $a = |V(H) \cap V(F)|$ and $b = |V(H) \cap \{w_1,w_2,w_3\}|$. We first deal with the case $a \in \{1,2,3\}$. If $a = 1$ then $H$ is a star thus $e(H) = v(H) - 1$ and $d_1(H) = 1$. Otherwise, if $a = 2$ then $e(H) \le 1 + 2b$ and
	$$
		d_1(H) \le \frac{1 + 2b}{2 + b - 1} \le 2 < \Delta - 1/2.
	$$
	Finally, if $a = 3$ then $e(H) \le 3 + 3b$ and
	$$
		d_1(H) \le \frac{3 + 3b}{3 + b - 1} < 5/2 \le \Delta - 1/2,
	$$
	for every valid $b$ (that is, for $b \in \{0, 1, 2, 3\}$).

	Let us now assume $a \ge 4$. Then
	$$
		\frac{e(H)}{v(H) - 1} \le \frac{a \Delta / 2 + b\Delta}{a + b - 1}.
	$$
	We aim to show that the right hand side can be upper bounded by $\Delta - 1/2$. Equivalently, we aim to show
	$$
		(a/2 + b)\Delta \le \Delta(a + b - 1) - (a + b - 1)/2,
	$$
	which after rearranging  corresponds to $\Delta(a/2 - 1) \ge (a+b-1)/2$. As $b \le 3$ it suffices to show $\Delta(a/2 - 1) \ge (a + 2)/2$ which holds trivially for every $a\geq 4$. This completes the proof.
\end{proof}

\begin{customlemma}{\ref{lemma:F_path_m}}
	Let $F$ be a graph with maximum degree $\Delta$ and $\Gamma \subseteq V(F)$ a subset of size $|\Gamma| \le \Delta$ such that $\deg_F(v) \le \Delta - 1$ for every $v \in \Gamma$, for some $\Delta \ge 3$. If $H$ is an $F_\Gamma$-path of length $10$ then $m(H, \mathbf{x}) \le \Delta - 1/2$, where $\mathbf{x} = (w, w')$ and $w$ and $w'$ are endpoints of $H$ (see Figure \ref{fig:F_G_path}).
\end{customlemma}
\begin{proof}
	We first argue that $d_1(H') \le \Delta - 1/2$ (where $d_1(\cdot)$ is as defined in the proof of Lemma \ref{lemma:F_plus_m1}) for every subset $H' \subseteq H$ with at least two vertices. This verifies
	$$
		\frac{e(H')}{v(H') - \max\{1, |V(H') \cap \{w, w'\}|\}} \le \Delta - 1/2
	$$
	for all subgraphs $H' \subseteq H$ with $V(H) \cap \{w, w'\} = \emptyset$.

	To this end, note that if $H', H'' \subseteq H$ have only one vertex in common and no edges in between, that is, $V(H') \cap V(H'') = \{h\}$ and $H$ contain no edge between $V(H') \setminus \{h\}$ and $V(H'') \setminus \{h\}$, then $d_1(H' \cup H'') \le \max\{d_1(H'), d_1(H'')\}$. Now $d_1(H') \le \Delta - 1/2$ can be seen as follows: set $H' = \bigcup_{i \in [5]} H' \cap F_i^+$, where $F_i^+$ is the subgraph of $H$ that corresponds to the $i$-th copy of $F$ on the $F_\Gamma$-path together with the two neighbouring vertices (see Figure \ref{fig:subgraph_F_i}). 

	\begin{figure}[h!]
		\centering
		\includegraphics[scale=0.6]{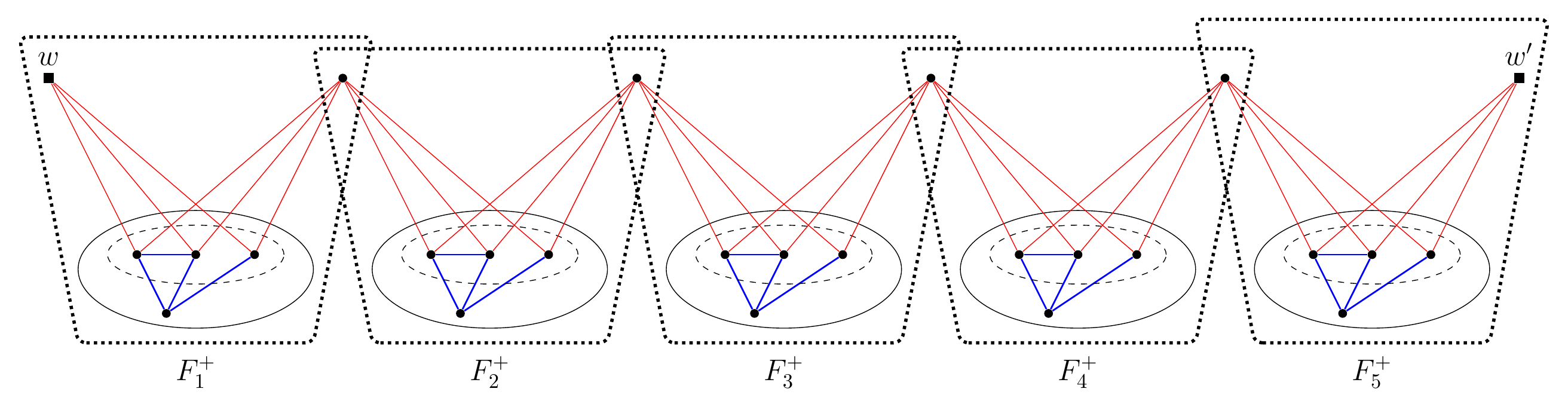}
		\caption{Subgraphs $F_i^+$.}
		\label{fig:subgraph_F_i}
	\end{figure} 

	\noindent
	As $F_i^+ \subseteq F_\Gamma^+$, from Lemma \ref{lemma:F_plus_m1} we have $d_1(F_i^+) \le \Delta - 1/2$. Finally, as every two consecutive $F_i^+$ and $F_{i+1}^+$ intersect on exactly one vertex and otherwise have no edges in between, the previous observation implies $d_1(H') \le \Delta - 1/2$.

	Let us now consider a subgraph $H' \subseteq H$ which contains both $w$ and $w'$. If $H'$ is not connected then $H' = H_1 \cup H_2$ where $H_1$ and $H_2$ are disjoint subgraphs and there are no edges between them. Therefore,
	$$
		\frac{e(H')}{v(H') - 2} = \frac{e(H_1) + e(H_2)}{(v(H_1) - 1) + (v(H_2) - 1)} \le \Delta - 1/2,
	$$
	where the last inequality follows from $d_1(H_1), d_1(H_2) \le \Delta - 1/2$ (shown in the previous case). 

	Finally, it remains to consider the case where $H'$ is connected and contains $\{w,w'\}$. Note that such $H'$ necessarily contains all $6$ `outside' vertices as otherwise it is not connected. We estimate the number of edges of such $H'$ as follows: let $a$ denote the number of vertices of $H'$ which belong to a subset of some copy of $F$ corresponding to $\Gamma$ (i.e. the number of vertices which belong to some dashed subset of $F$ in Figure \ref{fig:F_G_path}) and let $b$ denote all the other vertices of $H'$ which belong to some copy of $F$. Then $H'$ contains at most $\frac{a(\Delta - 1) + b \Delta}{2}$ blue edges (i.e. edges within a copy of some $F$) and at most $2a$ red edges (edges incident to outside vertices). From $v(H') = a + b + 6$ we obtain
	\begin{equation} \label{eq:Hprim}
		e(H') \le \frac{a (\Delta - 1) + b \Delta}{2} + 2a \le 
		\frac{(v(H') - 6)\Delta - a}{2} + 2a = 
		\frac{(v(H') - 6)\Delta + 3a}{2} \le \frac{(v(H') - 6)(\Delta + 3)}{2}. 
	\end{equation}
	On the other hand, we need to show that this is at most
	$$
		(v(H') - 2)(\Delta - 1/2) = \frac{(v(H') - 2)(2\Delta - 1)}{2},
	$$
	which clearly follows from \eqref{eq:Hprim} if $\Delta \ge 4$. Moreover, for $\Delta = 3$ this also holds provided $v(H') \le 26$. Thus it remains to check the case $\Delta = 3$ and $v(H') > 26$. In this case the previously used estimate $2a \le (v(H') - 6)2$ on the number of red edges in $H'$ is too generous as there are only $24$ red edges in total. As $H'$ contains at most $(v(H') - 6)3/2$ blue edges, this gives
	$$
		e(H') \le \frac{(v(H') - 6)3}{2} + 24
	$$
	which is easily seen to be at most $(v(H') - 2) 5 / 2$ for $v(H') > 26$. This concludes the proof.
\end{proof}

\end{document}